\newfont{\Bb}{msbm10 scaled\magstep0}
\newtheorem{theorem}{Theorem}[section]
\newtheorem{proposition}{Proposition}[section]
\newtheorem{remark}{Remark}[section]
\newenvironment{proof}[1][\textit{Proof}]{\begin{trivlist}
\item[\hskip \labelsep {\bfseries #1}]}{\end{trivlist}}
\newcommand{\qed}{\nobreak \ifvmode \relax \else
      \ifdim\lastskip<1.5em \hskip-\lastskip
\vskip-1.5em
   \hskip38em plus0em minus0.5em \fi \nobreak
      \vrule height0.4em width0.3em depth0.25em\fi}
\title{On Error Estimation for Reduced-order Modeling of Linear Non-parametric and Parametric Systems}\author{Lihong Feng \thanks{Max Planck Institute for Dynamics of Complex Technical Systems, Sandtorstrasse 1, D-39106 Magdeburg, Germany~{\tt feng@mpi-magdeburg.mpg.de}}
\and Peter Benner\thanks{Max Planck Institute for Dynamics of Complex Technical Systems, Magdeburg, Germany and Fakultät für Mathematik, Otto-von-Guericke Universität Magdeburg, Germany.~{\tt benner@mpi-magdeburg.mpg.de}}
}
\begin{document}

%\subjclass{37M05, 65P99, 65L70, 65L80}

%\keywords{Model order reduction; error estimation.}

\maketitle

\begin{abstract}
Motivated by a recently proposed error estimator for the transfer function of the 
reduced-order model of a given linear dynamical system, we further develop more theoretical results in this work. Furthermore, 
we propose several variants of the error estimator, and compare those variants with the existing ones both theoretically and numerically. It has been shown that some of the proposed error estimators perform better than or equally well as the existing ones. All the error estimators considered can be easily extended to estimate 
output error of reduced-order modeling for steady linear parametric systems. 
 
\end{abstract}

\section{Introduction}

Many model order reduction (MOR) methods have been proposed during the last decades. 
For many problems, especially parametric time-dependent problems, efficient error estimation of the reduced-order model (ROM) is still critical. 
%Although there exists an error bound for the balanced truncation method, this is not applicable to other MOR methods. Many error bounds for balancing related and moment-matching methods~\cite{morOdaCP98} are only applicable to non-parametric systems. Moreover, some of them face high computational complexity~\cite{morWolPL11}, while others are heuristic~\cite{morFenKB15}.

%In recent years, numerous MOR methods for linear parametric systems have been developed, for example, the Krylov based (multi-moment matching) methods~\cite{morDanSCetal04, morFenRK05, morFenB14}, the reduced basis methods~\cite{morHesGB16}, etc. 
%Many error estimators developed for the reduced basis methods estimate the error for the state vector~\cite{morRozHP08}, but are not directly applicable to the output error or transfer function error. Furthermore, those error estimators are developed based on the bilinear weak forms of the PDE models~\cite{morHesGB16, morRozHP08}, which limit their applicability mostly to only finite-element discretized problems, though extensions to finite volume method exist~\cite{morHaaO08b}. 

%Recently, an a-posteriori error bound for the transfer function of the ROM has been proposed in~\cite{morFenAB17}, being applicable to 
%ROMs constructed by different MOR methods. It is independent
%of the discretization method (finite difference, finite element, finite volume) applied to the original PDEs and can be directly used in the discretized vector space $\mathbb{C}^n$.

It is well-known that many a-posteriori error bounds/estimators~\cite{morRozHP08, morYan14, morZhaFLetal15, morHesGB16, morFenAB17} need compute the inf-sup constant, which appears as the denominator of the error estimator. In the numerically discretized space, the inf-sup constant corresponds to the smallest singular value of a large matrix. 
For many models from, e.g., circuit simulation, MEMS simulation, the smallest singular value can be zero at some samples of the parameter due to resonances~\cite{morHesGB15}, making the error bound unavailable at those samples. Besides, computing  the smallest singular value at many samples of the parameter is time-consuming for large-scale problems. Although some algorithms are proposed to compute a lower bound of the inf-sup constant~\cite{ morHuyetal07}, they are found to be inefficient for many problems~\cite{morSemZP18}. The error bound often overestimates the true error, especially for those systems whose smallest singular values are close to zero at many parameter values. 

In recent work~\cite{morFenB19b}, new estimators for the transfer function error, as well 
as for the output error of MOR for steady parametric systems, is proposed. The proposed error estimator avoids computing the singular values of any matrix, and depends mainly on the ROM. It is applicable to any system whose ROMs  are computed using a projection based MOR method. It is illustrated by the numerical results that the error estimator is much sharper than the error bound in~\cite{morFenAB17} for those systems with small inf-sup constants. Using the proposed error estimation, the adaptive greedy algorithm converges much faster than using the error bound from~\cite{morFenAB17}. 

Error estimation based on randomized residual for parametric steady systems, is proposed in~\cite{morSemZP18} . The output error estimation proposed there is also free of computing the inf-sup constant and can be used to estimate the transfer function error in frequency domain. We will show in this work that the error estimator in~\cite{morSemZP18} more likely underestimate the true error as compared with the error estimators in~\cite{morFenB19b} and the proposed error estimators.

Another error estimation which is independent of the inf-sup constant is proposed in~\cite{morHaiOetal18}. This error estimation is used to estimate the error of the state (solution vector). It simply uses the error between two approximate solutions computed from two ROMs divided by a saturation constant as the error estimator. As for estimation of the transfer function error or output error, trivially multiplying the output matrix norm $\|C\|$ with the error estimator could also estimate the output error, but may lead to slow error decay if $\|C\|$ is large. Moreover, a saturation constant needs to be estimated for the error estimator in~\cite{morHaiOetal18}, which needs extra computations and may cause inefficiency of the error estimator if computed without sufficient accuracy. 

The error bound in~\cite{morSchWH18} is proposed for nonlinear systems and also requires the computation of the inf-sup constant or its lower bound. Numerical issues concerning computing these quantities remain. Moreover, some assumptions on the magnitude of the inf-sup constant is needed in order to derive the error estimator. For problems whose inf-sup constants are close to zero, e.g. $O(10^{-12})$, as for the examples presented in this work, the error bound might not be tight anymore. From Lemma 2 in~\cite{morSchWH18}, it is not difficult to check that for linear problems, the error bound in~\cite{morSchWH18} is an upper bound of the error estimator $\Delta_1^{pr}$ proposed in this work when the output matrix satisfies $C(\mu)=I$, the identity matrix. The residual system needed for computing $\Delta_1^{pr}$ is called {\it error equation} in ~\cite{morSchWH18}, where a ROM of the error equation needs also to be constructed. This ROM is constructed by running a {\it separate} greedy algorithm at {\it each} iteration of the main greedy algorithm. In contrast, we simultaneously construct the ROM of the residual system (error equation) and that of the original system in one greedy algorithm.

In this work, we further explore the property of the error estimator in~\cite{morFenB19b} and propose some variants of it. Sensitivity analyses are presented to show that the proposed  error estimators may behave as error bounds when influenced with a small constant. The proposed variants are theoretically and numerically analyzed, and compared with the existing ones. Furthermore, the more general MOR framework based on Petrov-Galerkin projection is used to analyze the error estimators and to explore the corresponding theoretical and numerical properties.
In the next section, we first review the error estimator in~\cite{morFenB19b} and develop more theoretical results. Variants of the error estimator and corresponding theoretical analyses are provided in Section~\ref{sec:variants}. Section~\ref{sec:comp} theoretically compares the new error estimators with the existing ones. Section~\ref{sec:alg} provides greedy algorithms for constructing the ROMs based on the error estimators. Numerical results of all the error estimators for various problems are presented and compared in Section~\ref{sec:numerics}. Conclusions are drawn in the end.  

\section{Preliminaries and Notation}

Consider linear systems
\begin{equation}
\label{eq:FOM}
\begin{array}{rcl}
E(\mu) \frac{d}{dt}x(t, \mu)&=&A( \mu) x(t, \mu)+B( \mu) u(t), \\
y(t,\mu)&=&C(\mu)x(t, \mu)
\end{array}
\end{equation}
with or without parameters.
Here $x(t, \mu) \in \mathbb R^n$ is the state vector, $n$ is often referred to as the {\it order} of the system. The vector $ \mu:=(\mu_1,\ldots, \mu_m) \in \mathbb R^{1\times m}$ includes all of the geometrical and physical parameters. The system matrices $E( \mu), A(\mu) \in \mathbb R^{n\times n}$, and $B( \mu) \in \mathbb R^{n \times n_I}$, $C(\mu) \in \mathbb R^{n_O \times n}$ may depend on the parameters. 

The reduced-order model (ROM) of the original system can be written as
\begin{equation}
\label{eq:ROM}
\begin{array}{rcl}
\hat{E}(\mu) \frac{d}{dt}z(t,\mu)&=&\hat{A}(\mu)z(t,\mu)+\hat{B}(\mu)u(t), \\
\hat{y}(t, \mu)& = &\hat{C}( \mu)z(t,\mu),
\end{array}
\end{equation}
where $\hat{E}(\mu)=W^TE(\mu)V\in \mathbb R^{r \times r}$, $\hat {A}(\mu) =W^TA(\mu)V\in  \mathbb R^{r \times r}$, $\hat{B}( \mu)=W^TB(\mu)\in  \mathbb R^{r \times n_I}$, $\hat{C}(\mu)=C(\mu)V\in  \mathbb R^{n_O \times r}$, and $z(t,\mu) \in \mathbb R^r$ with $r \ll n$. 
Then $x(t,\mu)$ can be recovered by $x(t, \mu) \approx Vz(t,\mu)$. 
%For simplicity, we assume Galerkin-projection, i.e. $W=V \in \mathbb R^{n\times r}$, though the error estimator
%immediately applies to ROMs obtained with Petrov-Galerkin projection as well, i.e. $W\neq V$.

The transfer function of the original system is defined as
\begin{eqnarray}
\label{transfun}
H(\tilde \mu)=C(\mu)Q(\tilde \mu)^{-1}B(\mu),
\end{eqnarray}
where $Q(\tilde \mu)=sE(\tilde \mu)-A(\mu)$.
Similarly, the transfer function of the ROM is
$$\hat H(\tilde \mu)=\hat C(\mu)\hat Q(\tilde \mu)^{-1}\hat B(\mu),$$
where  $\hat Q(\tilde \mu)=s\hat E(\tilde \mu)-\hat A(\mu)$.
Here, $s\in \mathbb C$ is the Laplace variable in frequency domain, and $\tilde \mu:=(\mu, s)$. 
In the following, we define a primal system and a dual system, whose solutions depict the right part  $Q(\tilde \mu)^{-1}B(\mu)$ and the left part $C(\mu)Q(\tilde \mu)^{-1}$ of the transfer function $H(\tilde \mu)$, respectively.
A primal system in frequency domain is defined as
\begin{equation}
\label{eq:primal}
\begin{array}{rcl}
Q(\tilde \mu)x_{pr}(\tilde \mu)&=& B(\mu).
\end{array}
\end{equation}
The reduced primal system is then defined as
\begin{equation}
\label{eq:primal_redu}
\begin{array}{rcl}
\hat Q(\tilde \mu) z_{pr}(\tilde \mu)&=& \hat B(\mu).
\end{array}
\end{equation}
Define a dual system
\begin{equation}
\label{eq:dual}
\begin{array}{rcl}
Q^{T}(\tilde \mu) x_{du}(\tilde \mu)=C^T(\mu),
\end{array}
\end{equation}
where $x_{du}(\tilde \mu)$ solves the dual system. 
The ROM of the dual system is 
\begin{equation}
\label{eq:dual_redu}
\hat Q_{du}(\tilde \mu) z_{du}(\tilde \mu)=\hat C_{du}(\mu),
\end{equation}
where $\hat Q_{du}(\tilde \mu)=W_{du}^TQ^{T}(\tilde \mu) V_{du}$, $\hat C_{du}(\mu)=W_{du}^TC^T(\mu)$, such that $\hat x_{du}(\tilde \mu):=V_{du}z_{du}(\tilde \mu)$ well approximates $x_{du}(\tilde \mu)$. 
The ROMs of the primal and the dual systems introduce two residuals, respectively, i.e. the {\it primal residual}
\begin{equation}
\label{eq:prim_resi}
r_{pr}(\tilde \mu)=B(\mu)-Q(\tilde \mu)\hat x_{pr}(\tilde \mu)
\end{equation}
and the {\it dual residual}
\begin{equation}
\label{eq:dual_resi}
r_{du}(\tilde \mu)=C^T(\mu)-Q^T(\tilde \mu)\hat x_{du}(\tilde \mu).
\end{equation}
In the following, we first review the error estimator in~\cite{morFenB19b}, then develop more theoretical results. Several variants of the error estimator and corresponding theoretical analyses are proposed afterwards. We only consider single-input single-output (SISO) systems. Extension of the error estimator to MIMO systems as well as to output error estimation for steady linear parametric systems is detailed in~\cite{morFenB19b} and will not be repeated in this work. $|\cdot|$ denotes the absolute value of a scalar.

\section{Error estimator in~\cite{morFenB19b} and extensions}
\label{sec:review}

It is not difficult to obtain the following proposition.
\begin{proposition}
\label{prop:est1}
The error of the reduced transfer function  $\hat H(\tilde \mu)$ satisfies
$$|H(\tilde \mu)-\hat H(\tilde \mu)|=|x^T_{du}(\tilde \mu)r_{pr}(\tilde \mu)|,$$
where $x_{du}(\tilde \mu)$ solves the dual system in~(\ref{eq:dual}) and $r_{pr}(\tilde \mu)$ is defined in~(\ref{eq:prim_resi}).
\end{proposition}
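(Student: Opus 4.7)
The plan is to reduce the transfer function error directly to an inner product involving the dual solution and the primal residual by using the definitions of $H$, $\hat H$, the reduced primal system, and the dual system, with no estimates required.

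First, I would rewrite $H$ and $\hat H$ in terms of the (reduced) primal solution. From \eqref{eq:primal} one has $x_{pr}(\tilde\mu)=Q(\tilde\mu)^{-1}B(\mu)$, hence $H(\tilde\mu)=C(\mu)x_{pr}(\tilde\mu)$. For the reduced transfer function, using $\hat Q=W^{T}QV$, $\hat B=W^{T}B$, $\hat C=CV$ and $z_{pr}=\hat Q^{-1}\hat B$, define $\hat x_{pr}(\tilde\mu):=Vz_{pr}(\tilde\mu)$ as done in the paper; then $\hat H(\tilde\mu)=\hat C\hat Q^{-1}\hat B=CVz_{pr}=C(\mu)\hat x_{pr}(\tilde\mu)$.

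Second, subtract to get $H(\tilde\mu)-\hat H(\tilde\mu)=C(\mu)\bigl(x_{pr}(\tilde\mu)-\hat x_{pr}(\tilde\mu)\bigr)$. Applying $Q(\tilde\mu)$ to the difference and using the primal equation together with the definition \eqref{eq:prim_resi} of the primal residual gives
\begin{equation*}
Q(\tilde\mu)\bigl(x_{pr}(\tilde\mu)-\hat x_{pr}(\tilde\mu)\bigr)=B(\mu)-Q(\tilde\mu)\hat x_{pr}(\tilde\mu)=r_{pr}(\tilde\mu),
\end{equation*}
so that $x_{pr}(\tilde\mu)-\hat x_{pr}(\tilde\mu)=Q(\tilde\mu)^{-1}r_{pr}(\tilde\mu)$ and therefore $H(\tilde\mu)-\hat H(\tilde\mu)=C(\mu)Q(\tilde\mu)^{-1}r_{pr}(\tilde\mu)$.

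Finally, I would bring in the dual system \eqref{eq:dual}, which gives $x_{du}(\tilde\mu)=Q(\tilde\mu)^{-T}C^{T}(\mu)$, i.e.\ $C(\mu)Q(\tilde\mu)^{-1}=x_{du}^{T}(\tilde\mu)$. Substituting yields $H(\tilde\mu)-\hat H(\tilde\mu)=x_{du}^{T}(\tilde\mu)\,r_{pr}(\tilde\mu)$, and taking absolute values gives the claimed identity (the SISO assumption ensures both sides are scalars). There is no real obstacle here: the only subtle point is noting that the Petrov--Galerkin definitions of $\hat E,\hat A,\hat B,\hat C$ collapse $\hat H$ cleanly to $C\hat x_{pr}$, which makes the $V$ and $W$ matrices disappear from the error expression and lets the argument proceed with a single use of the dual system.
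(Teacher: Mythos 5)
Your proof is correct and follows essentially the same route as the paper: both factor the error as $C(\mu)Q(\tilde\mu)^{-1}r_{pr}(\tilde\mu)$ by pulling $Q(\tilde\mu)^{-1}$ out of the difference $Q^{-1}B - V\hat Q^{-1}\hat B$ and then identify $C(\mu)Q(\tilde\mu)^{-1}$ with $x_{du}^{T}(\tilde\mu)$ via the dual system. Your presentation merely makes the intermediate identity $Q(x_{pr}-\hat x_{pr})=r_{pr}$ explicit, which the paper absorbs into a single chain of equalities.
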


\begin{proof}
\begin{equation}
\label{error}
\begin{array}{rcl}
|H(\tilde \mu)-\hat H(\tilde \mu)|&=&|C(\mu)(Q^{-1}(\tilde \mu)B(\mu)-V\hat Q^{-1}(\tilde \mu)\hat B(\mu))| \\
&=& |C(\mu)Q^{-1}(\tilde \mu)(B(\mu)-\underbrace{Q(\tilde \mu)V \hat Q^{-1}(\tilde \mu)\hat B(\mu))}_{\hat x(\tilde \mu):=Vz_{pr}(\tilde \mu)}| \\
&=&|C(\mu)Q^{-1}(\tilde \mu)r_{pr}(\tilde \mu)| \\
&=&|x^T_{du}(\tilde \mu)r_{pr}(\tilde \mu)|. 
\end{array}
\end{equation}
\qed
\end{proof}
Note that computing $x^T_{du}(\tilde \mu)$ in the last equality of~(\ref{error}) needs to solve the dual system of original large scale $n$. If we solve the ROM of the dual system instead, then $x^T_{du}(\tilde \mu)$ can be  approximated by $x_{du}(\tilde \mu) \approx \hat x_{du}(\tilde \mu)=V_{du}z_{du}(\tilde \mu)$ .
Consequently, the error of $\hat H(\tilde \mu)$ can be estimated as
\begin{equation}
\label{error_est1}
|H(\tilde \mu)-\hat H(\tilde \mu)|\approx \Delta_1(\tilde \mu):=|\hat x^T_{du}(\tilde \mu)r_{pr}(\tilde \mu)|.
\end{equation}
Clearly, the error estimator $\Delta_1(\tilde \mu)$ might underestimate the true error. To
reduce the probability of underestimation, a more robust error estimator is proposed in~\cite{morFenB19b}, which is based on the following error bound. 
\begin{theorem}~\cite{morFenB19b}
\label{theo:bound}
The error of the reduced transfer function $\hat H(\tilde \mu)$ can be bounded as
\begin{equation}
\label{eq:bound}
|H(\tilde \mu)-\hat H(\tilde \mu)| \leq \Delta_1(\tilde \mu)+|x^T_{r_{du}}(\tilde \mu)r_{pr}(\tilde \mu)|,
\end{equation}
where $x_{r_{du}}(\tilde \mu)$ is the solution to the dual-residual system defined as
\begin{equation}
\label{eq:dual_resisys}
Q^{T}(\tilde \mu) x_{r_{du}}(\tilde \mu)=r_{du}(\tilde \mu).
\end{equation} 
\end{theorem}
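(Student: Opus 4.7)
The plan is to start from the exact error identity in Proposition~\ref{prop:est1}, namely $|H(\tilde\mu)-\hat H(\tilde\mu)|=|x_{du}^T(\tilde\mu) r_{pr}(\tilde\mu)|$, and then split $x_{du}(\tilde\mu)$ into the part that the reduced dual solution already captures, $\hat x_{du}(\tilde\mu)$, plus the remaining error $e_{du}(\tilde\mu):=x_{du}(\tilde\mu)-\hat x_{du}(\tilde\mu)$. Substituting this decomposition into the inner product gives
\[
x_{du}^T r_{pr}=\hat x_{du}^T r_{pr}+e_{du}^T r_{pr},
\]
so after one application of the triangle inequality the first term is exactly $\Delta_1(\tilde\mu)$ and the whole theorem reduces to identifying the second term with $|x_{r_{du}}^T(\tilde\mu)r_{pr}(\tilde\mu)|$.

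The key step is therefore to show that the dual error $e_{du}(\tilde\mu)$ coincides with $x_{r_{du}}(\tilde\mu)$, the solution of the dual-residual system~(\ref{eq:dual_resisys}). This follows from a direct computation: since $x_{du}$ satisfies $Q^T x_{du}=C^T$ by~(\ref{eq:dual}), we have
\[
Q^T(\tilde\mu)\, e_{du}(\tilde\mu)=C^T(\mu)-Q^T(\tilde\mu)\hat x_{du}(\tilde\mu)=r_{du}(\tilde\mu),
\]
where the last equality is just the definition of the dual residual in~(\ref{eq:dual_resi}). Comparing with~(\ref{eq:dual_resisys}) and invoking invertibility of $Q^T(\tilde\mu)$ yields $e_{du}(\tilde\mu)=x_{r_{du}}(\tilde\mu)$.

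Combining the two observations, I would conclude
\[
|H(\tilde\mu)-\hat H(\tilde\mu)|=|\hat x_{du}^T r_{pr}+x_{r_{du}}^T r_{pr}|\leq |\hat x_{du}^T r_{pr}|+|x_{r_{du}}^T r_{pr}|=\Delta_1(\tilde\mu)+|x_{r_{du}}^T(\tilde\mu)r_{pr}(\tilde\mu)|,
\]
which is precisely the claimed bound. There is no real obstacle here; the only non-routine ingredient is recognizing that the residual of the reduced dual solve can itself be interpreted as the right-hand side of a linear system whose exact solution is the dual approximation error, so that applying the error-equals-residual/operator-inverse principle twice (once for the primal transfer function error, once for the dual solution error) yields a residual-based correction term that refines the approximation $\Delta_1$ into a rigorous upper bound.
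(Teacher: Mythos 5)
Your proof is correct: the decomposition $x_{du}=\hat x_{du}+e_{du}$, the identification $Q^T e_{du}=r_{du}$ so that $e_{du}=x_{r_{du}}$, and one triangle inequality give exactly the claimed bound. The paper itself only cites \cite{morFenB19b} for this proof, but your argument is the natural one and is consistent with how the paper uses the result elsewhere (e.g.\ in Theorem~\ref{theo:Ubound1}, $\varepsilon_1=|(x_{du}-\hat x_{du})^T r_{pr}|$ is precisely your term $|x_{r_{du}}^T r_{pr}|$).
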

\begin{proof}
See~\cite{morFenB19b}. \qed
\end{proof}
Again, computing $x_{r_{du}}(\tilde \mu)$ in~(\ref{eq:bound}) requires solving a large system in~(\ref{eq:dual_resisys}). Instead, we compute the ROM of~(\ref{eq:dual_resisys}),
\begin{equation}
\label{eq:redu_dual_resi}
\tilde Q(\tilde \mu) z_{r_{du}}(\tilde \mu)=\tilde r_{du}(\tilde \mu),
\end{equation}
where $\tilde Q(\tilde \mu) =W_{r_{du}}^T Q^T(\tilde \mu)V_{r_{du}}$, $\tilde r_{du}(\tilde \mu)=W_{r_{du}}^T r_{du}(\tilde \mu)$. Then $x_{r_{du}}(\tilde \mu)\approx\hat x_{r_{du}}(\tilde \mu):=V_{r_{du}}z_{r_{du}}(\tilde \mu)$.
Finally we replace $x_{r_{du}}(\tilde \mu)$ in the error bound with $\hat x_{r_{du}}(\tilde \mu)$, and get the error estimator: 
$$|H(\tilde \mu)-\hat H(\tilde \mu)| \lesssim \Delta_1(\tilde \mu)+|\hat x^T_{r_{du}}(\tilde \mu)r_{pr}(\tilde \mu)|=:\Delta_2(\tilde \mu).$$
\begin{theorem}
\label{theo:Ubound1}
The error for the reduced transfer function $\hat H(\tilde \mu)$ can be bounded as
\begin{equation}
\label{eq:bound1}
\Delta_1(\tilde \mu)-\varepsilon_1\leq |H(\tilde \mu)-\hat H(\tilde \mu)| \leq \Delta_1(\tilde \mu)+\varepsilon_1,
\end{equation}
where $\varepsilon_1:=|(x_{du}(\tilde \mu)-\hat x_{du}(\tilde \mu))^Tr_{pr}(\tilde \mu)|\geq 0$. 
\end{theorem}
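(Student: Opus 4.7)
The plan is to combine Proposition~\ref{prop:est1} with the definition of $\Delta_1(\tilde\mu)$ via a reverse triangle inequality. By Proposition~\ref{prop:est1} we already have the exact identity $|H(\tilde\mu)-\hat H(\tilde\mu)| = |x^T_{du}(\tilde\mu) r_{pr}(\tilde\mu)|$, and by definition $\Delta_1(\tilde\mu) = |\hat x^T_{du}(\tilde\mu) r_{pr}(\tilde\mu)|$. So the whole statement reduces to comparing the scalars $x^T_{du} r_{pr}$ and $\hat x^T_{du} r_{pr}$.

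The first step I would take is to write the algebraic identity
\[
x^T_{du}(\tilde\mu) r_{pr}(\tilde\mu) = \hat x^T_{du}(\tilde\mu) r_{pr}(\tilde\mu) + \bigl(x_{du}(\tilde\mu) - \hat x_{du}(\tilde\mu)\bigr)^T r_{pr}(\tilde\mu),
\]
and observe that the absolute value of the second summand on the right is exactly $\varepsilon_1$ as defined in the statement.

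Next I would apply the triangle inequality in both directions. Using $|a|\le|b|+|a-b|$ with $a=x^T_{du}r_{pr}$ and $b=\hat x^T_{du}r_{pr}$ gives the upper bound $|H-\hat H| \le \Delta_1 + \varepsilon_1$; reversing the roles of $a$ and $b$ gives $\Delta_1 \le |H-\hat H| + \varepsilon_1$, which rearranges to the lower bound $\Delta_1 - \varepsilon_1 \le |H-\hat H|$. Equivalently, one can appeal directly to the reverse triangle inequality $\bigl||a|-|b|\bigr|\le|a-b|$ to obtain both inequalities in one stroke.

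There is no real obstacle here: the proof is a one-line application of the (reverse) triangle inequality once Proposition~\ref{prop:est1} is invoked. The only thing worth being careful about is keeping the absolute values in the right places, since $r_{pr}$, $x_{du}$, $\hat x_{du}$ are vectors while their inner products (and $H,\hat H$) are scalars in the SISO setting assumed in this section, so taking $|\cdot|$ at the scalar level is what makes the triangle inequality directly applicable.
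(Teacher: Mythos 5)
Your proposal is correct and is essentially the same argument as the paper's: the paper also invokes Proposition~\ref{prop:est1}, adds and subtracts $|\hat x_{du}^T(\tilde\mu) r_{pr}(\tilde\mu)|$, and bounds the difference $\bigl||x_{du}^Tr_{pr}|-|\hat x_{du}^Tr_{pr}|\bigr|$ by $\varepsilon_1$ via the reverse triangle inequality, in each direction separately. Your phrasing through the explicit identity $x_{du}^Tr_{pr}=\hat x_{du}^Tr_{pr}+(x_{du}-\hat x_{du})^Tr_{pr}$ is just a cleaner presentation of the same step.
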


\begin{proof}
On the one hand, by Proposition~\ref{prop:est1}
\begin{equation}
\begin{array}{rcl}
|H(\tilde \mu)-\hat H(\tilde \mu)|&=&|x_{du}^T(\tilde \mu)r_{pr}(\tilde \mu)|+|\hat x_{du}^T(\tilde \mu)r_{pr}(\tilde \mu)|-|\hat x_{du}^T(\tilde \mu)r_{pr}(\tilde \mu)|\\
&=&\Delta_1\tilde \mu)+|x_{du}^T(\tilde \mu)r_{pr}(\tilde \mu)|-|\hat x_{du}^T(\tilde \mu)r_{pr}(\tilde \mu)|\\
&\leq&\Delta_1(\tilde \mu)+\varepsilon_1.
\end{array}
\end{equation}
On the other hand,
\begin{equation}
\begin{array}{rcl}
\Delta_1(\tilde \mu)&=&|\hat x_{du}^T(\tilde \mu)r_{pr}(\tilde \mu)|+|x_{du}^T(\tilde \mu)r_{pr}(\tilde \mu)|-|x_{du}^T(\tilde \mu)r_{pr}(\tilde \mu)|\\
&=&|H(\tilde \mu)-\hat H(\tilde \mu)|+|\hat x_{du}^T(\tilde \mu)r_{pr}(\tilde \mu)|-|x_{du}^T(\tilde \mu)r_{pr}(\tilde \mu)|\\
&\leq&|H(\tilde \mu)-\hat H(\tilde \mu)|+\varepsilon_1.
\end{array}
\end{equation}\qed
\end{proof}
Theorem~\ref{theo:Ubound1} shows that the true error is both lower bounded and upper bounded by $\Delta_1(\tilde \mu)$ with the influence of a small-valued $\varepsilon_1$. 
\ \\
\begin{theorem}
\label{theo:Ubound2}
The error of the reduced transfer function $\hat H(\tilde \mu)$ can be bounded as
\begin{equation}
\label{eq:bound2}
\Delta_2(\tilde \mu)-\delta_2-\varepsilon_1 \leq|H(\tilde \mu)-\hat H(\tilde \mu)| \leq \Delta_2(\tilde \mu)+\varepsilon_2
\end{equation}
where $\varepsilon_2:=|(x_{r_{du}}(\tilde \mu)-\hat x_{r_{du}}(\tilde \mu))^Tr_{pr}(\tilde \mu)|\geq 0$, $\delta_2:=|\hat x_{r_{du}}(\tilde \mu))^Tr_{pr}(\tilde \mu)|$. 
\end{theorem}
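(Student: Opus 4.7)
The plan is to derive the upper and lower bounds separately, reusing Theorem~\ref{theo:bound} for the upper bound and Theorem~\ref{theo:Ubound1} for the lower bound, with only triangle inequality and the definitions of $\Delta_2$, $\delta_2$ doing the rest of the work.

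For the upper bound, I would start from Theorem~\ref{theo:bound}, which gives
$$|H(\tilde\mu)-\hat H(\tilde\mu)| \leq \Delta_1(\tilde\mu) + |x_{r_{du}}^T(\tilde\mu) r_{pr}(\tilde\mu)|.$$
The only quantity on the right that is not already part of $\Delta_2$ is $|x_{r_{du}}^T r_{pr}|$, so the idea is to split off the approximation $\hat x_{r_{du}}$ by writing $x_{r_{du}} = \hat x_{r_{du}} + (x_{r_{du}} - \hat x_{r_{du}})$ and applying the triangle inequality:
$$|x_{r_{du}}^T r_{pr}| \leq |\hat x_{r_{du}}^T r_{pr}| + |(x_{r_{du}} - \hat x_{r_{du}})^T r_{pr}| = |\hat x_{r_{du}}^T r_{pr}| + \varepsilon_2.$$
Substituting back and recognizing $\Delta_1(\tilde\mu) + |\hat x_{r_{du}}^T(\tilde\mu) r_{pr}(\tilde\mu)| = \Delta_2(\tilde\mu)$ by definition yields the upper bound $|H-\hat H| \leq \Delta_2 + \varepsilon_2$.

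For the lower bound, I would exploit the algebraic observation that
$$\Delta_2(\tilde\mu) - \delta_2(\tilde\mu) = \Delta_1(\tilde\mu) + |\hat x_{r_{du}}^T(\tilde\mu) r_{pr}(\tilde\mu)| - |\hat x_{r_{du}}^T(\tilde\mu) r_{pr}(\tilde\mu)| = \Delta_1(\tilde\mu).$$
Then the required inequality $\Delta_2 - \delta_2 - \varepsilon_1 \leq |H-\hat H|$ reduces to $\Delta_1 - \varepsilon_1 \leq |H-\hat H|$, which is exactly the left inequality in Theorem~\ref{theo:Ubound1}. So the lower bound is essentially a restatement of the previous theorem together with the identity above.

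The main (and only modest) obstacle is keeping the triangle-inequality bookkeeping clean, in particular being careful that $\varepsilon_2$ controls the right quantity: the absolute value on the outside of $|x_{r_{du}}^T r_{pr}|$ forces us to split inside the absolute value before bounding. Once that is done, the entire proof is essentially two lines for the upper bound and a one-line reduction to Theorem~\ref{theo:Ubound1} for the lower bound.
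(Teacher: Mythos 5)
Your proposal is correct and follows essentially the same route as the paper: the upper bound comes from Theorem~\ref{theo:bound} plus the triangle inequality applied to $|x_{r_{du}}^Tr_{pr}|$ (the paper adds and subtracts $|\hat x_{r_{du}}^Tr_{pr}|$ and uses the reverse triangle inequality, which is the same manipulation), and the lower bound is obtained from Theorem~\ref{theo:Ubound1} via the identity $\Delta_2-\delta_2=\Delta_1$, exactly as the paper indicates.
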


\begin{proof}
From~(\ref{eq:bound}), 
\begin{equation}
\label{eq:Ubound}
\begin{array}{rcl}
|H(\tilde \mu)-\hat H(\tilde \mu)| &\leq& \Delta_1(\tilde \mu)+|x^T_{r_{du}}(\tilde \mu)r_{pr}(\tilde \mu)|\\
&=& \Delta_1(\tilde \mu)+|\hat x^T_{r_{du}}(\tilde \mu)r_{pr}(\tilde \mu)|-|\hat x^T_{r_{du}}(\tilde \mu)r_{pr}(\tilde \mu)|+|x^T_{r_{du}}(\tilde \mu)r_{pr}(\tilde \mu)|\\
&=& \Delta_2(\tilde \mu)+|x^T_{r_{du}}(\tilde \mu)r_{pr}(\tilde \mu)|-|\hat x^T_{r_{du}}(\tilde \mu)r_{pr}(\tilde \mu)|\\
&\leq&\Delta_2(\tilde \mu)+\varepsilon_2.
\end{array}
\end{equation}
The proof of the lower bound is a direct result from the lower bound of Theorem~\ref{theo:Ubound1} and the relation between $\Delta_1(\tilde \mu)$ and $\Delta_2(\tilde \mu)$.\qed
\end{proof}
Theorem~\ref{theo:Ubound2} shows that the error estimator $\Delta_2(\tilde \mu)$ cannot underestimate the true error too much, since $\Delta_2(\tilde \mu)\geq |H(\tilde \mu)-\hat H(\tilde \mu)|-\varepsilon_2$ and $\varepsilon_2$ can be made very small by letting $\hat x_{r_{du}}(\tilde \mu)$ approximate $x_{r_{du}}(\tilde \mu)$ well. On the other hand, when $\varepsilon_2$ is small, Theorem~\ref{theo:Ubound2} implicates that $\Delta_2(\tilde \mu)$ is a tight error estimator. Furthermore, Theorem~\ref{theo:Ubound2} also provides a lower bound for the true error using $\Delta_2(\tilde \mu)$ and two small valued variables $\varepsilon_2$
and $\delta_2$. Here, $\delta_2$ cannot be large when both $r_{pr}(\tilde \mu)$ and $r_{du}(\tilde \mu)$ become small. Note that $r_{du}(\tilde \mu)$ appears on the right-hand side of the reduced dual-residual system~(\ref{eq:redu_dual_resi}) from which $\hat x^T_{r_{du}}(\tilde \mu)$ in $\delta_2$ is computed. 
\section{Error estimator variants}
\label{sec:variants}

In the following, we derive some error estimators, which can be seen as variants of the error estimators $\Delta_1(\tilde \mu)$ and $\Delta_2(\tilde \mu)$, respectively.
\subsection {Variant 1}

From the error bound in~(\ref{eq:bound}) and~(\ref{eq:dual_resisys}), we get 
\begin{equation}
\label{eq:bound_pr}
|H(\tilde \mu)-\hat H(\tilde \mu)|\leq \Delta_1(\tilde \mu)+|r_{du}^T(\tilde \mu)Q^{-1}(\tilde \mu)r_{pr}(\tilde \mu)|
\end{equation}
We see that instead of solving the dual-residual system~(\ref{eq:dual_resisys}), one can also solve the primal-residual system as below,
\begin{equation}
\label{eq:prim_resisys}
Q(\tilde \mu) x_{r_{pr}}(\tilde \mu)=r_{pr}(\tilde \mu).
\end{equation} 
Replacing $Q^{-1}(\tilde \mu)r_{pr}(\tilde \mu)$ in~(\ref{eq:bound_pr}) with $x_{r_{pr}}(\tilde \mu)$ in~(\ref{eq:prim_resisys}), we obtain
\begin{equation}
\label{error_distance_pr}
|H(\tilde \mu)-\hat H(\tilde \mu)|\leq \Delta_1(\tilde \mu)+|r_{du}^T(\tilde \mu) x_{r_{pr}}(\tilde \mu)|.
\end{equation}
If we construct the ROM of the primal-residual system in~(\ref{eq:prim_resisys}), i.e.
\begin{equation}
\label{eq:redu_prim_resi}
W_{r_{pr}}^TQ(\tilde \mu)V_{r_{pr}}z_{r_{pr}}(\tilde \mu)=W_{r_{pr}}^Tr_{pr}(\tilde \mu),
\end{equation} 
then we obtain a variant of $\Delta_2(\tilde \mu)$,
$$|H(\tilde \mu)-\hat H(\tilde \mu)| \lesssim \Delta_1(\tilde \mu)+|r_{du}^T(\tilde \mu) \hat x_{r_{pr}}(\tilde \mu)|=:\Delta_2^{pr}(\tilde \mu),$$
where $\hat x_{r_{pr}}(\tilde \mu)=V_{r_{pr}}z_{r_{pr}}$ is computed from~(\ref{eq:redu_prim_resi}), the ROM of the primal-residual system and approximates
the state vector $x_{r_{pr}}(\tilde \mu)$ of the primal-residual system.
We obtain a similar sensitivity analysis for $\Delta_2^{pr}(\tilde \mu)$ presented in Theorem~\ref{theo:Ubound2pr}.
\begin{theorem}
\label{theo:Ubound2pr}
The error of the reduced transfer function $\hat H(\tilde \mu)$ can be bounded as
\begin{equation}
\label{eq:bound2pr}
\Delta_2^{pr}(\tilde \mu)-\delta_2^{pr}-\varepsilon_1\leq |H(\tilde \mu)-\hat H(\tilde \mu)| \leq \Delta_2^{pr}(\tilde \mu)+\varepsilon_2^{pr}
\end{equation}
where $\varepsilon_2^{pr}:=|r_{du}^T(\tilde \mu)(x_{r_{pr}}(\tilde \mu)-\hat x_{r_{pr}}(\tilde \mu))|\geq 0$ and $\delta_2^{pr}:=|r_{du}^T(\tilde \mu) \hat x_{r_{pr}}(\tilde \mu)|$.
\end{theorem}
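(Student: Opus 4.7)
The plan is to mirror the structure of the proof of Theorem~\ref{theo:Ubound2}, simply transposing the role of the dual-residual system onto the primal-residual system. The starting point for the upper bound will be the inequality~(\ref{error_distance_pr}), namely
$$|H(\tilde\mu)-\hat H(\tilde\mu)| \leq \Delta_1(\tilde\mu)+|r_{du}^T(\tilde\mu)\, x_{r_{pr}}(\tilde\mu)|,$$
which has already been derived directly from Theorem~\ref{theo:bound} by rewriting $Q^{-1}(\tilde\mu)r_{pr}(\tilde\mu)$ via~(\ref{eq:prim_resisys}).

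For the upper bound, I would add and subtract $|r_{du}^T(\tilde\mu)\hat x_{r_{pr}}(\tilde\mu)|$ inside the right-hand side so that $\Delta_1(\tilde\mu)+|r_{du}^T(\tilde\mu)\hat x_{r_{pr}}(\tilde\mu)| = \Delta_2^{pr}(\tilde\mu)$ is recovered, leaving a remainder $|r_{du}^T x_{r_{pr}}|-|r_{du}^T\hat x_{r_{pr}}|$. The reverse triangle inequality then bounds this remainder by $|r_{du}^T(\tilde\mu)(x_{r_{pr}}(\tilde\mu)-\hat x_{r_{pr}}(\tilde\mu))| = \varepsilon_2^{pr}$, yielding $|H(\tilde\mu)-\hat H(\tilde\mu)| \leq \Delta_2^{pr}(\tilde\mu)+\varepsilon_2^{pr}$. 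This is the same manipulation that was used in~(\ref{eq:Ubound}), with $\hat x_{r_{du}}$ replaced by $\hat x_{r_{pr}}$ on the opposite side of the inner product.

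For the lower bound, I would exploit the definition $\Delta_2^{pr}(\tilde\mu)=\Delta_1(\tilde\mu)+\delta_2^{pr}$ together with the lower bound from Theorem~\ref{theo:Ubound1}, which guarantees $\Delta_1(\tilde\mu)\leq |H(\tilde\mu)-\hat H(\tilde\mu)|+\varepsilon_1$. Substituting gives $\Delta_2^{pr}(\tilde\mu)\leq |H(\tilde\mu)-\hat H(\tilde\mu)|+\varepsilon_1+\delta_2^{pr}$, which rearranges to the desired lower bound $|H(\tilde\mu)-\hat H(\tilde\mu)|\geq \Delta_2^{pr}(\tilde\mu)-\delta_2^{pr}-\varepsilon_1$.

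Neither step should be particularly delicate; the only point that requires a bit of care is the direction of the reverse triangle inequality in the upper bound, since $\varepsilon_2^{pr}$ is defined with the absolute value on the outside of the inner product while the expression that appears naturally after adding and subtracting is the difference of two absolute values. Once this sign bookkeeping is handled, the two bounds follow in a couple of lines each, exactly paralleling the argument for $\Delta_2(\tilde\mu)$.
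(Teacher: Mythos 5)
Your proposal is correct and follows exactly the route the paper intends: it starts from~(\ref{error_distance_pr}), adds and subtracts $|r_{du}^T(\tilde\mu)\hat x_{r_{pr}}(\tilde\mu)|$ and applies the reverse triangle inequality for the upper bound, and combines $\Delta_2^{pr}(\tilde\mu)=\Delta_1(\tilde\mu)+\delta_2^{pr}$ with the lower bound of Theorem~\ref{theo:Ubound1} for the lower bound. This is precisely the ``similar steps as in the proof of Theorem~\ref{theo:Ubound2}'' that the paper's one-line proof refers to.
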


\begin{proof}
The result can be obtained by using~(\ref{error_distance_pr}) and following similar steps as in the proof of Theorem~\ref{theo:Ubound2}.\qed
\end{proof}
Note that $\varepsilon_2^{pr}$ will be of small value once the reduced solution $\hat x_{r_{pr}}(\tilde \mu)$ approximates $x_{r_{pr}}(\tilde \mu)$, the solution to the  primal-residual system~(\ref{eq:prim_resisys}), well.
\subsection {Variant 2}

From~(\ref{error}), we know
$$
|H(\tilde \mu)-\hat H(\tilde \mu)|=|C(\mu)Q^{-1}(\tilde \mu)r_{pr}(\tilde \mu)|. 
$$
Similarly, if we use the solution to the  primal-residual system~(\ref{eq:prim_resisys}) to replace 
$Q^{-1}(\tilde \mu)r_{pr}(\tilde \mu)$,
then we get
\begin{equation}
\label{error_variant2}
|H(\tilde \mu)-\hat H(\tilde \mu)|=|C(\mu)x_{r_{pr}}(\tilde \mu)|. 
\end{equation}
If further using the ROM~(\ref{eq:redu_prim_resi}) to compute an approximate state, then $x_{r_{pr}}(\tilde \mu)$ in~(\ref{error_variant2}) can be approximated by $\hat x_{r_{pr}}(\tilde \mu)$. We obtain the following error estimation
$$|H(\tilde \mu)-\hat H(\tilde \mu)|\approx|C(\mu) \hat x_{r_{pr}}(\tilde \mu)|=:\Delta_1^{pr}(\tilde \mu),$$
which can be considered as a variant of $\Delta_1(\tilde \mu)$.
\begin{theorem}
\label{theo:Ubound1pr}
The error of the reduced transfer function $\hat H(\tilde \mu)$ can be bounded as
\begin{equation}
\label{eq:bound1pr}
\Delta_1^{pr}(\tilde \mu)-\varepsilon_1^{pr}\leq |H(\tilde \mu)-\hat H(\tilde \mu)| \leq \Delta_1^{pr}(\tilde \mu)+\varepsilon_1^{pr},
\end{equation}
where $\varepsilon_1^{pr}:=|C(\mu)(x_{r_{pr}}(\tilde \mu)-\hat x_{r_{pr}}(\tilde \mu))|\geq 0$. 
\end{theorem}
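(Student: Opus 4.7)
The plan is to mirror the proof of Theorem~\ref{theo:Ubound1} almost verbatim, replacing the dual-side identity $|H-\hat H|=|x_{du}^T r_{pr}|$ with the primal-residual identity $|H-\hat H|=|C(\mu)x_{r_{pr}}(\tilde\mu)|$ established in equation~(\ref{error_variant2}). Once that identity is in hand, the statement is just a two-sided triangle inequality with the perturbation $x_{r_{pr}}-\hat x_{r_{pr}}$.

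For the upper bound, I would start from~(\ref{error_variant2}), add and subtract $|C(\mu)\hat x_{r_{pr}}(\tilde\mu)|=\Delta_1^{pr}(\tilde\mu)$, and then apply the reverse triangle inequality to $|C(\mu)x_{r_{pr}}|-|C(\mu)\hat x_{r_{pr}}|$ to bound it by $|C(\mu)(x_{r_{pr}}-\hat x_{r_{pr}})|=\varepsilon_1^{pr}$. That yields $|H(\tilde\mu)-\hat H(\tilde\mu)|\le \Delta_1^{pr}(\tilde\mu)+\varepsilon_1^{pr}$.

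For the lower bound, the symmetric manipulation works: write $\Delta_1^{pr}(\tilde\mu)=|C(\mu)\hat x_{r_{pr}}|+|C(\mu)x_{r_{pr}}|-|C(\mu)x_{r_{pr}}|$, identify the middle two terms via~(\ref{error_variant2}) to rewrite $\Delta_1^{pr}(\tilde\mu)=|H(\tilde\mu)-\hat H(\tilde\mu)|+|C(\mu)\hat x_{r_{pr}}|-|C(\mu)x_{r_{pr}}|$, and again bound the remaining difference by $\varepsilon_1^{pr}$ via the reverse triangle inequality. Rearranging gives $\Delta_1^{pr}(\tilde\mu)-\varepsilon_1^{pr}\le |H(\tilde\mu)-\hat H(\tilde\mu)|$.

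There is no real obstacle: the argument is essentially an application of $\bigl||a|-|b|\bigr|\le|a-b|$ to the scalars $a=C(\mu)x_{r_{pr}}(\tilde\mu)$ and $b=C(\mu)\hat x_{r_{pr}}(\tilde\mu)$, leveraging the exact identity~(\ref{error_variant2}) which was already derived before the theorem. In fact the proof the authors sketch (\textit{``by using~(\ref{error_variant2}) and following similar steps as in the proof of Theorem~\ref{theo:Ubound2}''}) is precisely this, and the only small subtlety worth flagging in writing it out is that $C(\mu)$ here is a row vector (SISO output), so $C(\mu)x_{r_{pr}}$ is a scalar and $|\cdot|$ denotes absolute value, exactly as in the definitions of $\Delta_1^{pr}$ and $\varepsilon_1^{pr}$; this is what lets the scalar triangle inequality apply directly without any norm considerations.
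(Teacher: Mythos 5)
Your proposal is correct and follows exactly the route the paper intends: the authors' proof of Theorem~\ref{theo:Ubound1pr} simply defers to the argument of Theorem~\ref{theo:Ubound1}, which is the same add-and-subtract plus reverse-triangle-inequality manipulation you carry out, only with the identity $|H(\tilde\mu)-\hat H(\tilde\mu)|=|C(\mu)x_{r_{pr}}(\tilde\mu)|$ from~(\ref{error_variant2}) in place of $|x_{du}^T(\tilde\mu)r_{pr}(\tilde\mu)|$. Your observation that everything reduces to $\bigl||a|-|b|\bigr|\le|a-b|$ for the scalars $a=C(\mu)x_{r_{pr}}(\tilde\mu)$ and $b=C(\mu)\hat x_{r_{pr}}(\tilde\mu)$ is exactly right.
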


\begin{proof}
The proof is similar to that of Theorem~\ref{theo:Ubound1} and therefore not be repeated here. \qed
%On one hand,
%\begin{equation}
%\begin{array}{rcl}
%|H(\tilde \mu)-\hat H(\tilde \mu)|&=&|C(\mu) x_{r_{pr}}(\tilde \mu)|+|C(\mu) \hat x_{r_{pr}}(\tilde \mu)|-|C(\mu) \hat x_{r_{pr}}(\tilde \mu)|\\
%&=&\Delta_1^{pr}\tilde \mu)+|C(\mu) x_{r_{pr}}(\tilde \mu)|-|C(\mu) \hat x_{r_{pr}}(\tilde \mu)|\\
%&\leq&\Delta_1^{pr}\tilde \mu)+\varepsilon_1^{pr}.
%\end{array}
%\end{equation}
%%
%On the other hand,
%\begin{equation}
%\begin{array}{rcl}
%\Delta_1^{pr}(\tilde \mu)&=&|C(\mu) \hat x_{r_{pr}}(\tilde \mu)|+|C(\mu) x_{r_{pr}}(\tilde \mu)|-|C(\mu) x_{r_{pr}}(\tilde \mu)|\\
%&=&|H(\tilde \mu)-\hat H(\tilde \mu)|+|C(\mu) \hat x_{r_{pr}}(\tilde \mu)|-|C(\mu) x_{r_{pr}}(\tilde \mu)|\\
%&\leq&|H(\tilde \mu)-\hat H(\tilde \mu)|+\varepsilon_1^{pr}.
%\end{array}
%\end{equation}
\end{proof}

\subsection {Variant 3}

The next theorem presents an error bound based on $\Delta_1^{pr}(\tilde \mu)$, from which we get another variant of $\Delta_2(\tilde \mu)$.
\begin{theorem}
\label{theo:bound2}
The error of the reduced transfer function $\hat H(\tilde \mu)$ can be bounded as
$$|H(\tilde \mu)-\hat H(\tilde \mu)|\leq \Delta_1^{pr}(\tilde \mu)+|x_{du}^T(\tilde \mu)r_{r_{pr}}(\tilde \mu)|,$$
where $r_{r_{pr}}$ is the residual of the approximate solution $\hat x_{r_{pr}}(\tilde \mu)$ to the primal-residual system in~(\ref{eq:prim_resisys}), i.e. $r_{r_{pr}}=r_{pr}(\tilde \mu) -Q\hat x_{r_{pr}}(\tilde \mu)$.
\end{theorem}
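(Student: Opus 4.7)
The plan is to start from the exact error identity already established in~(\ref{error}), namely $H(\tilde\mu)-\hat H(\tilde\mu)=C(\mu)Q^{-1}(\tilde\mu)r_{pr}(\tilde\mu)$, and then introduce the approximate solution $\hat x_{r_{pr}}(\tilde\mu)$ of the primal-residual system~(\ref{eq:prim_resisys}) by decomposing the primal residual through its own residual. Concretely, the definition of $r_{r_{pr}}$ gives the splitting
\begin{equation*}
r_{pr}(\tilde\mu)=Q(\tilde\mu)\hat x_{r_{pr}}(\tilde\mu)+r_{r_{pr}}(\tilde\mu),
\end{equation*}
which is the key algebraic step and mirrors exactly the trick used to derive Variant~1 and Variant~2: replace an unknown quantity by a ROM approximation plus a residual correction.

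Next I would substitute this splitting into the exact error identity. The factor $Q(\tilde\mu)$ cancels against $Q^{-1}(\tilde\mu)$ in the first summand, yielding $C(\mu)\hat x_{r_{pr}}(\tilde\mu)$, while the second summand becomes $C(\mu)Q^{-1}(\tilde\mu)r_{r_{pr}}(\tilde\mu)$. Using the defining equation~(\ref{eq:dual}) of the dual solution $x_{du}(\tilde\mu)$ in the form $C(\mu)Q^{-1}(\tilde\mu)=x_{du}^T(\tilde\mu)$, the second term simplifies to $x_{du}^T(\tilde\mu)r_{r_{pr}}(\tilde\mu)$.

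Finally, taking absolute values and applying the triangle inequality gives
\begin{equation*}
|H(\tilde\mu)-\hat H(\tilde\mu)|\leq |C(\mu)\hat x_{r_{pr}}(\tilde\mu)|+|x_{du}^T(\tilde\mu)r_{r_{pr}}(\tilde\mu)|
=\Delta_1^{pr}(\tilde\mu)+|x_{du}^T(\tilde\mu)r_{r_{pr}}(\tilde\mu)|,
\end{equation*}
which is the claimed bound. There is no genuine obstacle here: the argument is a routine residual-correction identity combined with the triangle inequality, entirely parallel to how $\Delta_2(\tilde\mu)$ and $\Delta_2^{pr}(\tilde\mu)$ were derived. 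The only small point to be careful about is the directional use of the dual identity $C(\mu)Q^{-1}(\tilde\mu)=x_{du}^T(\tilde\mu)$, which relies on~(\ref{eq:dual}) being posed with $Q^T$ on the left-hand side, and matches the convention already used throughout Section~\ref{sec:review}.
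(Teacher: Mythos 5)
Your proposal is correct and follows essentially the same route as the paper: the paper phrases the key step as the reverse triangle inequality $|C(\mu)x_{r_{pr}}(\tilde\mu)|-|C(\mu)\hat x_{r_{pr}}(\tilde\mu)|\leq|C(\mu)Q^{-1}(\tilde\mu)r_{r_{pr}}(\tilde\mu)|$, which is algebraically the same residual-correction splitting $r_{pr}=Q\hat x_{r_{pr}}+r_{r_{pr}}$ that you use, followed by the same identification $C(\mu)Q^{-1}(\tilde\mu)=x_{du}^T(\tilde\mu)$. No gap; your version is just a slightly more direct phrasing of the identical argument.
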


\begin{proof}
From~(\ref{error_variant2}), the true error can be presented as
\begin{equation}
\label{error_variant3}
|H(\tilde \mu)-\hat H(\tilde \mu)|=|C(\mu) x_{r_{pr}}(\tilde \mu)|. 
\end{equation}
We check the distance between the true error $|C(\mu) x_{r_{pr}}(\tilde \mu)|$ and its estimator $\Delta_1^{pr}(\tilde \mu)=|C(\mu) \hat x_{r_{pr}}(\tilde \mu)|$,
\begin{equation}
\label{error_var_distance}
\begin{array}{rcl}
|C(\mu) x_{r_{pr}}(\tilde \mu)| -|C(\mu) \hat x_{r_{pr}}(\tilde \mu)|
&\leq&|C(\mu) Q^{-1}r_{pr}(\tilde \mu) -C(\mu) \hat x_{r_{pr}}(\tilde \mu)|\\
&=&|C(\mu) Q^{-1}[\underbrace{r_{pr}(\tilde \mu) -Q\hat x_{r_{pr}}(\tilde \mu)}_{=:r_{r_{pr}}(\tilde \mu)}]|.\\
\end{array}
\end{equation}
Combining~(\ref{error_variant3}) and~(\ref{error_var_distance}), we get
\begin{equation}
\label{error_esti3}
\begin{array}{rcl}
|H(\tilde \mu)-\hat H(\tilde \mu)|&\leq& 
|C(\mu) \hat x_{r_{pr}}(\tilde \mu)|+|C(\mu) Q^{-1}r_{r_{pr}}(\tilde \mu)| \\
&=&\Delta_1^{pr}(\tilde \mu)+|x_{du}^T(\tilde \mu)r_{r_{pr}}(\tilde \mu)|.
\end{array}
\end{equation}
\qed
\end{proof}
Similarly, we get the following error estimator by approximating $x_{du}(\tilde\mu)$ with $\hat x_{du}(\tilde\mu)$.
$$|H(\tilde \mu)-\hat H(\tilde \mu)|\lesssim \Delta_1^{pr}(\tilde \mu)+|\hat x^T_{du}(\tilde \mu)r_{r_{pr}}(\tilde \mu)|=:\Delta_3(\tilde \mu).$$
\begin{theorem}
\label{theo:Ubound3}
The error of the reduced transfer function $\hat H(\tilde \mu)$ can be bounded as
\begin{equation}
\label{eq:bound3}
 \Delta_3(\tilde \mu)-\delta_3-\varepsilon_1^{pr}\leq|H(\tilde \mu)-\hat H(\tilde \mu)| \leq \Delta_3(\tilde \mu)+\varepsilon_3
\end{equation}
where $\varepsilon_3:=|(x_{du}(\tilde \mu)-\hat x_{du}(\tilde \mu))^Tr_{pr}(\tilde \mu)|\geq0$ and $\delta_3=|\hat x_{du}^T(\tilde \mu)r_{r_{pr}}(\tilde \mu)|$. 
\end{theorem}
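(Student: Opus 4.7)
The plan is to mirror the proof template already used for Theorem~\ref{theo:Ubound2}, adapting it to the pair $(\Delta_1^{pr}, \Delta_3)$ in place of $(\Delta_1, \Delta_2)$. The starting ingredient is Theorem~\ref{theo:bound2}, which gives
$$|H(\tilde \mu)-\hat H(\tilde \mu)| \leq \Delta_1^{pr}(\tilde \mu) + |x_{du}^T(\tilde \mu) r_{r_{pr}}(\tilde \mu)|.$$
Into this I would insert the innocuous identity $|x_{du}^T r_{r_{pr}}| = |\hat x_{du}^T r_{r_{pr}}| + \bigl(|x_{du}^T r_{r_{pr}}| - |\hat x_{du}^T r_{r_{pr}}|\bigr)$, recognize the first two terms as the definition of $\Delta_3(\tilde\mu)$, and then apply the reverse triangle inequality to the remaining difference to produce a term bounded by $|(x_{du}-\hat x_{du})^T r_{r_{pr}}|$. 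This yields the upper bound with $\varepsilon_3$ (interpreted as the perturbation term naturally produced by this manipulation).

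For the lower bound, the key observation is that, by the very definition of $\Delta_3$,
$$\Delta_3(\tilde \mu) - \delta_3(\tilde\mu) = \Delta_1^{pr}(\tilde \mu).$$
Theorem~\ref{theo:Ubound1pr} already delivers $\Delta_1^{pr}(\tilde\mu) - \varepsilon_1^{pr} \leq |H(\tilde\mu)-\hat H(\tilde\mu)|$. Substituting the displayed identity on the left-hand side gives $\Delta_3(\tilde\mu) - \delta_3 - \varepsilon_1^{pr} \leq |H(\tilde\mu)-\hat H(\tilde\mu)|$, which is exactly the stated lower bound. So the lower bound is essentially a rearrangement rather than a new estimate.

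The proof is therefore largely bookkeeping: the only non-automatic step is the add/subtract manoeuvre in the upper bound, and even that is a direct transcription of the argument used in~(\ref{eq:Ubound}). The one small wrinkle worth double-checking is the definition of $\varepsilon_3$: following the template strictly, the residual that naturally appears in $\varepsilon_3$ is the primal-residual-system residual $r_{r_{pr}}(\tilde\mu)$ rather than $r_{pr}(\tilde\mu)$, and I would adopt that reading (or else remark that both quantities become small whenever $\hat x_{du}$ is a good approximation of $x_{du}$ on a small right-hand side). I do not anticipate any genuine obstacle; the main care is simply in keeping track of which residual and which approximate state are being used where, since Variants 1--3 juggle four systems (primal, dual, primal-residual, dual-residual) and their reduced counterparts.
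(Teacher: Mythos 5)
Your proposal is correct and takes essentially the same route as the paper, whose proof is itself only a one-line pointer to~(\ref{error_esti3}), the identity $\Delta_3(\tilde\mu)-\delta_3=\Delta_1^{pr}(\tilde\mu)$, the lower bound of Theorem~\ref{theo:Ubound1pr}, and the add--subtract template of Theorem~\ref{theo:Ubound2}. Your ``wrinkle'' is also well spotted: following that template the perturbation in the upper bound is $|(x_{du}(\tilde\mu)-\hat x_{du}(\tilde\mu))^Tr_{r_{pr}}(\tilde\mu)|$, so the $r_{pr}$ appearing in the stated definition of $\varepsilon_3$ looks like a typo carried over from $\varepsilon_1$.
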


\begin{proof}
The result can be obtained by using~(\ref{error_esti3}), the relation between $\Delta_3(\tilde \mu)$ and $\Delta_1^{pr}(\tilde \mu)$, and the lower bound of Theorem~\ref{theo:Ubound1pr}, then following similar steps as in the proof of Theorem~\ref{theo:Ubound2}.\qed
\end{proof}
Analogously, $\varepsilon_3$ is also a small number, since $\hat x_{du}(\tilde \mu)$ is close enough to $x_{du}(\tilde \mu)$ if it is a good approximation computed from the ROM of the dual system. 
\subsection {Variant 4}

In~(\ref{error_esti3}), if we consider $Q^{-1}r_{r_{pr}}$ and seek the solution to
the primal-residual-residual system,
\begin{equation}
\label{eq:prim_resi_resi}
Q(\tilde \mu)x_{r_{rpr}}(\tilde \mu)=r_{r_{pr}}(\tilde \mu),
\end{equation}
then the error bound in~(\ref{error_esti3}) becomes
\begin{equation}
\label{error_esti3_pr}
\begin{array}{rcl}
|H(\tilde \mu)-\hat H(\tilde \mu)|&\leq& 
\Delta_1^{pr}(\tilde \mu)+|C(\mu) x_{r_{rpr}}(\tilde \mu)|.
\end{array}
\end{equation}
Certainly, we can compute the ROM of~(\ref{eq:prim_resi_resi}),
\begin{equation}
\label{eq:prim_resi_resi_redu}
W_{r_{rpr}}^TQ(\tilde \mu)V_{r_{rpr}} z_{r_{rpr}}(\tilde \mu)=W_{r_{rpr}}^Tr_{r_{pr}}(\tilde \mu),
\end{equation}
and replace  
$x_{r_{rpr}}(\tilde \mu)$ in~(\ref{error_esti3_pr}) with its approximation $\hat  x_{r_{rpr}}(\tilde \mu)=V_{r_{rpr}}z_{r_{rpr}}(\tilde \mu)$ computed from the ROM. Finally, we get the error estimator as below,
 $$|H(\tilde \mu)-\hat H(\tilde \mu)|\lesssim |\Delta_1^{pr}(\tilde \mu)|+|C(\mu)\hat x_{r_{rpr}}(\tilde \mu)|=:\Delta_3^{pr}(\tilde \mu).$$
From~(\ref{error_esti3_pr}), we can get the following lower and upper bound using the error estimator $\Delta_3^{pr}(\tilde \mu)$.
\begin{theorem}
\label{theo:Ubound4}
The error of the reduced transfer function $\hat H(\tilde \mu)$ can be bounded as
\begin{equation}
\label{eq:bound4}
\Delta_3^{pr}(\tilde \mu)-\delta_3^{pr}-\varepsilon_1^{pr}\leq|H(\tilde \mu)-\hat H(\tilde \mu)| \leq \Delta_3^{pr}(\tilde \mu)+\varepsilon_3^{pr}
\end{equation}
where $\varepsilon_3^{pr}:=|C(\mu)(x_{r_{rpr}}(\tilde \mu)-\hat x_{r_{rpr}}(\tilde \mu))|\geq 0$ and $\delta_3^{pr}:=|C(\mu) \hat x_{r_{rpr}}(\tilde \mu)|$. 
\end{theorem}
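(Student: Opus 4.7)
The plan is to mirror the pattern used for Theorem~\ref{theo:Ubound2} and Theorem~\ref{theo:Ubound3}: take the already-established bound~(\ref{error_esti3_pr}) as the starting point for the upper estimate, and then recover the lower estimate by exploiting the algebraic relation between $\Delta_3^{pr}(\tilde\mu)$ and $\Delta_1^{pr}(\tilde\mu)$ together with the lower bound in Theorem~\ref{theo:Ubound1pr}.

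For the upper bound, I would insert $|C(\mu)\hat x_{r_{rpr}}(\tilde\mu)|$ as a zero-sum term into~(\ref{error_esti3_pr}) and regroup:
\begin{equation*}
|H(\tilde\mu)-\hat H(\tilde\mu)|\;\leq\;\Delta_1^{pr}(\tilde\mu)+|C(\mu)\hat x_{r_{rpr}}(\tilde\mu)|+|C(\mu)x_{r_{rpr}}(\tilde\mu)|-|C(\mu)\hat x_{r_{rpr}}(\tilde\mu)|.
\end{equation*}
The first two terms combine into $\Delta_3^{pr}(\tilde\mu)$, and applying the reverse triangle inequality to the remaining pair yields a tail of at most $|C(\mu)(x_{r_{rpr}}(\tilde\mu)-\hat x_{r_{rpr}}(\tilde\mu))|=\varepsilon_3^{pr}$, which is exactly what is wanted.

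For the lower bound, the key identity is that by the very definition of $\Delta_3^{pr}(\tilde\mu)$ and $\delta_3^{pr}$ one has $\Delta_3^{pr}(\tilde\mu)-\delta_3^{pr}=\Delta_1^{pr}(\tilde\mu)$. The lower bound in Theorem~\ref{theo:Ubound1pr} then gives $\Delta_1^{pr}(\tilde\mu)-\varepsilon_1^{pr}\leq |H(\tilde\mu)-\hat H(\tilde\mu)|$, so substituting yields $\Delta_3^{pr}(\tilde\mu)-\delta_3^{pr}-\varepsilon_1^{pr}\leq|H(\tilde\mu)-\hat H(\tilde\mu)|$, closing the chain.

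There is really no hard step here; the proof is essentially bookkeeping using (i) the bound~(\ref{error_esti3_pr}), (ii) the definition $\Delta_3^{pr}=\Delta_1^{pr}+\delta_3^{pr}$, and (iii) Theorem~\ref{theo:Ubound1pr}. The only point that deserves care is making sure the reverse-triangle-inequality step is applied in the right direction on the absolute-value terms, since the quantities involved are scalars in $\mathbb{C}$ and one must not accidentally drop a sign when rewriting $|C(\mu)x_{r_{rpr}}|-|C(\mu)\hat x_{r_{rpr}}|$ as $|C(\mu)(x_{r_{rpr}}-\hat x_{r_{rpr}})|$. Because the structure is so close to Theorem~\ref{theo:Ubound2}, the author can (and in fact does) discharge the argument by reference.
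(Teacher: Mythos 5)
Your proposal is correct and follows exactly the route the paper intends: the paper's own proof of Theorem~\ref{theo:Ubound4} simply refers to~(\ref{error_esti3_pr}) and to ``similar steps as in the proof of Theorem~\ref{theo:Ubound3}'' (which in turn mirrors Theorem~\ref{theo:Ubound2}), and that is precisely the add-and-subtract plus reverse-triangle-inequality argument for the upper bound and the identity $\Delta_3^{pr}(\tilde\mu)-\delta_3^{pr}=\Delta_1^{pr}(\tilde\mu)$ combined with Theorem~\ref{theo:Ubound1pr} for the lower bound that you describe. No gaps.
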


\begin{proof}
The result can be obtained by using~(\ref{error_esti3_pr}) and following similar steps as in the proof of Theorem~\ref{theo:Ubound3}.\qed
\end{proof}
%The constant $\varepsilon_4$ can also be a very small number if $\hat x_{r_{rpr}}(\tilde \mu)$  approximates $x_{r_{rpr}}(\tilde \mu)$ well. 

\subsection{Relations among the error estimators}
\label{sec:relation}

In this section we explore relations among the error estimators discussed in the previous two sections and present the following propositions.

\begin{proposition}
\label{prop:VduV}
If $W_{du}=V$, and $V_{du}=W$, then $\Delta_1(\tilde \mu)=0$.
\end{proposition}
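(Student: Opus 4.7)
The plan is to unfold the definition $\Delta_1(\tilde\mu) = |\hat x_{du}^T(\tilde\mu) r_{pr}(\tilde\mu)|$ and reduce it to a Galerkin-type orthogonality between the right projection basis $W$ of the primal ROM and the primal residual $r_{pr}(\tilde\mu)$.

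First, I would insert $\hat x_{du}(\tilde\mu) = V_{du} z_{du}(\tilde\mu)$ into $\Delta_1$, giving
\[
\Delta_1(\tilde\mu) = |z_{du}^T(\tilde\mu)\, V_{du}^T r_{pr}(\tilde\mu)|.
\]
Using the hypothesis $V_{du} = W$, this becomes $|z_{du}^T(\tilde\mu)\, W^T r_{pr}(\tilde\mu)|$, so it suffices to show the Galerkin orthogonality $W^T r_{pr}(\tilde\mu) = 0$ independently of $z_{du}(\tilde\mu)$.

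The remaining step is a direct calculation using the definition~(\ref{eq:prim_resi}) of the primal residual together with the reduced primal equation~(\ref{eq:primal_redu}):
\[
W^T r_{pr}(\tilde\mu) = W^T B(\mu) - W^T Q(\tilde\mu) V z_{pr}(\tilde\mu) = \hat B(\mu) - \hat Q(\tilde\mu) z_{pr}(\tilde\mu) = 0,
\]
since $z_{pr}(\tilde\mu)$ solves $\hat Q(\tilde\mu) z_{pr}(\tilde\mu) = \hat B(\mu)$ by construction. Chaining the two displays yields $\Delta_1(\tilde\mu) = 0$.

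There is no real obstacle here; the argument is just the standard Petrov-Galerkin orthogonality of the primal residual against $W$. I would add a short remark noting that the conclusion in fact uses only $V_{du} = W$; the second hypothesis $W_{du} = V$ is not needed for $\Delta_1$ to vanish, because it only affects the value of $z_{du}(\tilde\mu)$, which is ultimately contracted with the zero vector $W^T r_{pr}(\tilde\mu)$.
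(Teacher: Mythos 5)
Your proof is correct, and it takes a genuinely cleaner route than the paper's. The paper substitutes the full closed form $\hat x_{du}=V_{du}(W_{du}^TQ^TV_{du})^{-1}W_{du}^TC^T$ into $\Delta_1$, splits the result into the two terms $\hat x_{du}^TB(\mu)$ and $\hat x_{du}^TQ(\tilde\mu)V\hat Q^{-1}(\tilde\mu)\hat B(\mu)$, and verifies by explicit matrix algebra that both reduce to $C(\mu)V(W^TQ(\tilde\mu)V)^{-1}W^TB(\mu)$ — a computation that invokes both hypotheses $W_{du}=V$ and $V_{du}=W$. You instead keep only the factorization $\hat x_{du}=V_{du}z_{du}$ and observe that $V_{du}^Tr_{pr}=W^Tr_{pr}=0$ is exactly the Petrov--Galerkin orthogonality encoded in the reduced primal equation~(\ref{eq:primal_redu}). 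This isolates the actual mechanism behind the vanishing of $\Delta_1$, and it yields the sharper statement that $V_{du}=W$ alone suffices (the hypothesis $W_{du}=V$ only influences $z_{du}$, which is contracted against a zero vector) — a strengthening the paper's calculation obscures. The only implicit assumption, shared with the paper, is that the reduced dual matrix $\hat Q_{du}$ is invertible so that $z_{du}$ is defined.
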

\begin{proof}
\begin{equation}
\label{eq:est1_analy}
\begin{array}{rcl}
\Delta_1(\tilde \mu)&=&|\hat x^T_{du}(\tilde \mu)r_{pr}(\tilde \mu)| \\
&=&|\hat x^T_{du}(\tilde \mu)(B(\mu)-Q(\tilde \mu)V(W^TQ(\tilde \mu)V)^{-1}W^TB(\mu)| \\
&=&|\hat x^T_{du}(\tilde \mu)B(\mu)-\hat x^T_{du}(\tilde \mu)Q(\tilde \mu)V(W^TQ(\tilde \mu)V)^{-1}W^TB(\mu)|.
\end{array}
\end{equation}
The first part of the last equation in~(\ref{eq:est1_analy}) is
\begin{equation}
\label{eq:est1_analy1}
\begin{array}{rcl}
\hat x^T_{du}(\tilde \mu)B(\mu)&=& [V_{du}(W_{du}^TQ^T(\tilde \mu)V_{du})^{-1}W_{du}^TC^T(\mu)]^TB(\mu)\\
&=& C(\mu)V(W^TQ(\tilde \mu)V)^{-1}W^TB(\mu)  \ (\textrm{if} \ W_{du}=V  \ \textrm{and} \ V_{du}=W). \\
\end{array}
\end{equation}
If $W_{du}=V$ and $V_{du}=W$, the second part of the last equation in~(\ref{eq:est1_analy}) becomes
\begin{equation}
\label{eq:est1_analy2}
\begin{array}{rcl}
\hat x^T_{du}(\tilde \mu)V(W^TQ(\tilde \mu)V)^{-1}W^TB(\mu)&=& [V_{du}(W_{du}^TQ^T(\tilde \mu)V_{du})^{-1}W_{du}^TC^T(\mu)]^TQ(\tilde \mu)V(W^TQ(\tilde \mu)V)^{-1}W^TB(\mu)\\
&=& C(\mu)V(W^TQ(\tilde \mu)V)^{-1}W^TQ(\tilde \mu)V(W^TQ(\tilde \mu)V)^{-1}W^TB(\mu) \\
&=&C(\mu)V(W^TQ(\tilde \mu)V)^{-1}W^TB(\mu). \\
\end{array}
\end{equation}
Comparing~(\ref{eq:est1_analy1}) and (\ref{eq:est1_analy2}), we get the conclusion. 
\qed
\end{proof}

\begin{remark}
\label{rem:VduV}
Proposition~\ref{prop:VduV} points out that if $W_{du}=V$ and $V_{du}=W$, then $\Delta_1(\tilde \mu)$ is always zero, and cannot be a good error estimator. This is not the case for 
most problems. However, if the system is symmetric, i.e., $Q(\tilde \mu)=Q^T(\tilde \mu)$, and $B(\mu)=C^T(\mu)$, this will likely happen, since in this case, the primal system and the dual system are identical. We will show later that for systems which are almost symmetric, i.e. $Q(\tilde \mu)\approx Q^T(\tilde \mu)$ and/or $B(\mu) \approx C^T(\mu)$, $\Delta_1(\tilde \mu)$ also behaves badly.  One possibility of avoiding $\Delta_1(\tilde \mu)$ being zero or improving the performance of $\Delta_1(\tilde \mu)$ is to construct $(W_{du}, V_{du})$ and $(W,V)$ from different subspaces of the solution (state) manifold.  More specifically, when using time domain methods, different snapshots should be chosen for $(W_{du}, V_{du})$ and $(W,V)$, respectively; or different expansion points should be taken if using frequency domain methods, e.g., moment-matching. 
\end{remark}
\begin{remark}
Using Galerkin projection, i.e. $W=V$, $W_{du}=V_{du}$, then $V_{du}=V$ leads to $\Delta_1(\tilde \mu)=0$. 
\end{remark}
\begin{proposition}
\label{prop:VdurVdu}
If $W_{r_{du}}=W_{du}$, then the second part of $\Delta_2(\tilde \mu)$ is always zero, i.e. $|\hat x^T_{r_{du}}(\tilde \mu)r_{pr}(\tilde \mu)|=0$.
\end{proposition}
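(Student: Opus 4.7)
The plan is to exploit a Galerkin-type orthogonality property of the dual residual with respect to the left projection matrix $W_{du}$, and then observe that once the right-hand side of the reduced dual-residual system vanishes, so does its solution.

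First I would write down the defining equation of $\hat{x}_{du}(\tilde\mu)$. Since $z_{du}(\tilde\mu)$ solves the reduced dual system~(\ref{eq:dual_redu}), we have $z_{du}(\tilde\mu) = (W_{du}^T Q^T(\tilde\mu) V_{du})^{-1} W_{du}^T C^T(\mu)$, and consequently $W_{du}^T Q^T(\tilde\mu) V_{du} z_{du}(\tilde\mu) = W_{du}^T C^T(\mu)$. Rearranging and using the definition of the dual residual~(\ref{eq:dual_resi}), one gets
\begin{equation*}
W_{du}^T r_{du}(\tilde\mu) \;=\; W_{du}^T C^T(\mu) - W_{du}^T Q^T(\tilde\mu) V_{du} z_{du}(\tilde\mu) \;=\; 0.
\end{equation*}
This is the key observation, and it is simply the standard Petrov--Galerkin orthogonality for the projected dual system.

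Next I would feed this into the reduced dual-residual system~(\ref{eq:redu_dual_resi}). By definition, the right-hand side equals $\tilde r_{du}(\tilde\mu) = W_{r_{du}}^T r_{du}(\tilde\mu)$. Under the hypothesis $W_{r_{du}} = W_{du}$, the previous identity yields $\tilde r_{du}(\tilde\mu) = W_{du}^T r_{du}(\tilde\mu) = 0$. Assuming the reduced matrix $\tilde Q(\tilde\mu) = W_{r_{du}}^T Q^T(\tilde\mu) V_{r_{du}}$ is invertible (which is implicit in the very definition of $\hat x_{r_{du}}$), this forces $z_{r_{du}}(\tilde\mu) = 0$, whence $\hat x_{r_{du}}(\tilde\mu) = V_{r_{du}} z_{r_{du}}(\tilde\mu) = 0$. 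Therefore $|\hat x_{r_{du}}^T(\tilde\mu) r_{pr}(\tilde\mu)| = 0$ trivially.

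There is no real obstacle in this argument; the only subtle point worth flagging is the invertibility of $\tilde Q(\tilde\mu)$, which is already being taken for granted whenever $\hat x_{r_{du}}$ is referred to, and the fact that the result is completely independent of the choice of $V_{r_{du}}$ and $V_{du}$ -- only the left projector $W_{du}$ matters. This makes the proposition essentially a restatement of the Petrov--Galerkin orthogonality of the residual, which is a natural warning: choosing the left basis of the reduced dual-residual system to coincide with that of the reduced dual system destroys the corrective second term in $\Delta_2(\tilde\mu)$.
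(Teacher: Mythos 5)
Your proof is correct and rests on exactly the same key identity as the paper's, namely the Galerkin orthogonality $W_{du}^T r_{du}(\tilde\mu)=0$ implied by the reduced dual system~(\ref{eq:dual_redu}); the only cosmetic difference is that you conclude the stronger statement $\hat x_{r_{du}}(\tilde\mu)=0$ (since the reduced right-hand side vanishes), whereas the paper only exhibits the vanishing factor $r_{du}^T(\tilde\mu)W_{du}$ inside the inner product. Both arguments are essentially identical in substance.
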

\begin{proof}
\begin{equation}
\label{eq:delta2_e2}
\begin{array}{rcl}
\hat x^T_{r_{du}}(\tilde \mu)r_{pr}(\tilde \mu)&=&[V_{r_{du}}\tilde Q^{-1}(\tilde \mu)(W_{r_{du}}^Tr_{du}(\tilde \mu))]^Tr_{pr}(\tilde \mu)\\
&=&r_{du}^T(\tilde \mu)W_{du}\tilde Q^{-T}(\tilde \mu) V_{r_{du}}^Tr_{pr}(\tilde \mu) \  (\textrm{if} \  W_{r_{du}}=W_{du}).
\end{array}
\end{equation}
Considering the first two terms in the last equation, we get
\begin{equation}
\label{eq:delta2_e2_1}
\begin{array}{rcl}
(r_{du}^T(\tilde \mu)W_{du})^T&=&W_{du}^T(C^T(\mu)-Q^T(\tilde \mu)V_{du}z_{du}(\tilde \mu))\\
&=&0 \quad (\textrm{due to}\ (\ref{eq:dual_redu})).
\end{array}
\end{equation}
\qed
\end{proof}
\begin{remark}
Proposition~\ref{prop:VdurVdu} points out that if $W_{r_{du}}=W_{du}$, then $\Delta_2(\tilde \mu)$ reduces to $\Delta_1(\tilde \mu)$, and cannot be more robust than  $\Delta_1(\tilde \mu)$. Therefore, $W_{r_{du}}$ should be carefully constructed. In case of Galerkin projection, i.e. $W_{r_{du}}=V_{r_{du}}$, and $W_{du}=V_{du}$, then $V_{r_{du}}=V_{du}$ leads to the same result in Proposition~\ref{prop:VdurVdu}.
\end{remark}
\begin{proposition}
\label{prop:VrprV}
If $W_{r_{pr}}=W$, then $\hat x_{r_{pr}}(\tilde \mu)=0$.
\end{proposition}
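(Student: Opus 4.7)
The plan is to show that under the hypothesis $W_{r_{pr}}=W$, the right-hand side of the reduced primal-residual system \eqref{eq:redu_prim_resi} vanishes identically, which forces $z_{r_{pr}}(\tilde\mu)=0$ and hence $\hat x_{r_{pr}}(\tilde\mu)=V_{r_{pr}}z_{r_{pr}}(\tilde\mu)=0$. So the whole proof reduces to establishing $W^{T}r_{pr}(\tilde\mu)=0$.

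First I would unfold the definition of the primal residual from \eqref{eq:prim_resi}, namely $r_{pr}(\tilde\mu)=B(\mu)-Q(\tilde\mu)\hat x_{pr}(\tilde\mu)$ with $\hat x_{pr}(\tilde\mu)=Vz_{pr}(\tilde\mu)$, and left-multiply by $W^{T}$:
\begin{equation*}
W^{T}r_{pr}(\tilde\mu) \;=\; W^{T}B(\mu) - W^{T}Q(\tilde\mu)V\,z_{pr}(\tilde\mu) \;=\; \hat B(\mu) - \hat Q(\tilde\mu)\,z_{pr}(\tilde\mu).
\end{equation*}
By the reduced primal system \eqref{eq:primal_redu}, $\hat Q(\tilde\mu)z_{pr}(\tilde\mu)=\hat B(\mu)$, so this expression is zero. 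This is essentially the standard Petrov--Galerkin orthogonality between the residual and the test space $W$.

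Plugging $W_{r_{pr}}=W$ into the reduced primal-residual system \eqref{eq:redu_prim_resi} then gives
\begin{equation*}
\bigl(W^{T}Q(\tilde\mu)V_{r_{pr}}\bigr)\,z_{r_{pr}}(\tilde\mu) \;=\; W^{T}r_{pr}(\tilde\mu) \;=\; 0,
\end{equation*}
so assuming the reduced matrix $W^{T}Q(\tilde\mu)V_{r_{pr}}$ is nonsingular (which is implicit in \eqref{eq:redu_prim_resi} being a well-posed ROM), we obtain $z_{r_{pr}}(\tilde\mu)=0$ and hence $\hat x_{r_{pr}}(\tilde\mu)=V_{r_{pr}}z_{r_{pr}}(\tilde\mu)=0$.

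There is no real obstacle here; the claim is a direct consequence of the Petrov--Galerkin orthogonality property of the primal ROM. The only subtlety worth noting is the implicit well-posedness assumption on the reduced operator, which is standard throughout the paper and needed for $\hat x_{r_{pr}}(\tilde\mu)$ to even be defined.
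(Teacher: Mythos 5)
Your proof is correct and follows essentially the same route as the paper: both arguments reduce to the Petrov--Galerkin orthogonality $W^{T}r_{pr}(\tilde\mu)=\hat B(\mu)-\hat Q(\tilde\mu)z_{pr}(\tilde\mu)=0$ implied by the reduced primal system, and then conclude $\hat x_{r_{pr}}(\tilde\mu)=0$ via the (implicitly invertible) reduced primal-residual operator. The only difference is presentational; the paper writes out the explicit solution formula for $\hat x_{r_{pr}}(\tilde\mu)$ before substituting, whereas you first annihilate the right-hand side.
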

\begin{proof}
From the ROM of the primal-residual system in~(\ref{eq:redu_prim_resi}),
\begin{equation}
\label{eq:hatxrpr}
\begin{array}{rcl}
\hat x_{r_{pr}}(\tilde \mu)&=&V_{r_{pr}}(W_{r_{pr}}^TQ(\tilde \mu)V_{r_{pr}})^{-1}(W_{r_{pr}}^Tr_{pr}(\tilde \mu))\\
&=&V_{r_{pr}}(W_{r_{pr}}^TQ(\tilde \mu)V_{r_{pr}})^{-1}(W^Tr_{pr}(\tilde \mu))  \quad (\textrm{if} \ W_{r_{pr}}=W)\\
&=&V_{r_{pr}}(W_{r_{pr}}^TQ(\tilde \mu)V_{r_{pr}})^{-1}W^T(B(\mu)-Q(\tilde \mu)Vz_{pr}(\tilde \mu))  \\
&=&V_{r_{pr}}(W_{r_{pr}}^TQ(\tilde \mu)V_{r_{pr}})^{-1}[W^TB(\mu)-W^TQ(\tilde \mu)Vz_{pr}(\tilde \mu)] \\
&=&0. \quad (\textrm{due to}\ (\ref{eq:primal_redu}) ).
\end{array}
\end{equation}
\qed
\end{proof}
\begin{remark}
Proposition~\ref{prop:VrprV} implicates that if $W_{r_{pr}}=W$, then the second part of $\Delta_2^{pr}(\tilde \mu)$ is always zero, i.e. $|r_{du}^T(\tilde \mu) \hat x_{r_{pr}}(\tilde \mu)|=0$ , and $\Delta_2^{pr}(\tilde \mu)$ equals to $\Delta_1(\tilde \mu)$. Also, $\hat x_{r_{pr}}(\tilde \mu)=0$ makes $\Delta_1^{pr}(\tilde \mu)$ zero, meaning the first part of $\Delta_3(\tilde\mu)$ and the first part of $\Delta_3^{pr}(\tilde \mu)$ are all zeros. Therefore, $W_{r_{pr}}$ should also be carefully constructed to avoid being equal to $W$. For Galerkin projection, i.e. $W_{r_{pr}}=V_{r_{pr}}$ and $W=V$, 
Proposition~\ref{prop:VrprV} reads: If $V_{r_{pr}}=V$, then $\hat x_{r_{pr}}(\tilde \mu)=0$.
\end{remark}

\begin{proposition}
\label{prop:VrrprV}
If $W_{r_{rpr}}=W_{r_{pr}}$, then $\hat x_{r_{rpr}}(\tilde \mu)=0$.
\end{proposition}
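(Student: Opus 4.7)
The plan is to mirror the argument used in Proposition~\ref{prop:VrprV}: expand $\hat x_{r_{rpr}}(\tilde\mu)$ via the ROM~(\ref{eq:prim_resi_resi_redu}), substitute the definition of the residual $r_{r_{pr}}(\tilde\mu)=r_{pr}(\tilde\mu)-Q(\tilde\mu)\hat x_{r_{pr}}(\tilde\mu)$ into its right-hand side, impose the hypothesis $W_{r_{rpr}}=W_{r_{pr}}$, and then invoke the defining Galerkin/Petrov--Galerkin condition of the reduced primal-residual system~(\ref{eq:redu_prim_resi}) to annihilate the result.

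Concretely, I would first write
\[
\hat x_{r_{rpr}}(\tilde\mu)=V_{r_{rpr}}\bigl(W_{r_{rpr}}^TQ(\tilde\mu)V_{r_{rpr}}\bigr)^{-1}W_{r_{rpr}}^Tr_{r_{pr}}(\tilde\mu),
\]
so that it suffices to show $W_{r_{rpr}}^Tr_{r_{pr}}(\tilde\mu)=0$. Substituting $r_{r_{pr}}(\tilde\mu)=r_{pr}(\tilde\mu)-Q(\tilde\mu)V_{r_{pr}}z_{r_{pr}}(\tilde\mu)$ and using $W_{r_{rpr}}=W_{r_{pr}}$, the product becomes
\[
W_{r_{pr}}^Tr_{pr}(\tilde\mu)-W_{r_{pr}}^TQ(\tilde\mu)V_{r_{pr}}z_{r_{pr}}(\tilde\mu),
\]
which vanishes by the very equation~(\ref{eq:redu_prim_resi}) that defines $z_{r_{pr}}(\tilde\mu)$. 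Hence $z_{r_{rpr}}(\tilde\mu)=0$ and consequently $\hat x_{r_{rpr}}(\tilde\mu)=0$.

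Conceptually the proof is a direct analogue of Proposition~\ref{prop:VrprV} in one layer deeper of the residual cascade, so I do not expect any genuine obstacle. The only point requiring a small amount of care is the implicit assumption that the reduced matrix $W_{r_{rpr}}^TQ(\tilde\mu)V_{r_{rpr}}$ is invertible (so that $z_{r_{rpr}}(\tilde\mu)$ is well-defined); this is a standing assumption throughout the section and need not be re-proved. I would finish with a remark, parallel to those following Propositions~\ref{prop:VduV}--\ref{prop:VrprV}, pointing out that the hypothesis $W_{r_{rpr}}=W_{r_{pr}}$ forces the second summand of $\Delta_3^{pr}(\tilde\mu)$ to vanish and therefore must be avoided when choosing the test space for the reduced primal-residual-residual system.
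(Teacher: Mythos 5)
Your proof is correct and is essentially identical to the paper's: both expand $\hat x_{r_{rpr}}(\tilde\mu)$ via the ROM~(\ref{eq:prim_resi_resi_redu}), reduce the claim to $W_{r_{rpr}}^Tr_{r_{pr}}(\tilde\mu)=0$, and obtain this by substituting the hypothesis $W_{r_{rpr}}=W_{r_{pr}}$ together with the defining equation~(\ref{eq:redu_prim_resi}) of $z_{r_{pr}}(\tilde\mu)$. The closing observation about the second summand of $\Delta_3^{pr}(\tilde\mu)$ matches the remark the paper places after the proposition.
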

\begin{proof}
From the ROM of the primal-residual-residual system in~(\ref{eq:prim_resi_resi_redu}),
\begin{equation}
\label{eq:hatxrrpr}
\begin{array}{rcl}
\hat x_{r_{rpr}}(\tilde \mu)&=&V_{r_{rpr}}(W_{r_{rpr}}^TQ(\tilde \mu)V_{r_{rpr}})^{-1}(W_{r_{rpr}}^Tr_{r_{pr}}(\tilde \mu)).\\
\end{array}
\end{equation}
The last tow terms of the right-hand side of~(\ref{eq:hatxrrpr}) are
\begin{equation*}
\label{eq:hatxrrpr1}
\begin{array}{rcl}
W_{r_{rpr}}^Tr_{r_{pr}}(\tilde \mu)&=&W_{r_{pr}}^Tr_{r_{pr}}(\tilde \mu) \quad (\textrm{if} \ W_{r_{rpr}}=W_{r_{pr}}) \\
&=&W_{r_{pr}}^T(r_{pr}(\tilde \mu)-QV_{r_{pr}}z_{r_{pr}}(\tilde \mu))\\
&=&0 \quad (\textrm{due to}\ (\ref{eq:redu_prim_resi}) ).
\end{array}
\end{equation*}\qed
\end{proof}
\begin{remark}
From Proposition~\ref{prop:VrrprV}, we see that if $W_{r_{rpr}}=W_{r_{pr}}$, then the second part of $\Delta_3^{pr}(\tilde \mu)$ is always zero, i.e. $|C(\mu) \hat x_{r_{rpr}}(\tilde \mu)|=0$ , and is no better than $\Delta_1^{pr}(\tilde \mu)$ in underestimating the true error. Similarly, in case of Galerkin projection,  i.e. $W_{r_{pr}}=V_{r_{pr}}$ and $W_{r_{rpr}}=V_{r_{rpr}}$, 
Proposition~\ref{prop:VrrprV} reads: If $V_{r_{rpr}}=V_{r_{pr}}$, then $\hat x_{r_{rpr}}(\tilde \mu)=0$.
\end{remark}

\subsection{Constructing projection matrices for the ROMs}
\label{subsec:construct_ROMs}
The key components for computing the error estimators are the projection matrix pairs $(W,V)$, $(W_{du},V_{du})$, $(W_{r_{du}},V_{r_{du}})$ or $(W_{r_{pr}},V_{r_{pr}})$, $(W_{r_{rpr}},V_{r_{rpr}})$ which are used to construct the reduced systems in~(\ref{eq:primal_redu}), (\ref{eq:dual_redu}), (\ref{eq:redu_dual_resi}) or in~(\ref{eq:redu_prim_resi}), (\ref{eq:prim_resi_resi_redu}), respectively. For simplicity and clarity of analysis, we only use Galerkin projection for all the reduced systems, so that only one projection matrix $V, V_{du}$, $V_{r_{du}}$ or $V_{r_{pr}}$, $V_{r_{rpr}}$ needs to be computed for each reduced system. The analysis in this subsection can be extended to Petrov-Galerkin projection without many difficulties and could be addressed in a future work.

By definition of the reduced primal system~(\ref{eq:primal_redu}), $V$ is also the projection matrix for constructing the ROM of the original model. 
Since the proposed error estimator does not depend on the MOR method, $V$ can be computed either using time-domain MOR methods, such as the reduced basis (RB) method, the proper orthogonal decomposition (POD) method~\cite{morBenGW15, morBauBHetal17}, which use the snapshots in time domain (trajectories of the state vector $x$) to obtain $V$ or using frequency domain methods, such as multi-moment-matching~\cite{morFenB14}. 

The dual system~(\ref{eq:dual}), the dual-residual system~(\ref{eq:dual_resisys}), as well as the primal-residual system~(\ref{eq:prim_resisys}), the primal-residual-residual system~(\ref{eq:prim_resi_resi}) are parametric systems in frequency domain, with $\tilde \mu=s$ or $\tilde \mu=(\mu, s)$ being the vector of parameters. 
Similarly, we can compute the projection matrices for MOR of these systems either through snapshot based methods, or the multi-moment-matching method. The snapshots do not represent the trajectory of the solution in time domain, instead, they are the solution vectors at different samples of the parameter $\tilde \mu$. 

In order to be consistent with the previous work in~\cite{morFenAB17, morFenB19b}, and to be comparable with existing results, we apply the frequency domain method, i.e., the multi-moment-matching method~\cite{morFenB14} to derive the ROMs for all the systems contributing to the error estimator. To be self-contained, we also review the construction of $
V, V_{du}$ and $V_{r_{du}}$, though it is detailed in~\cite{morFenB19b}. It is illustrated in~\cite{morFenB19b} that the reduced basis method can be seen as a special case of the multi-moment-matching method for systems in frequency domain. 

\subsubsection{Constructing $V$ using the multi-moment-matching method~\cite{morFenB14}}

When using the multi-moment-matching method proposed in~\cite{morFenB14} to construct the ROM, then $V$ can be computed as follows. We first consider the state vector $x(t,\mu)$ in frequency domain, i.e., the state vector $x(\tilde \mu)$ of the 
primal system. Assume that $Q(\tilde \mu)$ has the following affine decomposition %
$$Q(\tilde \mu)=Q_0+h_1(\tilde \mu) Q_1+\ldots+h_p(\tilde \mu) Q_p,$$
where $h_j(\tilde \mu): \mathbb C^m \mapsto \mathbb C, j=1,\ldots,p$ are scalar functions of $\tilde \mu$. From the series expansion of $x(\tilde \mu)$,
\begin{equation}
\label{eq:x_expan}
\begin{array}{rcl}
 x(\tilde \mu)&=&[Q(\tilde \mu)]B(\mu)\\
&=&[Q_0+h_1(\tilde \mu)Q_1+\ldots+h_p(\tilde \mu)Q_p]^{-1}B(\mu)\\
&=&[I-(\sigma_1M_1+\ldots +\sigma_pM_p)]^{-1}B_M\\
 &=&\sum\limits_{k=0}^{\infty}(\sigma_1M_1+\ldots+\sigma_pM_p)^kB_M,
\end{array}
\end{equation}
where $\sigma_j=h_j(\tilde \mu)-h_j(\tilde \mu^i)$, $B_M=[Q(\tilde \mu^i)]^{-1}B(\mu)$, $M_j=-[Q(\tilde \mu^i)]^{-1}Q_j$, $j=1,2,\ldots,p$; $h(\tilde \mu^i):=(h_1(\tilde \mu^i), \ldots, h_p(\tilde \mu^i))$ is the expansion point at which the above power series of $x(\tilde \mu)$ is derived. Since $h(\tilde \mu^i)$ is uniquely determined by $\tilde \mu^i$, we call $\tilde \mu^i$ the expansion point in the following text, for simplicity. There exist recursions between the coefficients of the series expansion as below,
\begin{equation}
\label{moments}
\begin{array}{rcl}
R_0&=&\tilde B_M,\\ \ R_1&=&[M_1R_0,\ldots, M_pR_0], \\
R_2&=&[M_1R_1,\ldots, M_pR_1], \\
\vdots \\
R_q&=&[M_1R_{q-1},\ldots, M_pR_{q-1}],\\
\vdots
\end{array}
\end{equation}
Here, $\tilde B_M=B_M$, if $B(\mu)$ does not depend on $\mu$, i.e. $B(\mu)=B$. Otherwise, $\tilde B_M=[B_{M_1}, \ldots, B_{M_p}]$, $B_{M_j}=[Q(\tilde \mu^i)]^{-1}B_j$, $j=1,\ldots,p$, if $B(\mu)$ can be written in an affine form, e.g., $B(\mu)=B_1 \alpha_1(\mu) +\ldots + B_p \alpha_p(\mu)$, $\alpha_i(\mu): \mathbb C^m \mapsto \mathbb C$.
Then $V_{\tilde \mu^i}$ is computed as
\begin{equation}
\label{eq:Vi}
\mathop{\mathrm{range}}(V_{\tilde \mu^i})=\mathop{\mathrm{span}}\{R_0, R_1,\ldots, R_q \}_{\tilde \mu^i},
\end{equation}
where usually we require $q\leq 1$ to avoid exponential increase of column dimension. The matrix $V_{\tilde \mu^i}$
depends on the expansion point $\tilde \mu^i$. 
Finally, $V$
can be constructed as
\begin{equation}
\label{eq:V}
V=
\textrm{orth}\{V_{\tilde \mu^1},\ldots, V_{\tilde \mu^l}\}.
\end{equation}

\subsubsection{Constructing $V_{du}$ using multi-moment-matching}
If using the multi-moment-matching method, $V_{du}$ can also be constructed similarly as $V$. Considering the dual system in~(\ref{eq:dual}), $x_{du}(\tilde \mu)$ can be written as
\begin{equation}
\label{eq:xdu_expan}
\begin{array}{rcl}
 x_{du}(\tilde \mu)&=&[Q(\tilde \mu)]^{-T}C^T(\mu)\\
&=&[Q_0^T+h_1(\tilde \mu)Q_1^T+\ldots+h_p(\tilde \mu)Q_p^T]^{-1}C^T(\mu)\\
&=&[I-(\sigma_1\tilde M_1+\ldots +\sigma_p \tilde M_p)]^{-1}C_M\\
 &=&\sum\limits_{k=0}^{\infty}(\sigma_1 \tilde M_1+\ldots+\sigma_p \tilde M_p)^k C_M,
\end{array}
\end{equation}
where 
$C_M=[Q(\tilde \mu^i)]^{-T}C^T(\mu)$, $\tilde M_j=-[Q(\tilde \mu^i)]^{-T}Q_j^T$, $j=1,2,\ldots,p$. The recursions between the coefficients of the series expansion in~(\ref{eq:xdu_expan}) are
\begin{equation}
\label{moments}
\begin{array}{rcl}
\tilde R_0&=&\tilde C_M,\\ \ \tilde R_1&=&[\tilde M_1\tilde R_0,\ldots, \tilde M_p \tilde R_0], \\
\tilde R_2&=&[\tilde M_1 \tilde R_1,\ldots, \tilde M_p \tilde R_1], \\
\vdots \\
\tilde R_q&=&[\tilde M_1\tilde R_{q-1},\ldots, \tilde M_p \tilde R_{q-1}],\\
\vdots
\end{array}
\end{equation}
Here, $\tilde C_M=C_M$, if $C(\mu)$ does not depend on $\mu$, i.e. $C(\mu)=C$. Otherwise, $\tilde C_M=[C_{M_1}, \ldots, C_{M_p}]$, $C_{M_i}=[Q(\tilde \mu^i)]^{-1}C_j$, $j=1,\ldots,p$, if $C(\mu)$ can be written in an affine form, e.g., $C(\mu)=C_1 \beta_1(\mu) +\ldots + C_p \beta_p(\mu)$.
Then $V^{du}_{\tilde \mu^i}$ is computed as
\begin{equation}
\label{eq:Vidu}
\mathop{\mathrm{range}}(V^{du}_{\tilde \mu^i})=\mathop{\mathrm{span}}\{\tilde R_0, \tilde R_1,\ldots, \tilde R_q \}_{\tilde \mu^i}. 
\end{equation}
Finally, $V_{du}$
can be constructed as
\begin{equation}
\label{eq:Vdu}
V_{du}=
\textrm{orth}\{V^{du}_{\tilde \mu^1},\ldots, V^{du}_{\tilde \mu^l}\}.
\end{equation}

\subsubsection{Constructing $V_{r_{du}}$}
$V_{r_{du}}$ is used to construct the ROM of the dual-residual system and the error estimator $\Delta_2(\tilde \mu)$.
From the
state vector of the dual-residual system~(\ref{eq:dual_resisys}), we see that
\begin{equation}
\label{eq:xrdu}
\begin{array}{rl}
x_{r_{du}}(\tilde \mu)&=Q^{-T}(\tilde \mu)r_{du}(\tilde \mu)\\
&=Q^{-T}(\tilde \mu)C^T(\mu)-\hat x_{du}(\tilde \mu)\\
&= Q^{-T}(\tilde \mu)C^T(\mu)-V_{du}z_{du}(\tilde \mu),
\end{array}
\end{equation}
where $Q^{-T}(\tilde \mu)C^T( \mu)$ is nothing but the state vector $x_{du}(\tilde \mu)$ of the dual system. 

Considering the series expansion of $x_{du}(\tilde \mu)$ in~(\ref{eq:xdu_expan}), we see that taking the same expansion point as in~(\ref{eq:xdu_expan}), the series expansion leads to the subspace $\textrm{range}(V_{du})$. Finally, $Q^{-T}(\tilde \mu)C^T( \mu)$ in the last equality of~(\ref{eq:xrdu}) provides no new information than $V_{du}$, so that we can use $\textrm{range}(V_{du})$ as the subspace for approximating the trajectory space of $x_{r_{du}}(\tilde \mu)$, i.e. $V_{r_{du}}=V_{du}$.
However, from Proposition~\ref{prop:VdurVdu}, we know that $V_{r_{du}}$ should be different from $V_{du}$. 
Therefore, if we use expansion points different from those used for $V_{du}$ to obtain a second projection matrix $V_{r_{du}}^1$ which is different from $V_{du}$, then the projection matrix $V_{r_{du}}:=\textrm{orth}\{V_{r_{du}}^1, V_{du}\}$ should represent
the trajectory of $x_{r_{du}}(\tilde \mu)$ well. 

$V_{r_{du}}^1$ can be computed using the multi-moment-matching method as in~(\ref{eq:Vidu}) and~(\ref{eq:Vdu}), by choosing expansion points which are different from those used there, i.e.
\begin{equation}
\label{eq:Vidur1}
\textrm{range} (V^{r_{du}}_{\tilde \mu^j})=\mathop{\mathrm{span}}\{\tilde R_0, \tilde R_1,\ldots, \tilde R_q \}_{\tilde \mu^j},
j=1,\ldots,l.
\end{equation}
Finally, 
\begin{equation}
\label{eq:Vdur}
\textrm{range}(V_{r_{du}})=\textrm{orth}\{V^{r_{du}}_{\tilde \mu^1},\ldots,V^{r_{du}}_{\tilde \mu^l}, V_{du}\}
\end{equation}
The $\tilde \mu^j$ in~(\ref{eq:Vidur1}) can be selected by a greedy algorithm searching the maximum of $|\hat x^T_{r_{du}}(\tilde \mu)r_{pr}(\tilde \mu)|$, the first part of $\Delta_2(\tilde \mu)$ associated with $\hat x_{r_{du}}$, and are usually different from $\tilde \mu^i$ used for computing $V_{du}$. 

\subsubsection{Constructing $V_{r_{pr}}$}
From the
state vector of the primal-residual system~(\ref{eq:prim_resisys}), we get
\begin{equation}
\label{eq:xrpr}
\begin{array}{rl}
x_{r_{pr}}(\tilde \mu)&=Q^{-1}(\tilde \mu)r_{pr}(\tilde \mu)\\
&=Q^{-1}(\tilde \mu)B(\mu)-\hat x_{pr}(\tilde \mu)\\
&= Q^{-1}(\tilde \mu)B(\mu)-Vz_{pr}(\tilde \mu),
\end{array}
\end{equation}
where $Q^{-1}(\tilde \mu)B( \mu)$ is exactly the state vector $x(\tilde \mu)$ of the primal system. 

%Consider the series expansion of $x(\tilde \mu)$ in~(\ref{eq:x_expan}), we see that if taking the same expansion point as in~(\ref{eq:x_expan}), then the series expansion of $x_{r_{pr}}(\tilde \mu)$ leads to the subspace $\textrm{range}(V)$. Finally, $G^{-1}(\tilde \mu)B( \mu)$ in the last equality of~(\ref{eq:xrpr}) provides no new information than $V$, so that we can use $\textrm{range}(V)$ as the subspace for approximating the trajectory space of $x_{r_{pr}}(\tilde \mu)$, i.e. $V_{r_{pr}}=V$.
%However, from the Proposition~\ref{prop:VrprV}, we know that $V_{r_{pr}}$ should be different from $V$. 

Similarly as constructing $V_{r_{du}}$, we use expansion points different from those used for $V$ to obtain a second projection matrix $V_{r_{pr}}^1$ which is as different as $V$, then the projection matrix 
\begin{equation}
\label{eq:Vrpr}
V_{r_{pr}}:=\textrm{orth}\{V_{r_{pr}}^1, V\}
\end{equation} 
should represent  
the trajectory of $x_{r_{pr}}(\tilde \mu)$ well. 

\subsubsection{Constructing $V_{r_{rpr}}$}
From the
state vector of the primal-residual-residual system~(\ref{eq:prim_resi_resi}), we see that
\begin{equation}
\label{eq:xrrpr}
\begin{array}{rl}
x_{r_{rpr}}(\tilde \mu)&=Q^{-1}(\tilde \mu)r_{r_{pr}}(\tilde \mu)\\
&=Q^{-1}(\tilde \mu)(r_{pr}(\tilde \mu)-Q(\tilde \mu)V_{r_{pr}}z_{r_{pr}}(\tilde \mu))\\
&=Q^{-1}(\tilde \mu)(r_{pr}(\tilde \mu)-V_{r_{pr}}z_{r_{pr}}(\tilde \mu))\\
&=Q^{-1}(\tilde \mu)(B(\mu)-QVz_{pr}(\tilde \mu))-V_{r_{pr}}z_{r_{pr}}(\tilde \mu)\\
&=Q^{-1}(\tilde \mu)B(\mu)-Vz_{pr}(\tilde \mu)-V_{r_{pr}}z_{r_{pr}}(\tilde \mu).
\end{array}
\end{equation}
Taking the same expansion point as in~(\ref{eq:x_expan}), the series expansion of $Q^{-1}(\tilde \mu)B(\mu)$ in the last equation of~(\ref{eq:xrrpr}) gives rise to the projection matrix $V$. Consequently, the subspace for $x_{r_{rpr}}(\tilde \mu)$ is $\textrm{range}(V,V_{r_{pr}})$, which is equivalent with $\textrm{range}(V_{r_{pr}})$, since $V$ is already included in $V_{r_{pr}}$ in ~(\ref{eq:Vrpr}). This is in contradiction with Proposition~\ref{prop:VrrprV} that $V_{r_{rpr}}$ should be different from $V_{r_{pr}}$. Therefore, $Q^{-1}(\tilde \mu)B(\mu)$ in the last equation of~(\ref{eq:xrrpr}) cannot be expanded using the same expansion points as those for both $V$ and $V_{r_{pr}}$. Recall that $V_{r_{rpr}}$ is used to construct the ROM of the primal-residual-residual system~(\ref{eq:prim_resi_resi}) and contributes to the error estimator $\Delta_3^{pr}(\tilde \mu)$. Then the expansion points for series expansion of $Q^{-1}(\tilde \mu)B(\mu)$ in the last equation of~(\ref{eq:xrrpr}) can be iteratively chosen by searching the maximum of $|C( \mu)\hat x_{r_{rpr}}(\tilde \mu)|$, the second part of $\Delta_3^{pr}(\tilde \mu)$, which purely depends on the ROM built by $V_{r_{rpr}}$. Greedy algorithms computing the projection matrices are presented in Section~\ref{sec:alg}.

\section{Comparing the proposed error estimators with the existing ones}
\label{sec:comp}

\subsection{Review of the error estimator in~\cite{morSemZP18}}

State error estimation as well as output error estimation for parametric linear steady systems is proposed in~\cite{morSemZP18} based on randomized residuals. Given the system has only a single input, the output error estimation can be used to estimate the transfer function error in frequency domain. The transfer function error $e_H(\tilde \mu):=H(\tilde \mu)-\hat H(\tilde \mu)$ can be measured using the 2-norm $\|e_H(\tilde \mu)\|_2$. The error estimator is given as
\begin{equation}
\label{eq:est_rand0}
\|e_H(\tilde \mu)\|_2 \approx \frac{1}{K}\left(\sum\limits_{i=1}^{K} \delta_i^2 \right)^{1/2}=:\Delta_r(\tilde \mu), 
\end{equation}
where $\delta_i= ( x_{du}^i (\tilde \mu))^T r_{pr}(\tilde \mu)$, and $ x_{du}^i(\tilde \mu)$ solves the $i$th random dual system,
\begin{equation}
\label{eq:dual_rand}
Q(\tilde \mu)^T x_{du}^i(\tilde \mu)=z_i,  i=1,\ldots,K,
\end{equation}
where $z_i\sim~\mathcal N(0, C^T(\mu)C(\mu))$ is a random vector following the normal distribution with zero mean and covariance matrix $C^T(\mu)C(\mu) \in \mathbb R^{n\times n}$. 
According to Remark 2.6 in~\cite{morSemZP18}, the random dual systems reduce to 
\begin{equation}
\label{eq:dual_rand_SO}
Q(\tilde \mu)^T x_{du}^i(\tilde \mu)=\xi_i C^T(\mu),  i=1,\ldots,K,
\end{equation}
where $\xi_i \sim~\mathcal N(0, 1)$ is a random variable (scalar) with standard normal random distribution. Therefore, $x_{du}^i(\tilde \mu)$ can be obtained by first solving the dual system in~(\ref{eq:dual}) to get $x_{du}(\tilde \mu)$ and then multiplying $x_{du}(\tilde \mu)$ with $\xi_i$, i.e. $x_{du}^i(\tilde \mu)=\xi_i x_{du}(\tilde \mu)$. 

It is stated in~\cite{morSemZP18} (Corollary 2.5) that under certain conditions, $\Delta_r$ is an error estimator of the true error with the probability 
\begin{equation}
\label{eq:est_rand}
\mathbb P\{w^{-1} \Delta_r(\tilde \mu) \leq \|e_H(\tilde \mu)\|_2 \leq w \Delta_r(\tilde \mu), \forall \tilde \mu \in \Xi \}\geq 1-\delta,
\end{equation}
where $w>\sqrt{e}$, $e$ is the Euler number, and $\Xi$ is a finite set of parameter samples, $0<\delta<1$.
Note that the dual system~(\ref{eq:dual}) with large size $n$ needs to be solved at least once for every parameter to obtain $x_{du}^i$, this is still costly. Therefore,
for single output systems, $x_{du}$ is replaced by $\hat x_{du}$, so that only the reduced dual system in~(\ref{eq:dual_redu}) needs to be solved. For multiple output systems, each of the random dual systems in~(\ref{eq:dual_rand}) is first reduced to a small system and then 
$x_{du}^i$ is approximated by the approximate solutions $\hat x_{du}^i$ computed from the reduced random dual systems. Finally, 
we have
\begin{equation}
\label{eq:estr}
\begin{array}{rcl}
\|e(\tilde \mu)\|_2 &\approx& \frac{1}{K}\left(\sum\limits_{i=1}^{K} \delta_i^2 \right)^{1/2}=:\Delta_r(\tilde \mu)\\
&\approx& \frac{1}{K}\left(\sum\limits_{i=1}^{K} \tilde \delta_i^2 \right)^{1/2}=: \tilde \Delta_r(\tilde \mu),
 \end{array}
\end{equation}
where $\tilde \delta_i= (\hat x_{du}^i (\tilde \mu))^T r_{pr}(\tilde \mu)$.

\subsection{Robustness comparison}

\begin{itemize}

\item $\Delta_1(\tilde \mu)$ vs. $\Delta_1^{pr}(\tilde \mu)$: To compute $\Delta_1(\tilde \mu)$, we need reduce both a primal system and a dual system. Whereas, the primal system and the primal-residual system are reduced to obtain $\Delta_1^{pr}(\tilde \mu)$. Although it is not clear which one better estimates the true error theoretically, numerical results nevertheless show obvious superiority of 
$\Delta_1^{pr}(\tilde \mu)$ over $\Delta_1(\tilde \mu)$.

\item $\Delta_1(\tilde \mu)$ vs. $\Delta_2(\tilde \mu)$: it is clear that $\Delta_2(\tilde \mu)$ is an upper bound of $\Delta_1(\tilde \mu)$, though it is not an upper bound of the true error. This means, $\Delta_1(\tilde \mu)$ is more likely to underestimate the true error than $\Delta_2(\tilde \mu)$, if $W_{r_{du}} \neq W_{du}$ due to Proposition~\ref{prop:VdurVdu}.

\item $\Delta_1(\tilde \mu)$ vs. $\Delta_2^{pr}(\tilde \mu)$: analogously, $\Delta_1(\tilde \mu)$ is more likely to underestimate the true error than $\Delta_2^{pr}(\tilde \mu)$, if $W_{r_{pr}} \neq W$ due to Proposition~\ref{prop:VrprV}.

\item $\Delta_1^{pr}(\tilde \mu)$ vs. $\Delta_3(\tilde \mu)$: $\Delta_1^{pr}(\tilde \mu)$ is more likely to underestimate the true error than $\Delta_3(\tilde \mu)$.

\item $\Delta_1^{pr}(\tilde \mu)$ vs. $\Delta_3^{pr}(\tilde \mu)$: $\Delta_1^{pr}(\tilde \mu)$ is more likely to underestimate the true error than $\Delta_3^{pr}(\tilde \mu)$, if $W_{r_{rpr}} \neq W_{r_{pr}}$ due to Proposition~\ref{prop:VrrprV}.

\item $\Delta_2(\tilde \mu)$ vs. $\Delta_2^{pr}(\tilde \mu)$: the only difference between $\Delta_2$ and $\Delta_2^{pr}(\tilde \mu)$ is the difference between their second parts, where the ROM of the dual residual system ($\hat x_{du}(\tilde \mu)$) is used for $\Delta_2(\tilde \mu)$, whereas the ROM of the primal-residual system ($\hat x_{r_{pr}}(\tilde \mu)$) is used for $\Delta_2^{pr}(\tilde \mu)$. They also behave similarly in the numerical experiments.

\item $\Delta_2(\tilde \mu)$ vs. $\Delta_3(\tilde \mu)$: the first  term $|\hat x^T_{r_{du}}(\tilde \mu)r_{pr}(\tilde \mu)|$ of $\Delta_2(\tilde \mu)$ results from the ROM of the primal system and that of the dual system.  The first term $|C(\mu)\hat x_{r_{pr}}(\tilde \mu)|$ of $\Delta_3(\tilde \mu)$ results from reducing the primal system and the primal-residual system. 
As for their second terms: $|\hat x_{du}^T(\tilde \mu)r_{pr}(\tilde \mu)|$ of $\Delta_2(\tilde \mu)$ and $|\hat x_{du}^T(\tilde \mu)r_{r_{pr}}(\tilde \mu)|$ of $\Delta_3(\tilde \mu)$,  $r_{pr}(\tilde \mu)$
is the residual from the ROM of the primal system, but $r_{r_{pr}}(\tilde \mu)$ is the residual 
from the ROM of the primal-residual system. $r_{r_{pr}}(\tilde \mu)$ is the result of two-step model reduction, whereas $r_{pr}$ results from one step of MOR. Numerical results show that $\Delta_2(\tilde \mu)$ is  more robust than $\Delta_3(\tilde \mu)$, when $\Delta_2(\tilde \mu)$ is computed properly, especially for near symmetric systems.

\item $\Delta_3(\tilde \mu)$ vs. $\Delta_3^{pr}(\tilde \mu)$: The only difference between $\Delta_3(\tilde \mu)$ and $\Delta_3^{pr}(\tilde \mu)$ is the difference between their second parts, where $\hat x_{du}(\tilde \mu)$, the quantity computed from the ROM of the dual system is used for $\Delta_3(\tilde \mu)$, whereas, $\hat x_{r_{rpr}}(\tilde \mu)$, the quantity computed from the ROM of the primal-residual-residual system is used for $\Delta_3^{pr}(\tilde \mu)$. Numerical results in the next section show little difference between their effectivities.

\item $\Delta_0(\tilde \mu)$ vs. $\Delta_2(\tilde \mu)$ in~\cite{morFenAB17}: It is shown in~\cite{morFenB19b} that $\Delta_0(\tilde \mu)$ has motivated the derivation of $\Delta_2(\tilde \mu)$ and can be seen as an upper bound of $\Delta_2(\tilde \mu)$. Although $\Delta_0(\tilde \mu)$ is an error bound of the 
transfer function error, it is much more time consuming to compute as compared with $\Delta_2(\tilde \mu)$, since the smallest singular value of a large matrix (of the original model size $n$) needs to be solved for every parameter value in a given training set. $\Delta_2(\tilde \mu)$ avoids this computational issue. Numerical tests on several models in~\cite{morFenB19b} have shown that $\Delta_2(\tilde \mu)$ is much tighter than $\Delta_0(\tilde \mu)$ and behaves as an error bound,  except for very small true errors close to machine precision.

\item $\tilde \Delta_r(\tilde \mu)$ in~\cite{morSemZP18} vs. $\Delta_1(\tilde \mu)$: 
From the proof of Theorem 1, we see that the quantity $|x_{du}^T(\tilde \mu)r_{pr}(\tilde \mu)|$ in~(\ref{error}) is exactly the true error. Using a similar description as in~(\ref{eq:est_rand}),  $|x_{du}^T(\tilde \mu)r_{pr}|$ satisfies
\begin{equation}
\label{eq:est_1p}
\mathbb P\{w^{-1} |x_{du}^T(\tilde \mu)r_{pr}| \leq \|e_H(\tilde \mu)\|_2 \leq w |x_{du}^T(\tilde \mu)r_{pr}(\tilde \mu)|, \forall \tilde \mu \in \Xi, \forall \Xi \in \mathcal D \}=1,
\end{equation}
with $w=1$, which is an exact estimation of the true error not only for any $\tilde \mu$ in a given $\Xi$ as in~(\ref{eq:est_rand}), but also for any $\tilde \mu$ in $\mathcal D$ . Here, $\mathcal D$ is the continuous parameter domain. 
Comparing~(\ref{eq:est_1p}) with~(\ref{eq:est_rand}), we know that $\Delta_r(\tilde \mu)$ in~(\ref{eq:est_rand0}) and (\ref{eq:est_rand}) is an error estimator, whereas $|x_{du}^T(\tilde \mu)r_{pr}(\tilde \mu)|$ in~(\ref{error}) and (\ref{eq:est_1p}) is the true error. Furthermore,   
the error estimator $\Delta_1(\tilde \mu)$ is derived based on $|x_{du}^T(\tilde \mu)r_{pr}(\tilde \mu)|$ by replacing the true dual solution $x_{du}(\tilde \mu)$ in $|x_{du}^T(\tilde \mu)r_{pr}(\tilde \mu)|$ with the approximate dual solution $\hat x_{du}(\tilde \mu)$; whereas $\tilde \Delta_r(\tilde \mu)$ is derived based on $\Delta_r(\tilde \mu)$ in~(\ref{eq:est_rand0}) also by replacing $x_{du}(\tilde \mu)$ in $\Delta_r(\tilde \mu)$ with $\hat x_{du}(\tilde \mu)$. 
In summary, $\Delta_1(\tilde \mu)$ is only a one-step approximation of the true error, whereas, $\tilde \Delta_r(\tilde \mu)$ is a 
two-step approximation of the true error. It is therefore not difficult to see that $\Delta_1(\tilde \mu)$ should be tighter than $\tilde \Delta_r(\tilde \mu)$. Simulation results also show that 
$\tilde \Delta_r(\tilde \mu)$ is often not as tight as $\Delta_1(\tilde \mu)$. From the previous analyses, $\Delta_1(\tilde \mu)$
is less accurate than all the other proposed error estimators, which can also be seen from the numerical results in Section~\ref{sec:numerics}. Therefore, it appears to be unnecessary to compare $\tilde \Delta_r(\tilde \mu)$ with the other estimators.

\end{itemize}

\subsection{Computational complexity comparison}

Computing any of the error estimators discussed in this work needs to construct a ROM of the primal system. 
It is noticed that the projection matrix $V$ used to construct the ROM of the primal system~(\ref{eq:primal}) is the same matrix used to derive the ROM of the original system. Therefore, the ROM of the primal system can be derived {\it for free} in the sense that $V$ is obtained without additional computation. Except for constructing the ROM of the primal system, we list the following additional costs required by different error estimators.

\begin{itemize}
\item Computing $\Delta_0(\tilde \mu)$ involves constructing the ROM of the dual system~(\ref{eq:dual}), and computing the inf-sup constant at each $\tilde \mu$ in the training set $\Xi$.

\item Computing $\Delta_1(\tilde \mu)$ or $\Delta_1^{pr}(\tilde \mu)$ involves constructing the ROM of the dual system or the ROM of the primal-residual system~(\ref{eq:prim_resisys}). 

\item Computing $\Delta_2(\tilde \mu)$, $\Delta_2^{pr}(\tilde \mu)$ or $\Delta_3(\tilde \mu)$ involves constructing the ROM of the dual system~(\ref{eq:dual}), and additionally the ROM of a corresponding residual system needs to be constructed: the ROM of the dual-residual system~(\ref{eq:dual_resisys}) or the ROM of the primal-residual system~(\ref{eq:prim_resisys}). 

\item Computing $\Delta_3^{pr}(\tilde \mu)$ involves constructing the ROMs of two residual systems: the primal-residual system~(\ref{eq:prim_resisys}) and the primal-residual-residual system~(\ref{eq:prim_resi_resi}).

\item Computing $\Delta_r(\tilde \mu)$ involves constructing the ROM of the dual system if the output matrix $C$ is a vector, otherwise, $K$ ROMs of the $K$ random dual systems in~(\ref{eq:dual_rand}) must be constructed.

\end{itemize}

From Subsection~\ref{subsec:construct_ROMs}, we see that to construct the ROMs of the dual system, or any of the residual systems, one only has to solve several linear systems to compute the coefficients in the series expansion of the corresponding solution vector. For interpolatory MOR methods in frequency domain, the cost of constructing the ROM of any of the above mentioned system is equivalent to the cost of constructing the ROM of the original system. This means, in order to compute any of the error estimators, one or two additional ROMs need to be constructed
at each iteration of the greedy algorithm. However, the error bound $\Delta_0(\tilde \mu)$ has the highest computational cost, since computing the inf-sup constant means solving a large eigenvalue problem at each 
$\tilde \mu$ in $\Xi$ per iteration. Furthermore, from the proposed greedy algorithms in the next section, the additional ROMs are constructed simultaneously with the ROM~(\ref{eq:ROM}) of the original system, no separate greedy algorithms are required as in~\cite{morSchWH18}.

\section{Greedy algorithms for constructing the projection matrices}
\label{sec:alg}

The aim of an efficient error estimator is to construct a ROM of the original system with satisfying accuracy and high reliability. In the following, we show algorithms for constructing the ROM of the original system, where an error estimator acts as a guidance for greedy constructing the projection matrix $V$ for the ROM. Again, we use Galerkin Projection to compute the ROM of the original systems and the ROMs of the other systems which are involved in computing the error estimators. To compute any of the proposed error estimators, corresponding projection matrices $V_{du}$, $V_{r_{du}}$, $V_{r_{pr}}$, $V_{r_{rpr}}$ need to be constructed simultaneously with $V$. 

As compared with the algorithms in~\cite{morFenB19b}, we have included the proposed variants of the error estimator and computation of their corresponding projection matrices into the algorithms.
The performance of the proposed error estimators as well as the existing ones are compared in the next section.

We first present the greedy scheme for non-parametric systems in Algorithm~\ref{alg:greedy_nonpara}. The standard moment-matching method~\cite{morBauBF14} is used to compute the projection matrices. 
$\epsilon_{tol}$ is the tolerance for the error of the reduced transfer function. Once the maximal error estimator over the whole sample set $\Xi$ is below the tolerance, the greedy algorithm stops. In every iteration, the $s$ sample corresponding to the maximal error estimator is chosen as the next expansion point $s_i$ (Step 22). Steps 5, 8, 12, 16 and Step 20 orthogonalize the vectors in $V(s_i)$ and $V_{du}(s_i)$, $V_{r_{du}}(s_i^\alpha)$, $V_{r_{pr}}(s_i^\alpha)$, $V_{r_{rpr}}(s_i^\beta)$ against the existing vectors in
$V$ and $V_{du}$, $V_{r_{du}}$, $V_{r_{pr}}$, $V_{r_{rpr}}$, respectively.
In Algorithm~\ref{alg:greedy_nonpara}, some steps are only implemented for certain error estimators, depending on which error estimator is being used. $s_i^\alpha$ is chosen to iteratively construct $V_{r_{du}}$ or $V_{r_{pr}}$, while $s_i^{\beta}$ is chosen to construct $V_{r_{rpr}}$. The choice of the expansion points $s_i^\alpha$ or $s_i^\beta$ depends on the part of the error estimator which is solely decided by the corresponding projection matrices $V_{r_{du}}$, $V_{r_{pr}}$, or $V_{r_{rpr}}$. As for $\Delta_1^{pr}(\tilde \mu)$, since $s_i$ is chosen according to $\Delta_1^{pr}$, $s_i^\alpha$ is chosen according to the norm of $r_{r_{pr}}$ to avoid  
$V_{r_{pr}}$ being identical with $V$ due to Proposition~\ref{prop:VrprV}.
\begin{algorithm}[h]
\caption[]{Greedy ROM construction for non-parametric systems~(\ref{eq:FOM})}.
\label{alg:greedy_nonpara}
\begin{algorithmic}[1] 
\REQUIRE System matrices $E,A,B,C, \epsilon_{tol}$, $\Xi$: a set of samples of $s$ covering the interesting frequency range.
\ENSURE The projection matrix $V$ for constructing the ROM in~(\ref{eq:ROM}).
\STATE $V=[]$, $V_{du}=[]$,$V_{r_{du}}=[]$, $V_{r_{pr}}=[]$, $V_{r_{rpr}}=[]$, set $\epsilon=\epsilon_{tol}+1, q>1$.
\STATE Initial expansion point: $s_i \in \Xi$, for $V, V_{du}$; \ $s_i^\alpha \in \Xi$, for $V_{r_{du}}$(or $V_{r_{pr}}$); \   $s_i^\beta \in \Xi$, for $V_{r_{rpr}}$,  $i=1$. 
\WHILE{$\epsilon>\epsilon_{tol}$}

              \STATE $\mathrm{range}(V(s_i))=\mathrm{span}\{ \tilde B(s_i), \ldots, (\tilde A(s_i))^{q-1}\tilde B(s_i) \}$, where $\tilde A(s)=(sE-A)^{-1}E$, $\tilde{B}(s)=(sE-A)^{-1}B$, and $q \ll n$
\STATE $V=\mathrm{orth}\{V, V(s_i)\}$
\IF{$\Delta(s)=\Delta_1(s)$, or $\Delta_2(s)$, or $\Delta_2^{pr}(s)$, or $\Delta_3(s)$}
           \STATE $\mathrm{range}(V_{du}(s_i))=\mathrm{span}\{\tilde C(s_i), \ldots, (\tilde A_c(s_i))^{q-1}\tilde C(s_i)\}$, where $\tilde{A}_c(s)=(sE-A)^{-T}E^T$, $\tilde{C}(s)=(sE-A)^{-T}C^T$.
\STATE $V_{du}=\mathrm{orth}\{V_{du}, V_{du}(s_i)\}$. 
\ENDIF
\IF { $\Delta(s)=\Delta_2(s)$}
 \STATE $\mathrm{range}(V_{r_{du}}(s_i^\alpha))=\mathrm{span}\{\tilde C(s_i^\alpha), \ldots, (\tilde A_c (s_i^\alpha)^{q-1}\tilde C(s_i^\alpha)\}$. 
\STATE $V_{r_{du}}=\mathrm{orth}\{V_{du}, V_{r_{du}}, V_{r_{du}}(s_i^\alpha)\}$.
\ENDIF 
\IF {$\Delta(s)=\Delta_1^{pr}(s)$, or $\Delta_2^{pr}(s)$, or $\Delta_3(s)$, or $\Delta_3^{pr}(s)$}
\STATE  $\mathrm{range}(V_{r_{pr}}(s_i^\alpha))=\mathrm{span}\{ \tilde B(s_i^\alpha), \ldots, (\tilde A(s_i^\alpha))^{q-1}\tilde B(s_i^\alpha) \}$. 
\STATE $V_{r_{pr}}=\mathrm{orth}\{V, V_{r_{pr}}, V_{r_{pr}}(s_i^\alpha)\}$. 
\ENDIF
\IF {  $\Delta(s)=\Delta_3^{pr}(s)$}
\STATE  $\mathrm{range}(V_{r_{rpr}}(s_i^\beta))=\mathrm{span}\{ \tilde B(s_i^\beta), \ldots, (\tilde A(s_i^\beta))^{q-1}\tilde B(s_i^\beta) \}$.
\STATE $V_{r_{rpr}}=\mathrm{orth}\{V, V_{r_{pr}}, V_{r_{rpr}}, V_{r_{rpr}}(s_i^\beta)\}$.  
\ENDIF
 \STATE  $i=i+1$, $s_i=\textrm{arg} \max\limits_{s \in \Xi} \Delta(s)$.
\IF {$\Delta(s)=\Delta_2(s)$}
\STATE $s^\alpha_i=\textrm{arg} \max\limits_{s \in \Xi} |\hat x^T_{r_{du}}(s)r_{pr}(s)|$.\hfill \%second part of $ \Delta_2(s)$
\ENDIF
\IF {$\Delta(s)=\Delta_2^{pr}(s)$}
\STATE $s^\alpha_i=\textrm{arg} \max\limits_{s \in \Xi} |r_{du}^T(s) \hat x_{r_{pr}}(s)|$. \hfill \%second part of $ \Delta_2^{pr}(s)$
\ENDIF
\IF {$\Delta(s)=\Delta_1^{pr}(s)$}
\STATE $s^\alpha_i=\textrm{arg} \max\limits_{s \in \Xi} \|r_{r_{pr}}(s)\|_2$. \ $r_{r_{pr}(s)}$ is defined in~(\ref{error_var_distance}). 
\ENDIF
\IF {$\Delta(s)=\Delta_3(s)$, or $\Delta_3^{pr}(s)$}
\STATE $s^\alpha_i=\textrm{arg} \max\limits_{s \in \Xi} \Delta_1^{pr}(s)$. \hfill \%first part of $\Delta_3(s)$ or $\Delta_3^{pr}(s)$
\ENDIF
\IF {$\Delta(s)=\Delta_3^{pr}(s)$}
\STATE $s^\beta_i=\textrm{arg} \max\limits_{s \in \Xi} |C\hat x_{r_{rpr}}(s)|$. \quad 
\hfill \%second part of $\Delta_3^{pr}(s)$ 
\ENDIF
\STATE ${\epsilon}=\Delta(s_i)$.
\ENDWHILE
\end{algorithmic}
\end{algorithm}

Algorithm~\ref{alg:greedy_para} shows the adaptive scheme for linear parametric systems.
Algorithm~\ref{alg:greedy_para} is similar with Algorithm~\ref{alg:greedy_nonpara}. Its only difference from Algorithm~\ref{alg:greedy_nonpara} is in computing the projection matrices at a chosen expansion point in Steps 4, 7, 11, 15 and Step 19, where the multi-moment-matching method instead of the moment-matching method is used. 
\begin{algorithm}[h]
\caption[]{Greedy ROM construction for parametric systems~(\ref{eq:FOM})}.
\label{alg:greedy_para}
\begin{algorithmic}[1] 
\REQUIRE System matrices $E(\mu),A(\mu),B(\mu),C(\mu), \epsilon_{tol}$, $\Xi$: a set of samples of $\tilde \mu$ covering the interesting parameter domain.
\ENSURE The projection matrix $V$ for constructing the ROM in~(\ref{eq:ROM}).
\STATE $V=[]$, $V_{du}=[]$,$V_{r_{du}}=[]$, $V_{r_{pr}}=[]$, $V_{r_{rpr}}=[]$, set $\epsilon=\epsilon_{tol}+1$.
\STATE Initial expansion point: $\tilde \mu^i \in \Xi$ for $V, V_{du}$; \ $\tilde \mu^i_\alpha$ for $V_{r_{du}}$(or $V_{r_{pr}}$); \ $\tilde \mu^i_\beta$ for $V_{r_{rpr}}$,  $i=1$. 
\WHILE{$\epsilon>\epsilon_{tol}$}
\STATE compute $V_{\tilde \mu^i}$ following $(\ref{eq:Vi})$.
\STATE $V=\mathrm{orth}\{V, V_{\tilde \mu^i)}\}$.
\IF{$\Delta(\tilde \mu)=\Delta_1(\tilde \mu)$, or $\Delta_2(\tilde \mu)$, or $\Delta_2^{pr}(\tilde \mu)$, or $\Delta_3(\tilde \mu)$}
\STATE compute $V^{du}_{\tilde \mu^i}$ following $(\ref{eq:Vidu})$.
\STATE $V_{du}=\mathrm{orth}\{V_{du}, V^{du}_{\tilde \mu^i}\}$.
\ENDIF
\IF{$\Delta(\tilde \mu)=\Delta_2(\tilde \mu)$}
\STATE  compute $V^{r_{du}}_{\tilde \mu_\alpha^i}$ following $(\ref{eq:Vidur1})$.
\STATE $V_{r_{du}}=\mathrm{orth}\{V_{du}, V_{r_{du}}, V^{r_{du}}_{\tilde \mu_\alpha^i}\}$.
\ENDIF
\IF{$\Delta(\tilde \mu)=\Delta_1^{pr}(\tilde \mu)$, or $\Delta_2^{pr}(\tilde \mu)$, or $\Delta_3(\tilde \mu)$, or $\Delta_3^{pr}(\tilde \mu)$}
\STATE compute $V^{r_{pr}}_{\tilde \mu^i_\alpha}$ following $(\ref{eq:Vi})$.
\STATE $V_{r_{pr}}=\mathrm{orth}\{V, V_{r_{pr}}, V^{r_{pr}}_{\tilde \mu_\alpha^i}\}$.
\ENDIF
\IF{$\Delta(\tilde \mu)=\Delta_3^{pr}(\tilde \mu)$}
\STATE compute $V^{r_{rpr}}_{\tilde \mu^i_\beta}$ following $(\ref{eq:Vi})$.
\STATE $V_{r_{rpr}}=\mathrm{orth}\{V, V_{r_{pr}}, V_{r_{rpr}}, V^{r_{rpr}}_{\tilde \mu_\beta^i}\}$.
\ENDIF
\STATE  $i=i+1$, $\tilde \mu^i=\textrm{arg} \max\limits_{\tilde \mu \in \Xi} \Delta(\tilde \mu)$.
\IF {$\Delta(\tilde \mu)=\Delta_2(\tilde \mu)$}
\STATE $\tilde \mu_\alpha^i=\textrm{arg} \max\limits_{\tilde \mu \in \Xi} |\hat x^T_{r_{du}}(\tilde \mu)r_{pr}(\tilde \mu)|$. \hfill \%second part of $ \Delta_2(\tilde \mu)$
\ENDIF
\IF {$\Delta(s)=\Delta_2^{pr}(\tilde \mu)$}
\STATE $\tilde \mu_\alpha^i=\textrm{arg} \max\limits_{s \in \Xi} |r_{du}^T(\tilde \mu) \hat x_{r_{pr}}(\tilde \mu)|$. \hfill \%second part of $ \Delta_2^{pr}(\tilde \mu)$
\ENDIF
\IF {$\Delta(\tilde \mu)=\Delta_1^{pr}(\tilde \mu)$}
\STATE $\tilde \mu_\alpha^i=\textrm{arg} \max\limits_{\tilde \mu  \in \Xi} \|r_{r_{pr}}(\tilde \mu)\|_2$.      \hfill \% $r_{r_{pr}}(\tilde \mu)$ is defined in~(\ref{error_var_distance}). 
\ENDIF
\IF {$\Delta(\tilde \mu)=\Delta_3(\tilde \mu)$, or $\Delta_3^{pr}(\tilde \mu)$}
\STATE $\tilde \mu_\alpha^i=\textrm{arg} \max\limits_{\tilde \mu \in \Xi} \Delta_1^{pr}(\tilde \mu)$.
\ENDIF
\IF {$\Delta(\tilde \mu)=\Delta_3^{pr}(\tilde \mu)$}
\STATE $\tilde \mu_\beta^i=\textrm{arg} \max\limits_{\tilde \mu \in \Xi} |C(\mu)\hat x_{r_{rpr}}(\tilde \mu)|$.      \hfill \%second part of $\Delta_3^{pr}(\tilde \mu)$
\ENDIF
\STATE ${\epsilon}=\Delta(\tilde \mu^i)$.
\ENDWHILE
\end{algorithmic}
\end{algorithm}

We point out in Remark~\ref{rem:VduV}, Section~\ref{sec:relation} that when a system is almost symmetric, $\Delta_1(\tilde \mu)$ performs badly, which will in turn, affect the behavior of 
$\Delta_2(\tilde \mu)$ and $\Delta_2^{pr}(\tilde \mu)$. From the simulation results in the next section, we will see that, except for the CD player model, $\Delta_1(\tilde \mu)$ is not a good estimator. It is observed that for the RLCtree model, $Q(s)$ is symmetric, and only two elements are different between the input vector $B(\mu)$ and the transpose of the output vector $C(\mu)$. For the MIMO example, the matrix $E$ is symmetric and $B(\mu)=C^T(\mu)$. For the parametric example, the mass matrix is symmetric. The stiffness matrix is unsymmetric, but the maximal magnitude of the elements in the matrix  $T^T(\mu)-T(\mu)$ is around $O(10^{-18})$ for all the parameters. This implicates that $T$ should be symmetric in theory, and the small differences between $T(\mu)$ and its transpose might be caused by numerical errors.  The maximal magnitude of the elements in the damping matrix is also small, $O(10^{-11})$. All the three examples are close to the symmetric case indicated in Remark~\ref{rem:VduV}. 

In the following, we propose two algorithms: Algorithms~\ref{alg:greedy_nonpara_im}-\ref{alg:greedy_para_im}, aiming at improving the behavior of $\Delta_1(\tilde \mu)$, $\Delta_2(\tilde \mu)$ and $\Delta_2^{pr}(\tilde \mu)$ for nearly symmetric systems. Their main difference from Algorithm~\ref{alg:greedy_nonpara} and ~\ref{alg:greedy_para} is that instead of using the same expansion point for $V_{du}$ and $V$, different expansion points ($s_i^\gamma$ or $\tilde \mu_\gamma^i$) are chosen for $V_{du}$ according to a different error criterion which directly depends on $V_{du}$, see Steps 23-28 in Algorithm~\ref{alg:greedy_nonpara_im} and Algorithm~\ref{alg:greedy_para_im}, respectively.
\begin{algorithm}[h]
\caption[]{Improving  $\Delta_1(\tilde \mu)$, $\Delta_2(\tilde \mu)$ and $\Delta_2^{pr}(\tilde \mu)$ for nearly symmetric and non-parametric systems~(\ref{eq:FOM})}.
\label{alg:greedy_nonpara_im}
\begin{algorithmic}[1] 
\REQUIRE System matrices $E,A,B,C, \epsilon_{tol}$, $\Xi$: a set of samples of $s$ covering the interesting frequency range.
\ENSURE The projection matrix $V$ for constructing the ROM in~(\ref{eq:ROM}).
\STATE $V=[]$, $V_{du}=[]$, $V_{r_{du}}=[]$, $V_{r_{pr}}=[]$ set $\epsilon=\epsilon_{tol}+1, q>1$.
\STATE Initial expansion point: $i=1$, $s_i \in \Xi$ for $V$; \ $s_i^\alpha \in \Xi$ for $V_{r_{du}}$(or $V_{r_{pr}}$); \ 
 $s_i^\gamma \in \Xi$ for $V_{du}$.
\WHILE{$\epsilon>\epsilon_{tol}$}
 \STATE $\mathrm{range}(V(s_i))=\mathrm{span}\{ \tilde B(s_i), \ldots, (\tilde A(s_i))^{q-1}\tilde B(s_i) \}$.
\STATE $V=\mathrm{orth}\{V, V(s_i)\}$
\STATE $\mathrm{range}(V_{du}(s_i^\gamma))=\mathrm{span}\{\tilde C(s_i^\gamma), \ldots, (\tilde A_c(s_i^\gamma))^{q-1}\tilde C(s_i^\gamma)\}$.
\STATE $V_{du}=\mathrm{orth}\{V_{du}, V_{du}(s_i^\gamma)\}$. 
\IF { $\Delta(s)=\Delta_2(s)$}
 \STATE $\mathrm{range}(V_{r_{du}}(s_i^\alpha))=\mathrm{span}\{\tilde C(s_i^\alpha), \ldots, (\tilde A_c (s_i^\alpha)^{q-1}\tilde C(s_i^\alpha)\}$. 
\STATE $V_{r_{du}}=\mathrm{orth}\{V_{du}, V_{r_{du}}, V_{r_{du}}(s_i^\alpha)\}$.
\ENDIF 
\IF {$\Delta(s)=\Delta_2^{pr}(s)$}
\STATE  $\mathrm{range}(V_{r_{pr}}(s_i^\alpha))=\mathrm{span}\{ \tilde B(s_i^\alpha), \ldots, (\tilde A(s_i^\alpha))^{q-1}\tilde B(s_i^\alpha) \}$. 
\STATE $V_{r_{pr}}=\mathrm{orth}\{V, V_{r_{pr}}, V_{r_{pr}}(s_i^\alpha)\}$. 
\ENDIF
 \STATE  $i=i+1$, $s_i=\textrm{arg} \max\limits_{s \in \Xi} \Delta(s)$.
\IF {$\Delta(s)=\Delta_2(s)$}
\STATE $s^\alpha_i=\textrm{arg} \max\limits_{s \in \Xi} |\hat x^T_{r_{du}}(s)r_{pr}(s)|$.      \hfill \%second part of $\Delta_2(s)$
\ENDIF
\IF {$\Delta(s)=\Delta_2^{pr}(s)$}
\STATE $s^\alpha_i=\textrm{arg} \max\limits_{s \in \Xi} |r_{du}^T(s)\hat x_{r_{pr}}(s)|$.      \hfill \%second part of $\Delta_2^{pr}(s)$
\ENDIF
\IF {$\Delta(s)=\Delta_1(s)$}
\STATE $s^\gamma_i=\textrm{arg} \max\limits_{s \in \Xi} \|r_{du}(s)\|_2$.
\ENDIF
\IF {$\Delta(s)=\Delta_2(s)$ or $\Delta_2^{pr}$}
\STATE $s^\gamma_i=\textrm{arg} \max\limits_{s \in \Xi} \Delta_1(s)$.      \hfill \%first part of $\Delta_2(s)$ or $\Delta_2^{pr}$
\ENDIF
\STATE ${\epsilon}=\Delta(s_i)$.
\ENDWHILE
\end{algorithmic}
\end{algorithm}
\begin{algorithm}[h]
\caption[]{Improving  $\Delta_1(\tilde \mu)$, $\Delta_2(\tilde \mu)$ and $\Delta_2^{pr}(\tilde \mu)$ for nearly symmetric and parametric systems~(\ref{eq:FOM})}.
\label{alg:greedy_para_im}
\begin{algorithmic}[1] 
\REQUIRE System matrices $E(\mu),A(\mu),B(\mu),C(\mu), \epsilon_{tol}$, $\Xi$: a set of samples of $\tilde \mu$ covering the interesting frequency range.
\ENSURE The projection matrix $V$ for constructing the ROM in~(\ref{eq:ROM}).
\STATE $V=[]$, $V_{du}=[]$, $V_{r_{du}}=[]$,  $V_{r_{pr}}=[]$, set $\epsilon=\epsilon_{tol}+1$.
\STATE Initial expansion point: $\tilde \mu^i \in \Xi$ for $V$;  \ $\tilde \mu^i_\alpha \in \Xi$ for $V_{r_{du}}$(or $V_{r_{pr}}$); \ $\tilde \mu^i_\gamma \in \Xi$ for $V_{du}$; $i=1$. 
\WHILE{$\epsilon>\epsilon_{tol}$}
\STATE compute $V_{\tilde \mu^i}$ following $(\ref{eq:Vi})$.
\STATE $V=\mathrm{orth}\{V, V_{\tilde \mu^i)}\}$.
\STATE compute $V^{du}_{\tilde \mu^i_\gamma}$ following $(\ref{eq:Vidu})$.
\STATE $V_{du}=\mathrm{orth}\{V_{du}, V^{du}_{\tilde \mu^i_\gamma}\}$.
\IF{$\Delta(\tilde \mu)=\Delta_2(\tilde \mu)$}
\STATE  compute $V^{r_{du}}_{\tilde \mu_\alpha^i}$ following $(\ref{eq:Vidur1})$.
\STATE $V_{r_{du}}=\mathrm{orth}\{V_{du}, V_{r_{du}}, V^{r_{du}}_{\tilde \mu_\alpha^i}\}$.
\ENDIF
\IF{$\Delta(\tilde \mu)=\Delta_2^{pr}(\tilde \mu)$}
\STATE compute $V^{r_{pr}}_{\tilde \mu^i_\alpha}$ following $(\ref{eq:Vi})$.
\STATE $V_{r_{pr}}=\mathrm{orth}\{V, V_{r_{pr}}, V^{r_{pr}}_{\tilde \mu_\alpha^i}\}$.
\ENDIF
\STATE  $i=i+1$, $\tilde \mu^i=\textrm{arg} \max\limits_{\tilde \mu \in \Xi} \Delta(\tilde \mu)$.
\IF {$\Delta(\tilde \mu)=\Delta_2(\tilde \mu)$}
\STATE $\tilde \mu_\alpha^i=\textrm{arg} \max\limits_{\tilde \mu \in \Xi} |\hat x^T_{r_{du}}(\tilde \mu)r_{pr}(\tilde \mu)|$.      \hfill \%second part of $\Delta_2(\tilde \mu)$
\ENDIF
\IF {$\Delta(\tilde \mu)=\Delta_2^{pr}(\tilde \mu)$}
\STATE $\tilde \mu_\alpha^i=\textrm{arg} \max\limits_{\tilde \mu \in \Xi} |r^T_{du}(\tilde \mu)\hat x_{r_{pr}}(\tilde \mu)|$.      \hfill \%second part of $\Delta_2^{pr}(\tilde \mu)$
\ENDIF
\IF {$\Delta(\tilde \mu)=\Delta_1(\tilde \mu)$}
\STATE $\tilde \mu_\gamma^i=\textrm{arg} \max\limits_{\tilde \mu \in \Xi}\|r_{du}(\tilde \mu)\|$. 
\ENDIF
\IF {$\Delta(\tilde \mu)=\Delta_2(\tilde \mu)$ or $\Delta_2^{pr}(\tilde \mu)$}
\STATE $\tilde \mu_\gamma^i=\textrm{arg} \max\limits_{\tilde \mu \in \Xi}\Delta_1(\tilde \mu)$. \quad      \hfill \%first part of $\Delta_2(\tilde \mu)$ or $\Delta_2^{pr}(\tilde \mu)$
\ENDIF
\STATE ${\epsilon}=\Delta(\tilde \mu^i)$.
\ENDWHILE
\end{algorithmic}
\end{algorithm}

\clearpage

\section{Simulation results}
\label{sec:numerics}

In this section, we show the performance of the proposed error estimators and the existing ones. Detailed analyses for each of them are presented accordingly. Since the error bound $\Delta_0(\tilde \mu)$ in~\cite{morFenAB17}
has been compared in detail with the error estimator $\Delta_2(\tilde \mu)$ in a recent work~\cite{morFenB19b}, we do not repeat this comparison. Furthermore, since $\Delta_0(\tilde \mu)$ was shown to be less tighter than $\Delta_2(\tilde \mu)$, it will not be compared with other error estimators either, as it will be clear from the results below that 
$\Delta_0(\tilde \mu)$ may not outperform most of the error estimators. 

We use the same four models as in~\cite{morFenB19b} to show the robustness of the error estimators. The first two are non-parametric SISO systems. One is a well-known MOR benchmark example, the model of a CD player (with order $n=120$), the other is a model of an RLC tree circuit with order $n=6,134$. 
The third example is a circuit model with $n=980$. It has 4 inputs and 4 outputs, and no parameters. Both the CD player model and the third multi-input multi-output (MIMO) circuit model are from the SLICOT benchmark collection 
\footnote{URL: http://www.icm.tu-bs.de/NICONET/benchmodred.html}. The last one is the model of a butterfly-shaped micro-gyroscope, available from the MOR benchmark collection\footnote{URL: https://morwiki.mpi-magdeburg.mpg.de/morwiki}. It is a second-order parametric system with $n=17,931$. 

The interesting frequency of the CD player model is $[0, 1\text{~MHz}]$. The interesting frequency of the second and the third models is $[0, 3\text{~GHz}]$. 
The Gyroscope model is a low frequency problem with $f\in [50\text{~Hz}, 250\text{~Hz}]$.

The error tolerance $\epsilon_{tol}$ used in the greedy algorithms, i.e. the error tolerance for the error of the ROM of the original system, is set as $1\times 10^{-3}$ for the first three examples, while for the last example, we set $\epsilon_{tol}=1\times 10^{-7}$, since the transfer function $H(\mu)$ has the smallest magnitude of $2.8\times 10^{-7}$.

For all the non-parametric examples, we use $q=3$ (order of moments matched) in Algorithm~\ref{alg:greedy_nonpara} and Algorithm~\ref{alg:greedy_nonpara_im}. For the parametric model, we use $R_0, R_1$ to generate the matrices $V_{\tilde \mu^i}$, $V^{du}_{\tilde \mu^i}$, $V^{r_{du}}_{\tilde \mu_\alpha^i}$, $V^{r_{pr}}_{\tilde \mu_\alpha^i}$ and $V^{r_{rpr}}_{\tilde \mu_\beta^i}$ in Algorithm~\ref{alg:greedy_para} and Algorithm~\ref{alg:greedy_para_im}.
At each iteration, the maximal error estimator in $\Xi$, is computed, and is used as the error control for the ROM~(\ref{eq:ROM}) of the original system. Therefore, the maximal true error $\epsilon_{\max}=\max\limits_{\mu^i \in \Xi}\epsilon(\mu^i)$ is used for comparison, where $\epsilon(\mu^i)$ is the true error of the ROM evaluated at $\mu^i$, at the current iteration of the algorithm. 

For Algorithms~\ref{alg:greedy_nonpara}-\ref{alg:greedy_para}, the initial expansion point $s_1$ or $\tilde \mu^1$ for computing $V, V_{du}$ is taken as the first sample in $\Xi$, and 
the initial expansion point $s_1^\alpha$ or $\tilde \mu_\alpha^1$ for computing $V_{r_{du}}$, $V_{r_{pr}}$ is taken as the last sample in $\Xi$ to
make the two expansion points different from each other. The expansion point $s_1^\beta$ or $\tilde \mu_\beta^1$ is for $V_{r_{rpr}}$. It is taken as the midpoint in $\Xi$. Algorithms~\ref{alg:greedy_nonpara_im}-\ref{alg:greedy_para_im} are for (nearly) symmetric systems, and the initial expansion points $s_1, \tilde \mu^1 $ for $V$ are different from $s_1^\gamma, \tilde \mu^1_\gamma$ for $V_{du}$. Therefore, $s_1$ or $\tilde \mu^1$ is taken as the first sample in $\Xi$ and $s_1^\gamma$ or $\tilde \mu^1_\gamma$ is taken as the midpoint in $\Xi$. The initial point $s_1^\alpha$ or $\tilde \mu^1_\alpha$ for $V_{r_{du}}$, $V_{r_{pr}}$ is taken as the last point in $\Xi$.
\subsection{The CD player model}
The training set $\Xi$ for this model contains 60 samples of $s$, and then the finally obtained ROM in~(\ref{eq:ROM}) is validated at 600 samples of $s$ covering the whole interesting frequency range. The samples are taken from the interval $[0,1\text{~MHz}]$ using the MATLAB function "\textrm{logspace}". The results of Algorithm~\ref{alg:greedy_nonpara} using different error estimators are shown in Tables~\ref{error_CD player_est1}-\ref{error_CD player_est3}, where the error estimators and the corresponding true errors $\epsilon_{\textrm{max}}$ of the ROMs at each iteration of the Algorithm, are listed. Note that different ROMs are derived by using different error estimators, therefore the true  errors depend on the error estimators and are usually different. This also applies to analogous results listed in the other tables for other examples.

In Table~\ref{error_CD player_est1}, we also show the results for $\tilde \Delta_r(s)$ from~\cite{morSemZP18}, where $K$ in~(\ref{eq:estr}) is taken as $K=20$, which is shown to produce better results than $K=10$~\cite{morSemZP18}. During the greedy iteration, $\tilde \Delta_r(s)$ always underestimates the maximal true error. $\Delta_1^{pr}$ underestimates the true error 
at the first 5 iterations, but then becomes an accurate estimator at the last two iterations. $\Delta_1(s)$ is better than $\tilde \Delta_r(s)$, but is no better than the other estimators.
$\Delta_2(s)$ and its primal version $\Delta_2^{pr}(s)$ behave like error bounds. $\Delta_1^{pr}(s)$, $\Delta_3(s)$ and $\Delta_3^{pr}(s)$ have underestimation only at the first several iterations. In general, once  they bound error from above, they are very tight. 
\begin{table}[h]
\begin{center}
\caption{CD player, $\varepsilon_{tol}=10^{-3}$, $q=3$, $r=44(\Delta_r)$, $r=52 (\Delta_1)$, $r=56 (\Delta_1^{pr})$.}
\label{error_CD player_est1}
\begin{tabular}{|c||c|c||c|c||c|c||} \hline
iteration $i$ & $\varepsilon_{\max} (\tilde \Delta_r)$ & $\Delta_r(s_i)$ & $\varepsilon_{\max} (\Delta_1)$   &  $\Delta_1(s_i)$ & $\varepsilon_{\max} (\Delta_1^{pr})$ &   $\Delta_1^{pr}(s_i)$   \\ \hline    
1      & 61.63 & 21.88   & 40.75    &    $34.93$  &  40.75 &2.56    \\ \hline  
2     &51.98 &   18.46  & $19.34$  &     $33.92$  & 19.34&1.07    \\ \hline
3     &14.49 & 5.14     &  $0.59$   &     $1.47$ &14.48 & 0.64\\  \hline
4   &0.76 &  0.27       & $0.31$     &    $0.26$ &14.45 &5.46     \\ \hline 
5   & 0.11& 0.04      & 0.06   &     $0.11$   &0.26 & 0.26     \\ \hline
6  & 0.0016 & $5.86\times 10^{-4}$&  $0.04$   &     $0.04$  &0.0024 & 0.0024 \\  \hline
7    &--- &---  &  $6.81\times 10^{-4}$  & $7.65\times 10^{-4}$&  $1.28\times 10^{-5}$     &     $1.28\times 10^{-5}$ \\  \hline
  \end{tabular}
\end{center}
\end{table}
\begin{table}[h]
\begin{center}
\caption{CD player, $\varepsilon_{tol}=10^{-3}$, $q=3$, $r=52$.}
\label{error_CD player_est2}
\begin{tabular}{|c|c|c|c|c|} \hline
iteration $i$ & $\varepsilon_{\max} (\Delta_2)$   &    $\Delta_2(s_i)$ & $\varepsilon_{\max} (\Delta_2^{pr})$  & $\Delta_2^{pr}(s_i)$    \\ \hline    
1      & 40.75                                   &    $51$   &  40.75& 46.1   \\ \hline  
2       & $30.16$                &     $35.75$   &19.34  &52.2     \\ \hline
3       &  $0.75$           &     $5.41$  &0.59 & 1.95\\  \hline
4        & $0.32$           &    $0.4$    &0.31 & 0.38  \\ \hline 
5      &  $0.03$          &     $0.03$     &0.06 &  0.19     \\ \hline
6      &  $0.002$          &     $0.002$  &0.04 & 0.04\\  \hline
7      &  $8.28\times 10^{-4}$  &     $8.38\times 10^{-4}$ &$6.82\times 10^{-4}$ &$8.48\times 10^{-4}$ \\  \hline
  \end{tabular}
\end{center}
\end{table}
\begin{table}[h]
\begin{center}
\caption{CD player, $\varepsilon_{tol}=10^{-3}$, $q=3$, $r=52$.}
\label{error_CD player_est3}
\begin{tabular}{|c|c|c|c|c|} \hline
iteration $i$ & $\varepsilon_{\max} (\Delta_3)$   &    $\Delta_3(s_i)$ & $\varepsilon_{\max} (\Delta_3^{pr})$  & $\Delta_3^{pr}(s_i)$    \\ \hline    
1      & 40.75      &    $35.45$   &  40.75& 34.95   \\ \hline  
2       & $19.34$   &   $35.19$   &16.81  &51.76     \\ \hline
3       &  $0.59$   &     $0.84$  &9.1 & 9.1\\  \hline
4        & $0.31$   &    $0.4$    &0.21 & 0.24  \\ \hline 
5      &  $0.05$          &  $0.05$     &0.03 &  0.03     \\ \hline
6      &  $0.002$          &     $0.002$  &0.0016 & 0.0016\\  \hline
7      &  $8.27\times 10^{-4}$  &     $8.27\times 10^{-4}$ &$7.57\times 10^{-4}$ &$7.57\times 10^{-4}$ \\  \hline
  \end{tabular}
\end{center}
\end{table}
We further validate the ROM obtained by the error estimators at samples in $\Xi_{ver}$ including 600 samples randomly taken from $[0,1~\text{MHz}]$, the results are presented in Table~\ref{error__CD player_eff} and plotted in Figures~\ref{fig:CD player_estr1}-\ref{fig:CD player_est3pr}. In Table~\ref{error__CD player_eff}, we compare the effectivity defined as $\textrm{eff}(s):=\Delta(s)/\varepsilon(s)$, the ratio between the given error estimator and its corresponding true error. $\tilde \Delta_r(s)$ still underestimates the true error at most samples. $\Delta_1^{pr}(s)$, $\Delta_2(s)$, $\Delta_3(s)$, $\Delta_3^{pr}(s)$ are equally well, whereas $\Delta_1(s)$ and $\Delta_2^{pr}(s)$ underestimate
the true error too much ($\min(\textrm{eff})<0.1$) at some samples. However, we observe that underestimation happens only at samples with very small true errors $\varepsilon(s)$ being smaller than $10^{-11}$ which may be caused by rounding errors. If we check the error estimators only at true errors larger than  $10^{-11}$, then we obtain the last two columns in the table, which show that except for $\tilde \Delta_r(s)$ the other estimators are tight. 

Figure~\ref{fig:CD player_estr1} further shows the inaccuracy of $\tilde \Delta_r(s)$ validated at the 600 samples in $\Xi_{ver}$. $\Delta_1(s)$ in Figure~\ref{fig:CD player_est1pr} behaves slightly worse than the other proposed estimators, see Figures~\ref{fig:CD player_est2pr}-\ref{fig:CD player_est3pr}. In the following, we will omit the results of $\tilde \Delta_r(\tilde \mu)$ for the other examples, since it is always worse than the others. 
\begin{table}[h]
\begin{center}
\caption{CD player, effectivity of the error estimators.}
\label{error__CD player_eff}
\begin{tabular}{|c||c|c||c|c|} \hline
\multirow{2}{*}{Estimator}  & \multicolumn{2}{|c||}{For all $\varepsilon(s)$} &  \multicolumn{2}{|c|}{For $\varepsilon(s) \geq 10^{-11}$} \\ \cline{2-5}  
& $\min\limits_{s\in \Xi_{ver}}(\textrm{eff})$   &    $\max\limits_{s\in \Xi_{ver}}(\textrm{eff})$ & $\min\limits_{s\in \Xi_{ver}}(\textrm{eff})$   & $\max\limits_{s\in \Xi_{ver}}(\textrm{eff})$  \\ \hline 
$\tilde \Delta_r$  & 0.09 &1.82 &0.26 &0.26 \\ \hline   
$\Delta_1$      & 0.02    &    $80$  & 0.9211&1.1785\\ \hline  
$\Delta_1^{pr}$      & 0.28   &   20.39 &0.9988 &1.0046 \\ \hline
$\Delta_2$       &  $0.12$   &     $17.32$ &0.9987 &1.1653\\  \hline
$\Delta_2^{pr}$         & $0.02$   &    $80$&1.0000 &1.3643  \\ \hline 
$\Delta_3$       &  $0.12$   &     $10.97$ &0.9993 &1.0004\\  \hline
$\Delta_3^{pr}$         & $0.1$   &    $9.13$ &0.9998 &5.31 \\ \hline 
  \end{tabular}
\end{center}
\end{table}
\begin{figure}[h]
\centering
\includegraphics[width=75mm]{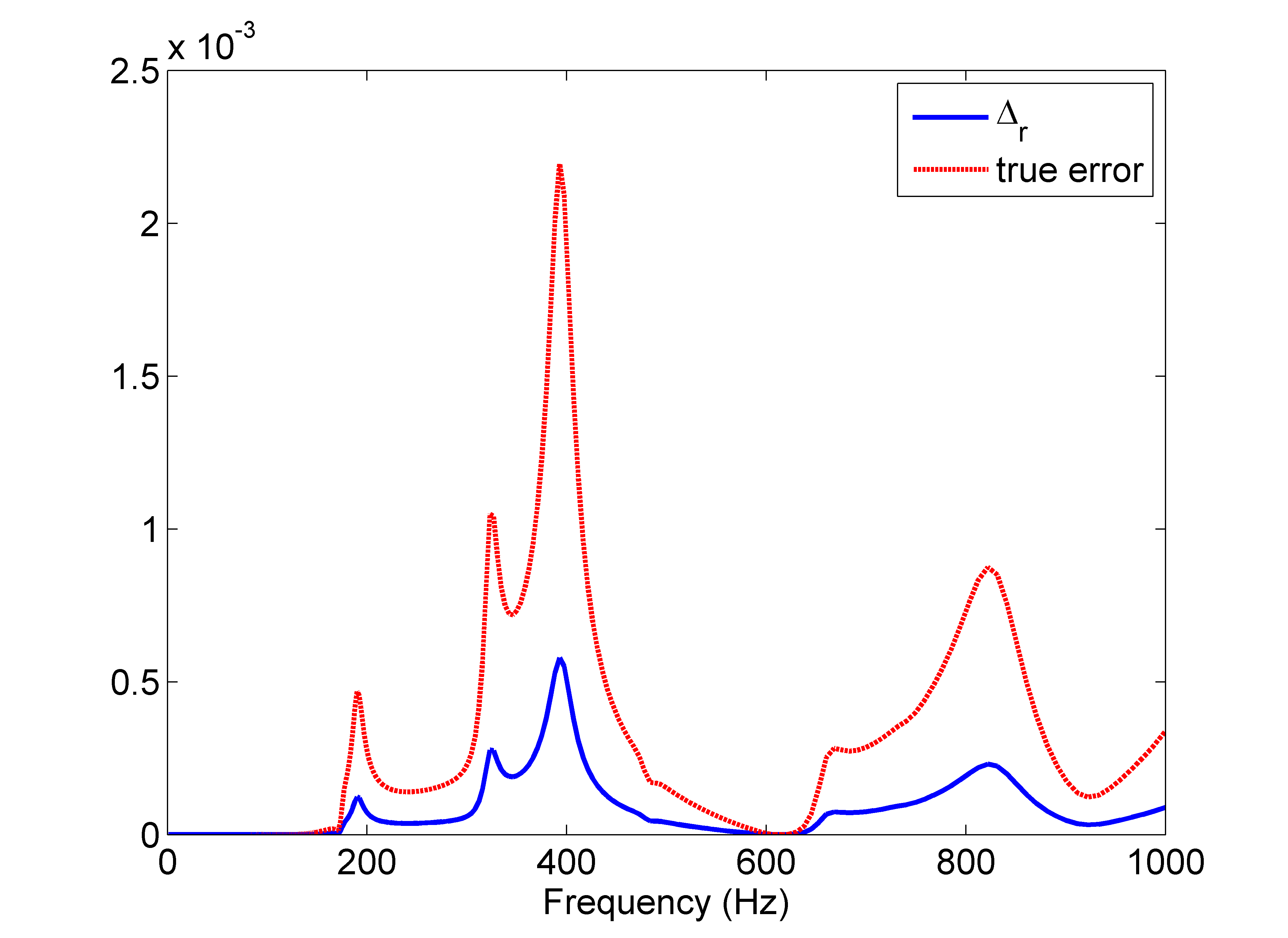}\quad
\includegraphics[width=75mm]{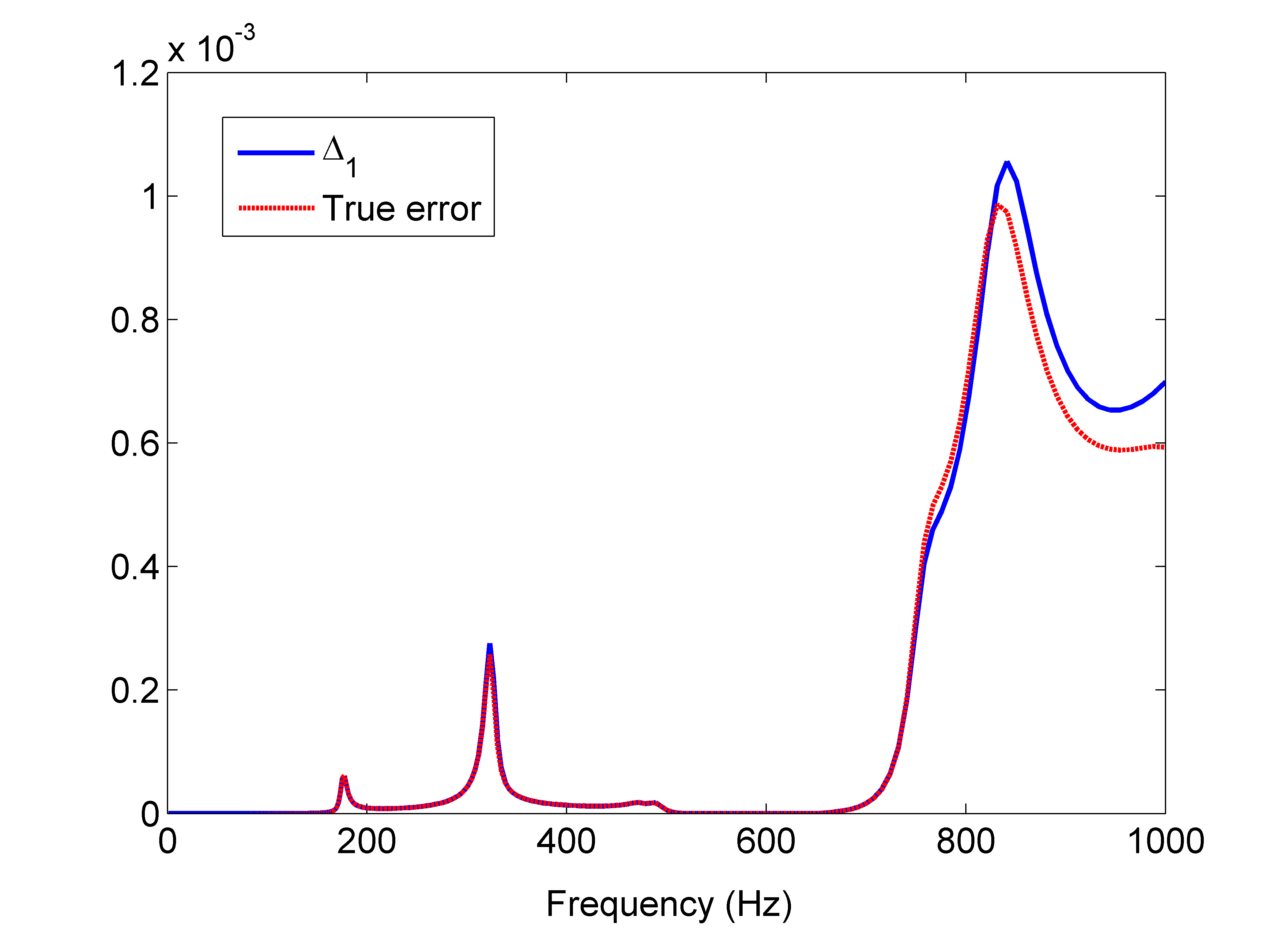} 
\caption{CD player: $\Delta_r(s)$ and $\Delta_1(s)$ vs. the respective true errors at 600 frequency samples.}
\label{fig:CD player_estr1}
\end{figure}
\begin{figure}[h]
\centering
\includegraphics[width=75mm]{CD_player_est1.png}\quad
\includegraphics[width=75mm]{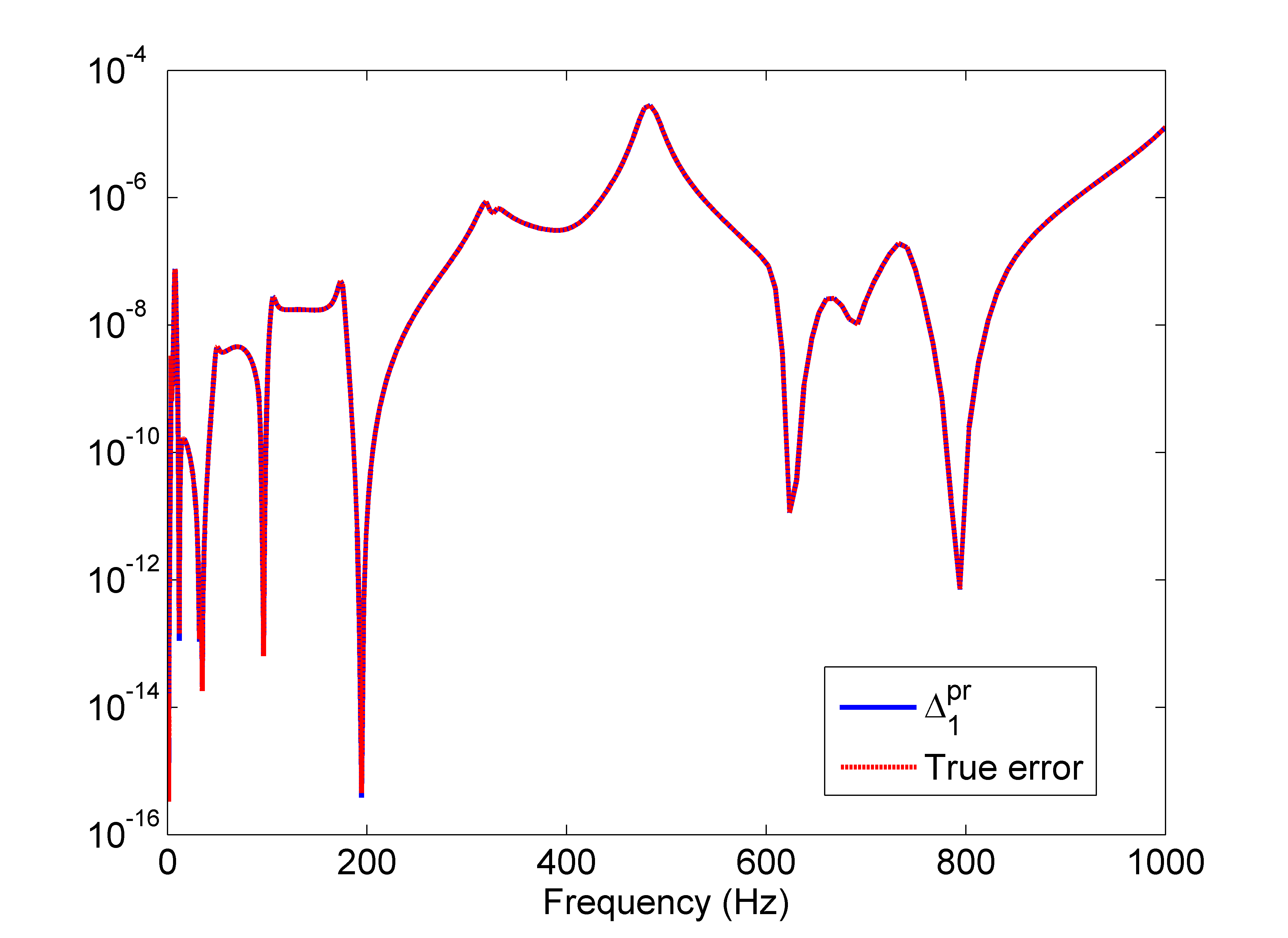} 
\caption{CD player: $\Delta_1(s)$ and $\Delta_1^{pr}(s)$ vs. the respective true errors at 600 frequency samples.}
\label{fig:CD player_est1pr}
\end{figure}
\begin{figure}[h]
\centering
\includegraphics[width=75mm]{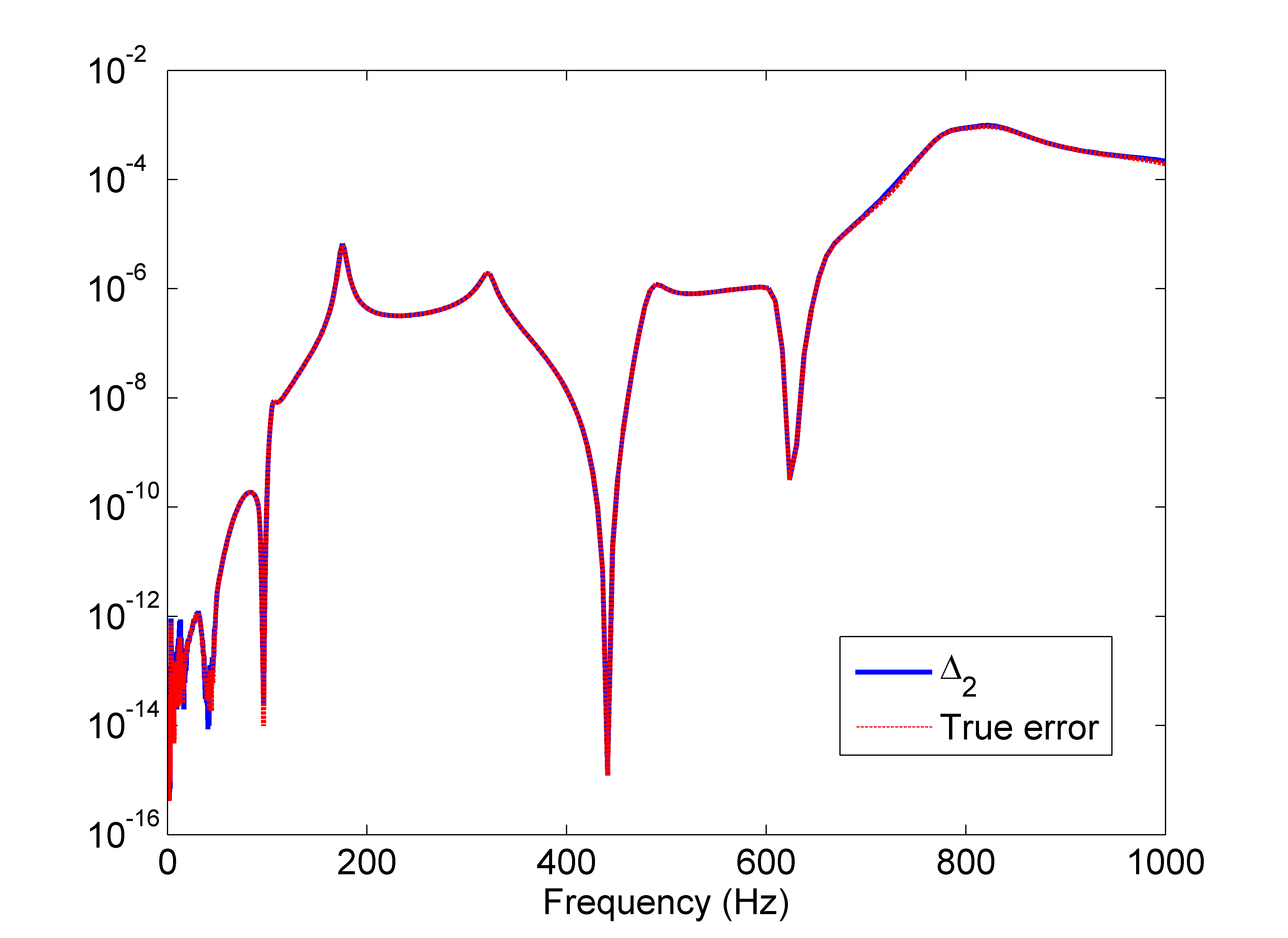}\quad
\includegraphics[width=75mm]{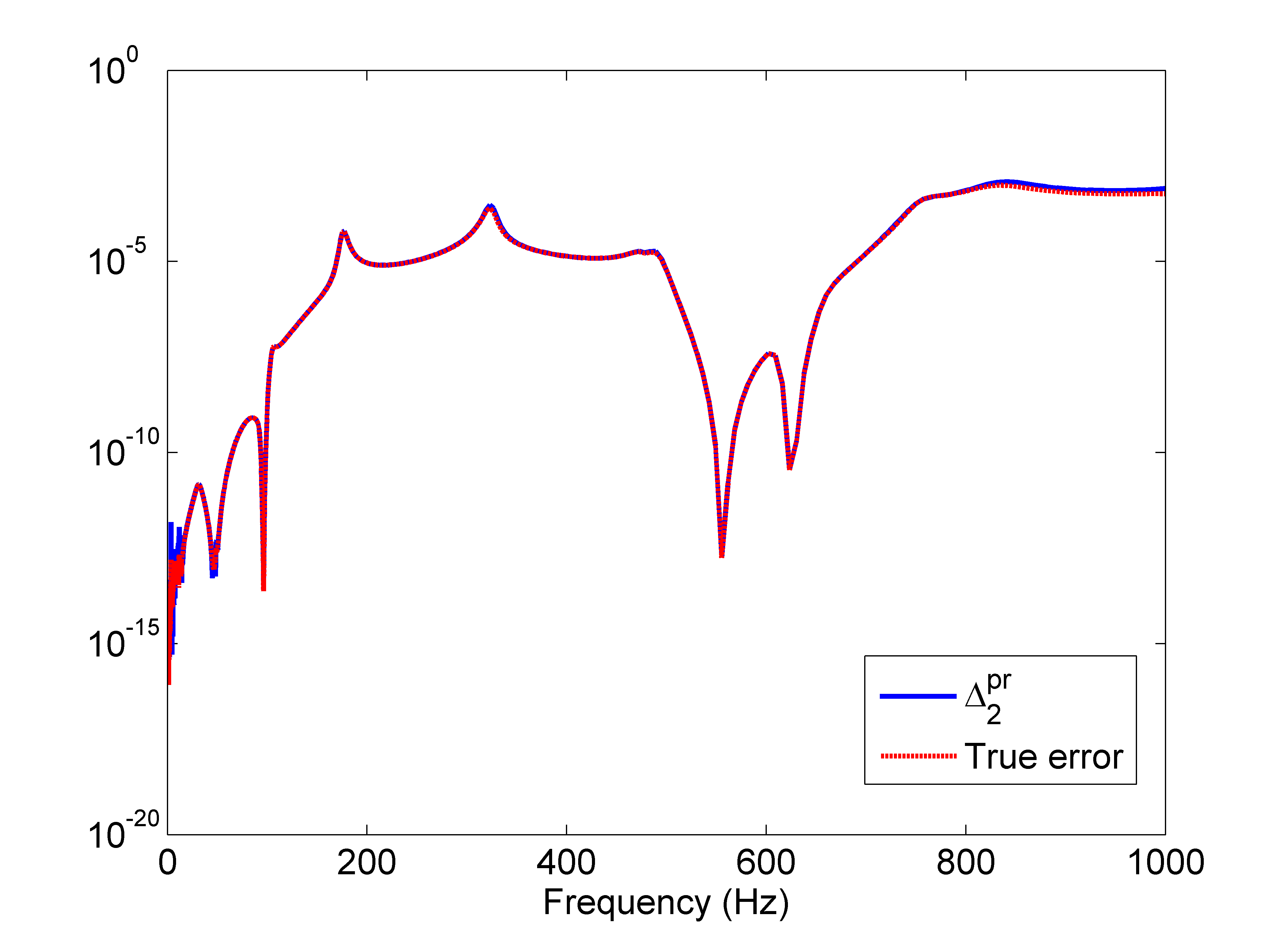}
\caption{CD player: $\Delta_2(s)$ and $\Delta_2^{pr}(s)$ vs. the respective true errors at 600 frequency samples .}
\label{fig:CD player_est2pr}
\end{figure}
\begin{figure}[h]
\centering
\includegraphics[width=75mm]{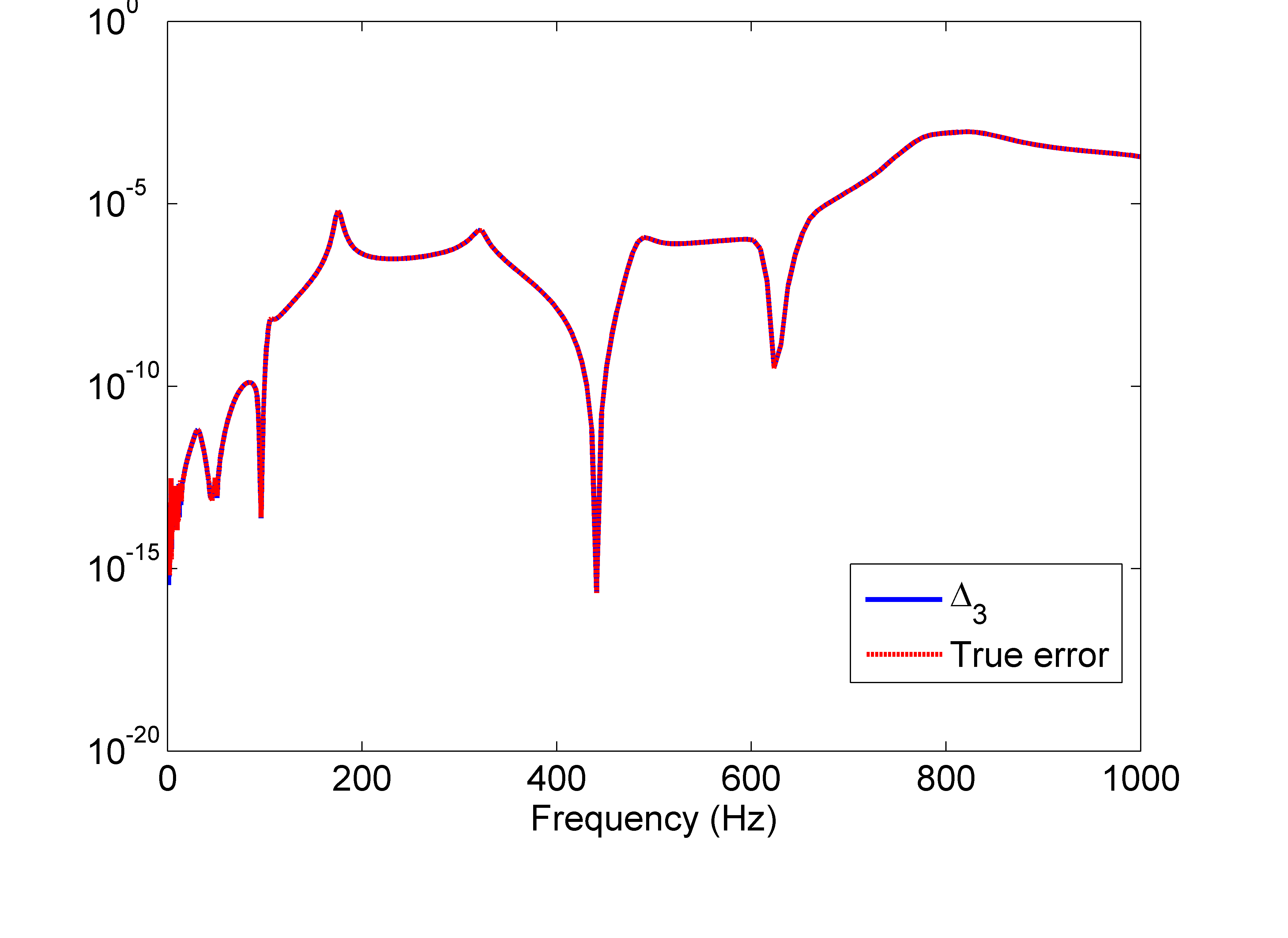}\quad
\includegraphics[width=75mm]{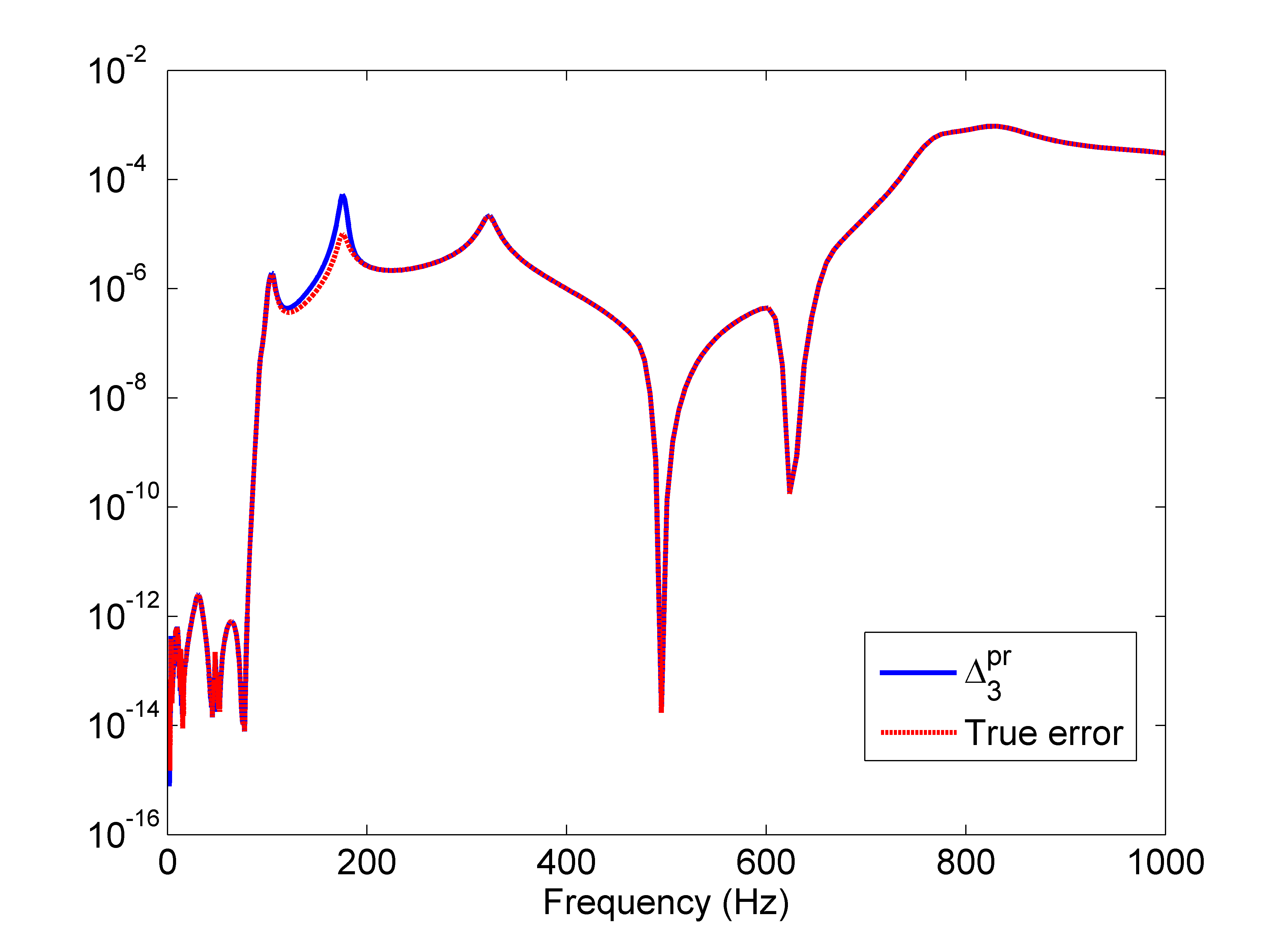}
\caption{CD player: $\Delta_3(s)$ and $\Delta_3^{pr}(s)$ vs. the respective true errors at 600 frequency samples.}
\label{fig:CD player_est3pr}
\end{figure}

\subsection{The RLC tree model}
We use a training set $\Xi$ with 90 frequency samples covering the whole frequency range $[0,3\text{~GHz}]$. The samples $s_i$ are taken using the function $f_i=3\times 10^{i/10}, s_i=2 \pi \jmath, i=1,\ldots, 90$. Here, $\jmath$ is the imaginary unit.  The results of the greedy algorithm using different error estimators are listed in Tables~\ref{error_RLC_est1pr}-\ref{error_RLC_est3pr}. $\Delta_1(s)$ always underestimates the true error, and finally it
makes the greedy algorithm stop before the true error $\varepsilon_{\max}$ is below the tolerance. The other estimators behave like tight upper bounds for the true error in this example, especially $\Delta_3(s)$ and $\Delta_3^{pr}(s)$ which actually measure the true error almost exactly
at the last two iterations.  
\begin{table}[h]
\begin{center}
\caption{RLCtree, $\varepsilon_{tol}=10^{-3}$, $q=3$,  $r=12 (\Delta_1)$, $r=20 (\Delta_1^{pr})$.}
\label{error_RLC_est1pr}
\begin{tabular}{|c||c|c||c|c||} \hline
iteration $i$ & $\varepsilon_{\max} (\Delta_1)$   &  $\Delta_1(s_i)$ & $\varepsilon_{\max} (\Delta_1^{pr})$ &   $\Delta_1^{pr}(s_i)$   \\ \hline    
1       & 0.19    &    $0.01$  &  0.19 &0.22   \\ \hline  
2   & $0.06$  &     $0.006$  &0.02&0.02   \\ \hline
3      &  --- &     --- &$2.54 \times 10^{-6}$& $2.55 \times 10^{-6}$\\  \hline
  \end{tabular}
\end{center}
\end{table}
\begin{table}[h]
\begin{center}
\caption{RLCtree, $\varepsilon_{tol}=10^{-3}$, $q=3$,  $r=20 (\Delta_2)$, $r=19 (\Delta_2^{pr})$.}
\label{error_RLC_est2pr}
\begin{tabular}{|c||c|c||c|c||} \hline
iteration $i$ & $\varepsilon_{\max} (\Delta_2)$   &  $\Delta_2(s_i)$ & $\varepsilon_{\max} (\Delta_2^{pr})$ &   $\Delta_2^{pr}(s_i)$   \\ \hline    
1       & 0.19    &    $0.63$  &  0.19 &0.22   \\ \hline  
2   & $0.02$  &     $0.06$  &0.02&0.05  \\ \hline
3      &  $6.13 \times 10^{-6}$&  $6.45 \times 10^{-6}$&$2.25 \times 10^{-5}$& $1.05 \times 10^{-4}$\\  \hline
  \end{tabular}
\end{center}
\end{table}
\begin{table}[h]
\begin{center}
\caption{RLCtree, $\varepsilon_{tol}=10^{-3}$, $q=3$,  $r=20$.}
\label{error_RLC_est3pr}
\begin{tabular}{|c||c|c||c|c||} \hline
iteration $i$ & $\varepsilon_{\max} (\Delta_3)$   &  $\Delta_3(s_i)$ & $\varepsilon_{\max} (\Delta_3^{pr})$ &   $\Delta_3^{pr}(s_i)$   \\ \hline    
1       & 0.19    &    $0.22$  &  0.19 &0.29   \\ \hline  
2   & $0.02$  &     $0.02$  &0.02&0.02   \\ \hline
3      &  $2.54 \times 10^{-6}$ &   $2.55\times 10^{-6}$ &$2.54 \times 10^{-6}$& $2.54 \times 10^{-6}$\\  \hline
  \end{tabular}
\end{center}
\end{table}
The derived ROMs using different error estimators are validated on a validation set $\Xi_{ver}$ with 900 samples in the interesting frequency range. The effectivity of every error estimator is listed in Table~\ref{tab:RLC_eff}. If consider the overall effectivity, then all the estimators underestimate the true error too much except for $\Delta_2^{pr}(s)$. However, if only consider true errors which are bigger than $10^{-11}$, then $\Delta_1^{pr}(s)$, $\Delta_3(s)$ and $\Delta_3^{pr}(s)$ are the best ones, $\Delta_2(s)$ is also good, $\Delta_2^{pr}(s)$ overestimate the true error more than many others. It is clear that $\Delta_1(s)$ is not a good error estimator any more. Figures~\ref{fig:RLC_est1pr}-\ref{fig:RLC_est3pr} further show the behaviors of the error estimators over the sample set $\Xi_{ver}$ including 900 samples, which are in agreement with the above analysis for the data in Table~\ref{tab:RLC_eff}.

\begin{table}[h]
\begin{center}
\caption{RLCtree, effectivity of the error estimators.}
\label{tab:RLC_eff}
\begin{tabular}{|c||c|c||c|c|} \hline
\multirow{2}{*}{Estimator}  & \multicolumn{2}{|c||}{For all $\varepsilon(s)$} &  \multicolumn{2}{|c|}{For $\varepsilon(s) \geq 10^{-11}$} \\ \cline{2-5}  
& $\min\limits_{s\in \Xi_{ver}}(\textrm{eff})$   &    $\max\limits_{s\in \Xi_{ver}}(\textrm{eff})$ & $\min\limits_{s\in \Xi_{ver}}(\textrm{eff})$   & $\max\limits_{s\in \Xi_{ver}}(\textrm{eff})$  \\ \hline 
$\Delta_1$      & 0.002    &    $285$  & 0.006&132\\ \hline  
$\Delta_1^{pr}$      & 0.002  &   253&0.9001 &1.0826 \\ \hline
$\Delta_2$       &  $0.004$   &     $244$ &0.37 &51\\  \hline
$\Delta_2^{pr}$         & $0.56$   &    $102$&0.68 &102  \\ \hline 
$\Delta_3$       &  $0.008$   &     $258$ &0.9 &1.2337\\  \hline
$\Delta_3^{pr}$         & $0.008$   &    $258$ &0.9 &1.0894 \\ \hline 
  \end{tabular}
\end{center}
\end{table}
\begin{figure}[h]
\centering
\includegraphics[width=75mm]{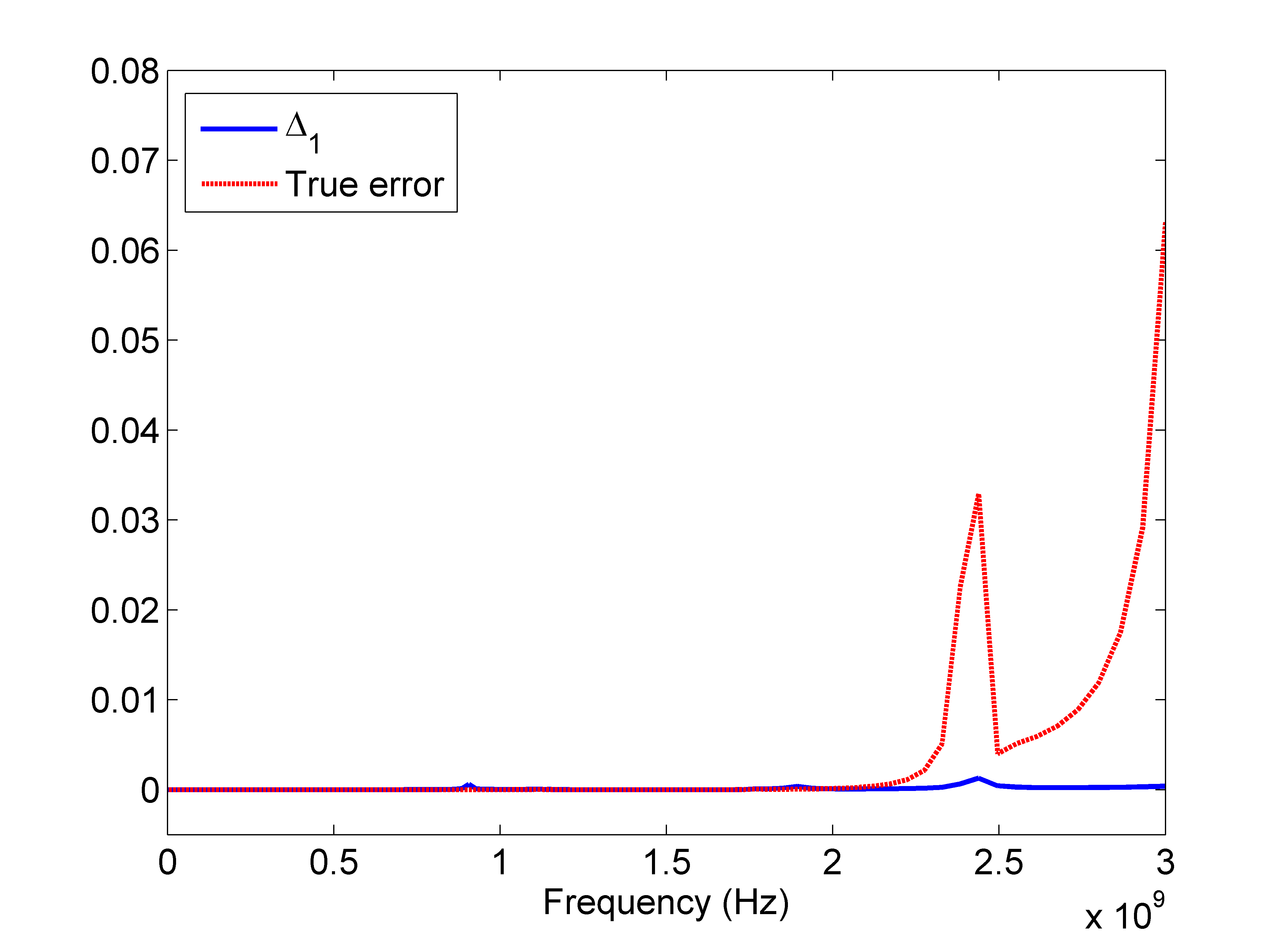}\quad
\includegraphics[width=75mm]{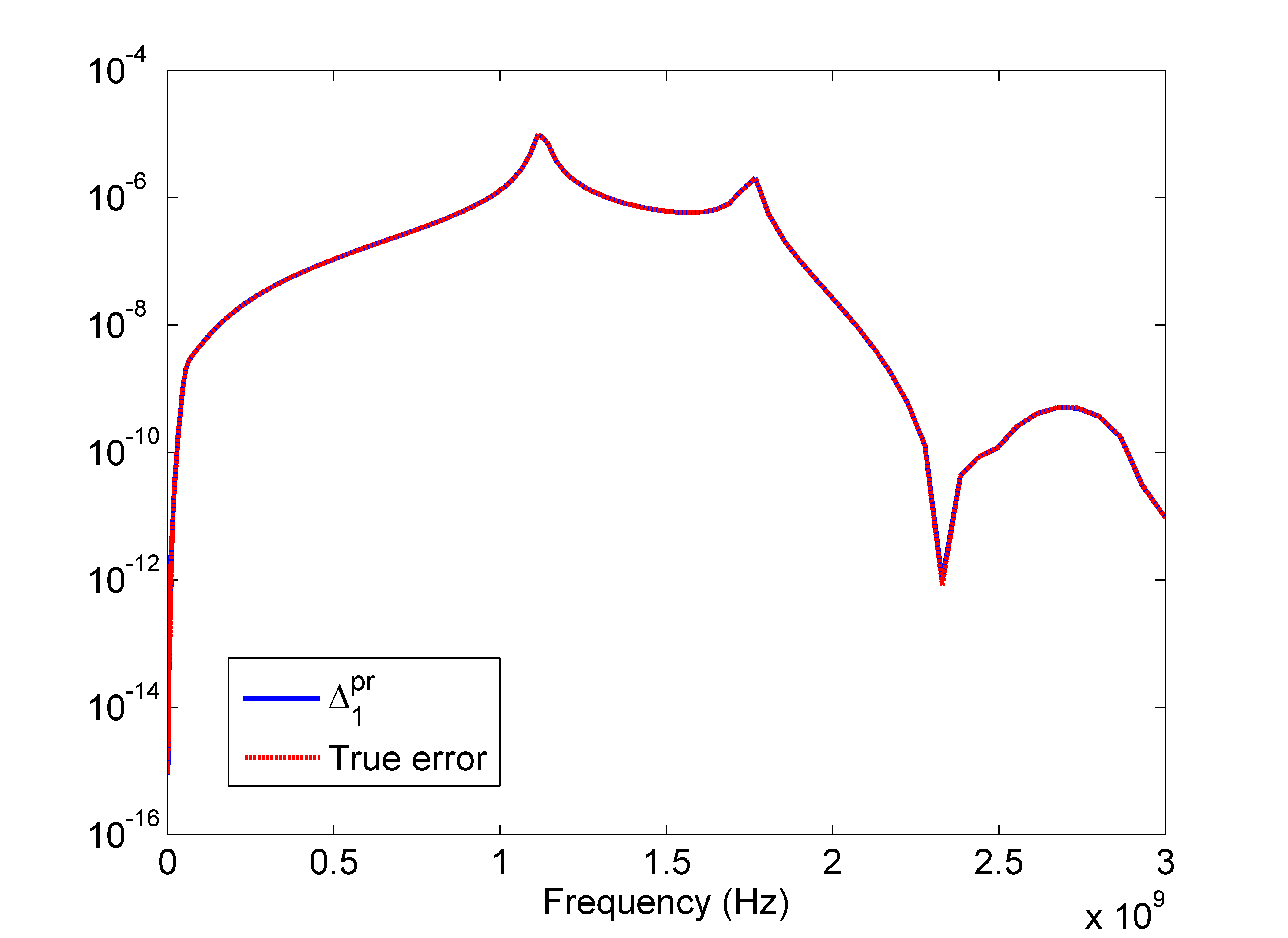} 
\caption{RCLtree: $\Delta_1(s)$ and $\Delta_1^{pr}(s)$ vs. the respective true errors at 900 frequency samples.}
\label{fig:RLC_est1pr}
\end{figure}
\begin{figure}[h]
\centering
\includegraphics[width=75mm]{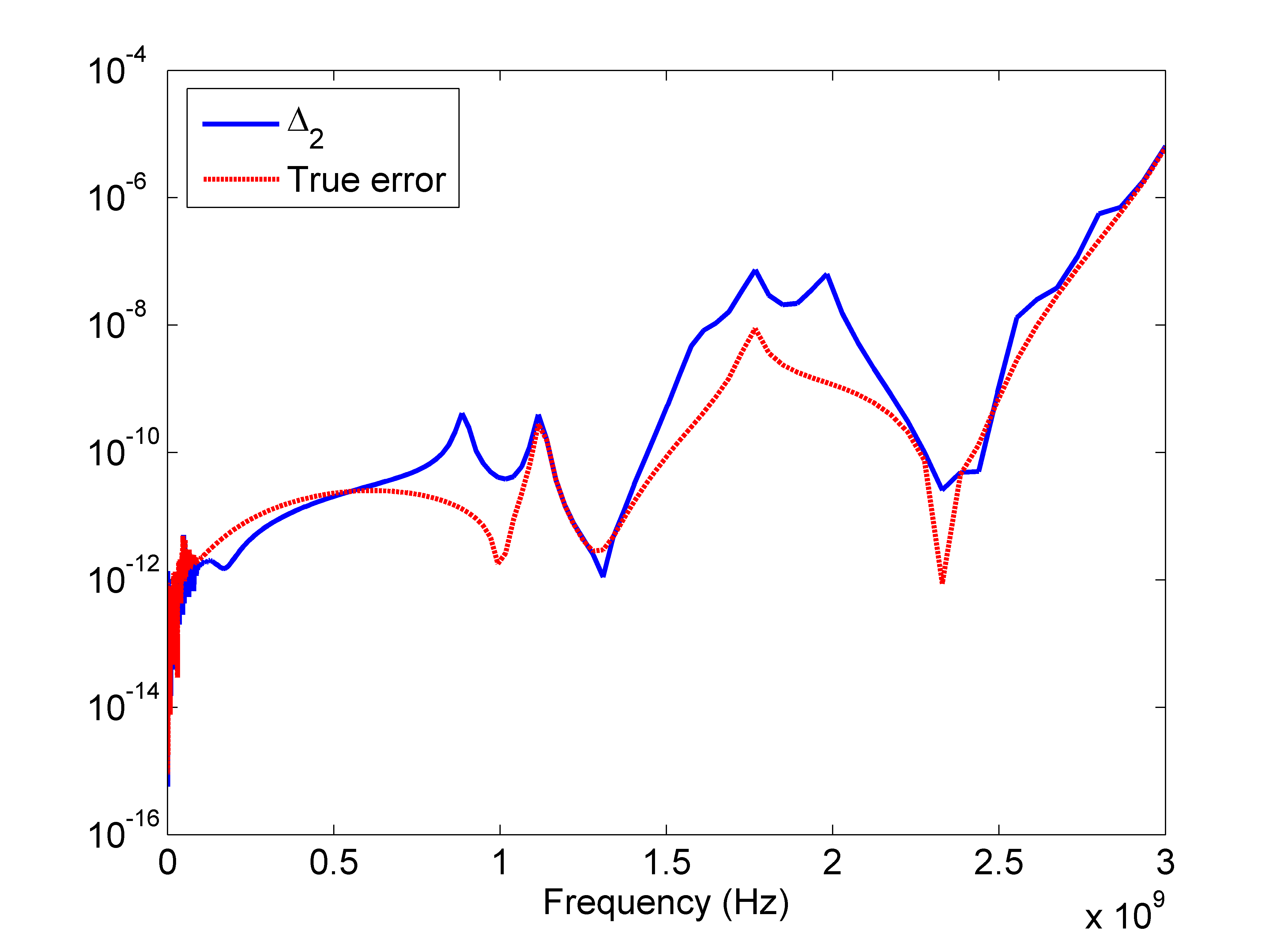}\quad
\includegraphics[width=75mm]{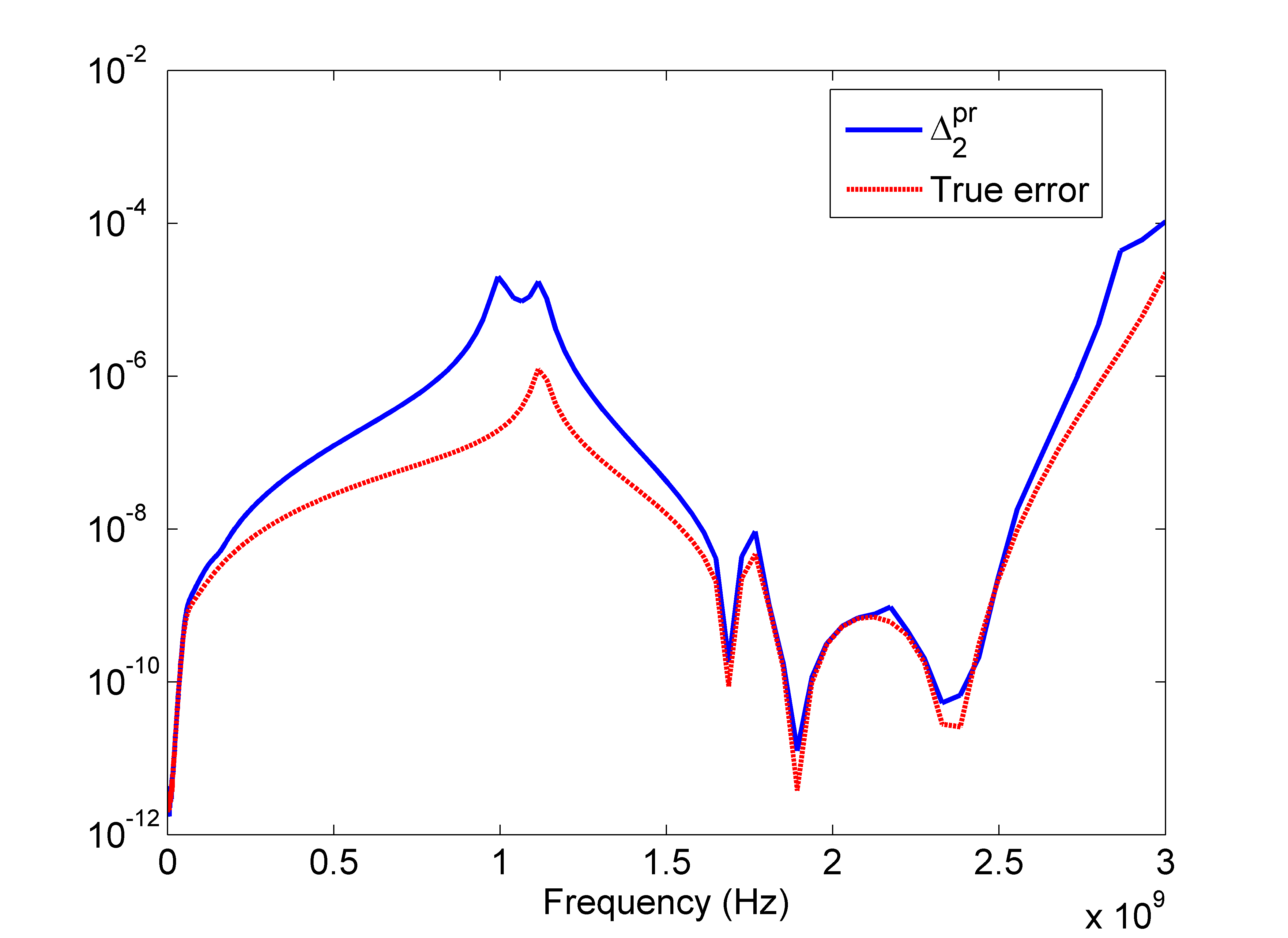}
\caption{RLCtree: $\Delta_2(s)$ and $\Delta_2^{pr}(s)$ vs. the respective true errors at 900 frequency samples .}
\label{fig:RLC_est2pr}
\end{figure}
\begin{figure}[h]
\centering
\includegraphics[width=75mm]{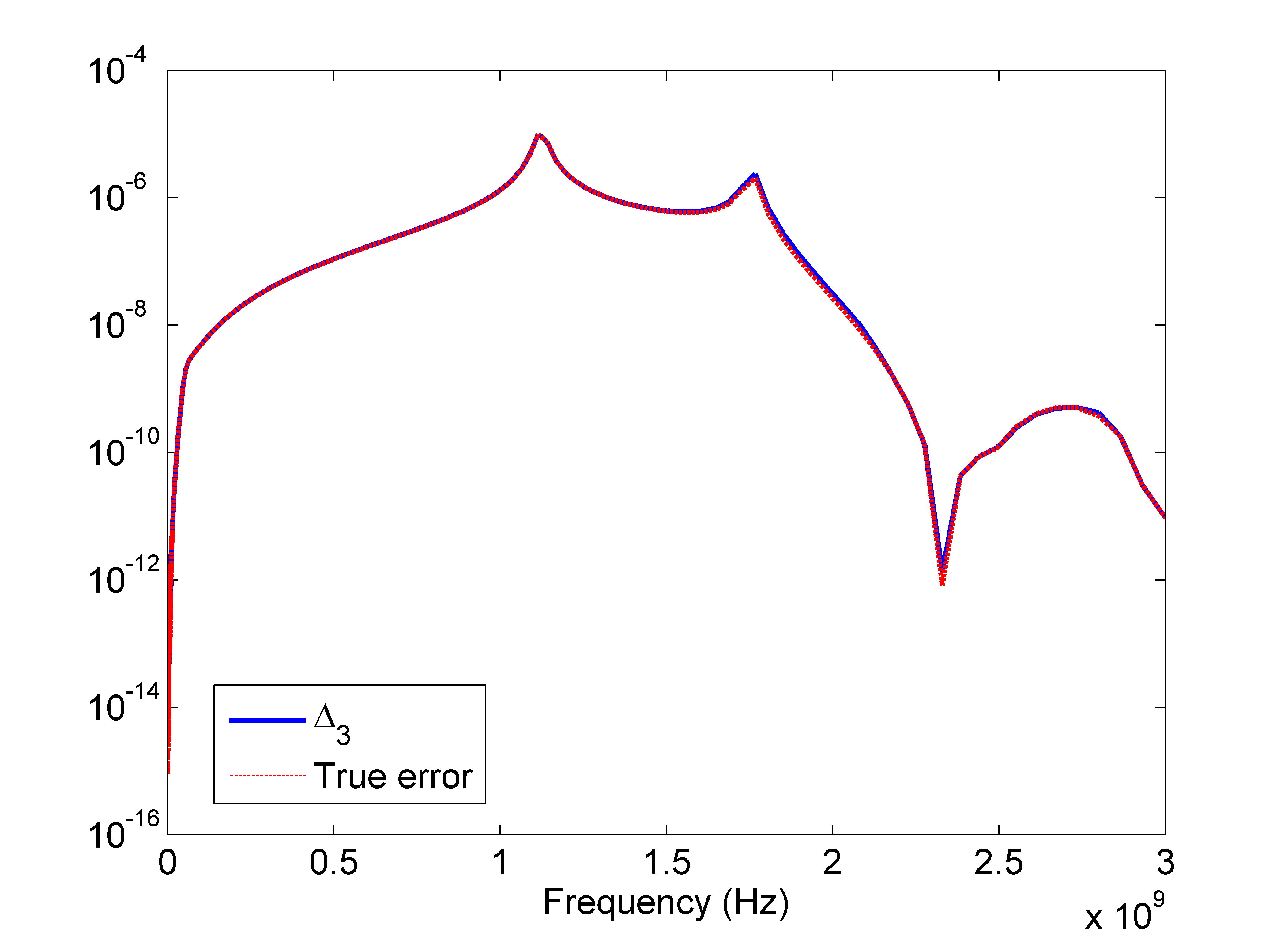}\quad
\includegraphics[width=75mm]{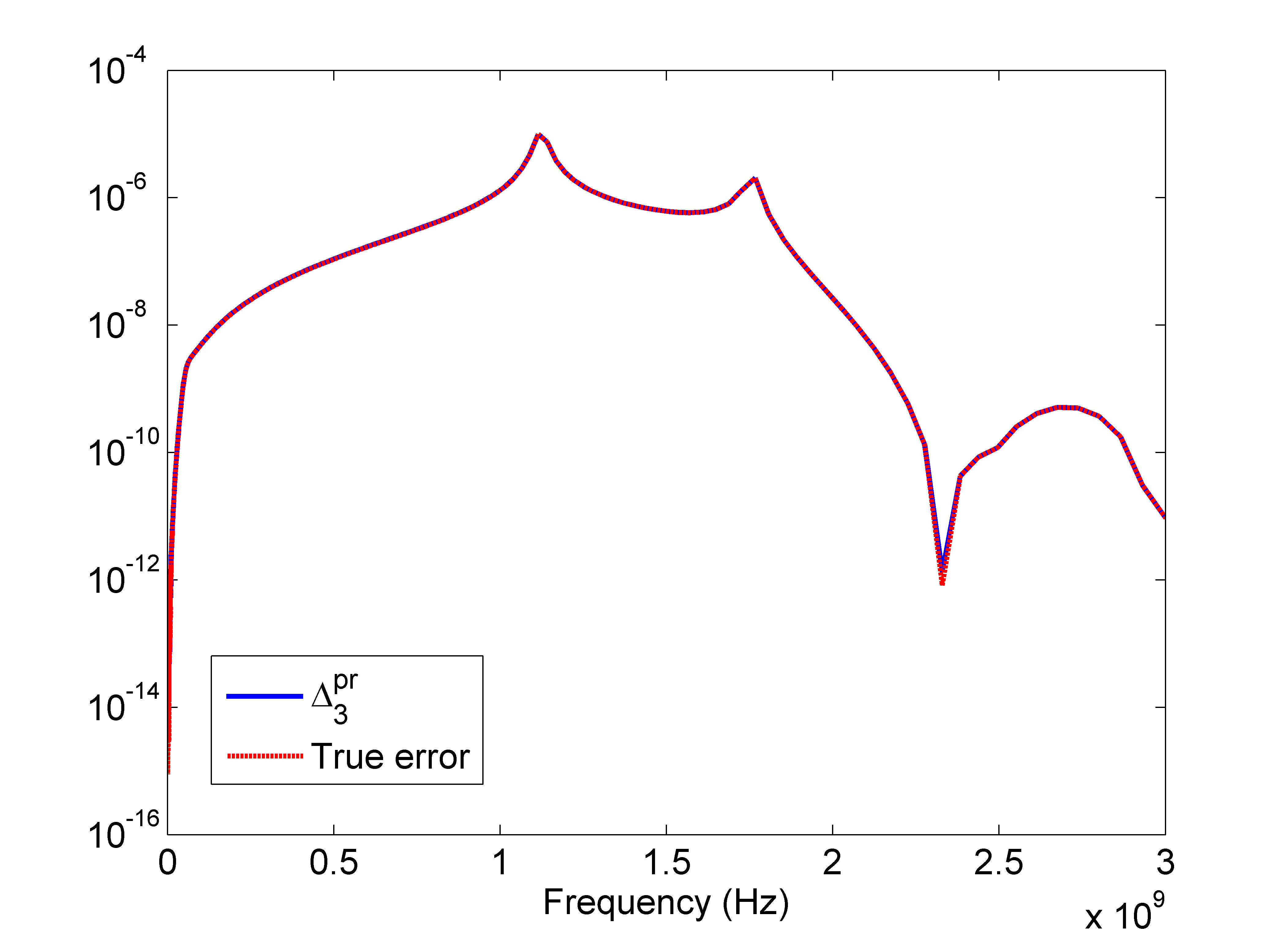}
\caption{RLCtree: $\Delta_3(s)$ and $\Delta_3^{pr}(s)$ vs. the respective true errors at 900 frequency samples.}
\label{fig:RLC_est3pr}
\end{figure}
\subsection{MIMO example}
This example has the same frequency range as the second example, therefore we use the same $\Xi$ as for the RLC tree model. The error estimator is the maximal error estimator defined as
$$\Delta(s)=\max\limits_{ij} \Delta_{ij}(s),$$
where $\Delta_{ij}(s)$ estimates the true error $\epsilon_{ij}(s)=|H_{ij}(s)-\hat H_{ij}(s)|$. Here $H_{ij}(s)$ and $\hat H_{ij}(s)$ are the transfer functions corresponding to the $i$-th input port and $j$-th output port of the original model and the ROM, respectively. 
The true error is the maximal true error $\epsilon(s)=\max\limits_{ij}|\epsilon_{ij}(s)|$, and $\epsilon_{\textrm{max}}=\max\limits_{s\in \Xi} \epsilon(s)$ as defined before. 

The results of Algorithm~\ref{alg:greedy_nonpara} using different error estimators are listed in Tables~\ref{error_MIMO_est1pr}-\ref{error_MIMO_est3pr}. Algorithm~\ref{alg:greedy_nonpara} stops before the true error $\epsilon_{\max}$ is below the tolerance when using $\Delta_1(s)$, whereas $\Delta_1^{pr}(s)$, $\Delta_3(s)$ and $\Delta_3^{pr}(s)$ exactly estimate the true error at each iteration step. $\Delta_2(s)$ and its primal variation  $\Delta_2^{pr}(s)$ produce the same results and make the algorithm converge in 3 iterations. Note that $\Delta_3(s)$ and $\Delta_3^{pr}(s)$  also yield the same results.
\begin{table}[h]
\begin{center}
\caption{MIMO example, $\varepsilon_{tol}=10^{-3}$, $q=3$,  $r=20 (\Delta_1)$, $r=52 (\Delta_1^{pr})$.}
\label{error_MIMO_est1pr}
\begin{tabular}{|c||c|c||c|c||} \hline
iteration $i$ & $\varepsilon_{\max} (\Delta_1)$   &  $\Delta_1(s_i)$ & $\varepsilon_{\max} (\Delta_1^{pr})$ &   $\Delta_1^{pr}(s_i)$   \\ \hline    
1       & 0.28    &    $3.16 \times 10^{-5}$  &  0.28&0.28   \\ \hline  
2      &  --- &     --- &$5.91 \times 10^{-5}$& $5.91 \times 10^{-5}$\\  \hline
  \end{tabular}
\end{center}
\end{table}
\begin{table}[h]
\begin{center}
\caption{MIMO example, $\varepsilon_{tol}=10^{-3}$, $q=3$,  $r=73$.}
\label{error_MIMO_est2pr}
\begin{tabular}{|c||c|c||c|c||} \hline
iteration $i$ & $\varepsilon_{\max} (\Delta_2)$   &  $\Delta_2(s_i)$ & $\varepsilon_{\max} (\Delta_2^{pr})$ &   $\Delta_2^{pr}(s_i)$   \\ \hline    
1       & 0.28    &    0.28  &  0.28&0.28   \\ \hline  
2      & $5.91 \times 10^{-5}$&    $2.3 \times 10^{-3}$&$5.91 \times 10^{-5}$& $2.3 \times 10^{-3}$\\  \hline
3      &  $4.72\times 10^{-8}$&  $1.43 \times 10^{-7}$&$4.72\times 10^{-8}$& $1.43 \times 10^{-7}$\\  \hline
  \end{tabular}
\end{center}
\end{table}
\begin{table}[h]
\begin{center}
\caption{MIMO example, $\varepsilon_{tol}=10^{-3}$, $q=3$,  $r=52$.}
\label{error_MIMO_est3pr}
\begin{tabular}{|c||c|c||c|c||} \hline
iteration $i$ & $\varepsilon_{\max} (\Delta_3)$   &  $\Delta_3(s_i)$ & $\varepsilon_{\max} (\Delta_3^{pr})$ &   $\Delta_3^{pr}(s_i)$   \\ \hline    
1       & 0.28    &    0.28  &  0.28&0.28   \\ \hline  
2      & $5.91 \times 10^{-5}$&    $5.91 \times 10^{-5}$&$5.91 \times 10^{-5}$& $5.91 \times 10^{-5}$\\  \hline
  \end{tabular}
\end{center}
\end{table}

The ROMs constructed by Algorithm~\ref{alg:greedy_nonpara} using the error estimators are further validated over a validation set $\Xi_{ver}$ with 900 samples, respectively. 
Table~\ref{tab:MIMO_eff} lists the effectivity values of the error estimators. Among them, $\Delta_2(s)$ and its primal variation $\Delta_2^{pr}(s)$ are the best ones and have the same effectivity values. $\Delta_1^{pr}(s)$, $\Delta_3(s)$
and $\Delta_3^{pr}(s)$ have similar results and are still good.

Figures~\ref{fig:MNA4_est1pr}-\ref{fig:MNA4_est3pr} plot the error estimators and the corresponding true errors of the ROMs. The waveforms of the error estimators well reflect the data in Table~\ref{tab:MIMO_eff}. It is noticed that the maximal true errors over the validation sample set $\Xi_{ver}$ obtained by $\Delta_1^{pr}(s)$, $\Delta_3(s)$ and $\Delta_3^{pr}(s)$ are still bigger than the error tolerance, though they are exactly reproduced by the error estimators. Since the error estimators accurately measure the maximal true error, the ROMs can be further improved by adding one more expansion point from $\Xi_{ver}$ (rather than $\Xi$) at which the error estimators are maximal. This will certainly incur more computational costs. Therefore, $\Delta_2(s)$ and $\Delta_2^{pr}(s)$ outperform the other ones for this model.

\begin{table}[h]
\begin{center}
\caption{MIMO example, effectivity of the error estimators.}
\label{tab:MIMO_eff}
\begin{tabular}{|c||c|c||c|c|} \hline
\multirow{2}{*}{Estimator}  & \multicolumn{2}{|c||}{For all $\varepsilon(s)$} &  \multicolumn{2}{|c|}{For $\varepsilon(s) \geq 10^{-11}$} \\ \cline{2-5}  
& $\min\limits_{s\in \Xi_{ver}}(\textrm{eff})$   &    $\max\limits_{s\in \Xi_{ver}}(\textrm{eff})$ & $\min\limits_{s\in \Xi_{ver}}(\textrm{eff})$   & $\max\limits_{s\in \Xi_{ver}}(\textrm{eff})$  \\ \hline 
$\Delta_1$      & $ 8.78\times10^{-8}$   &    $2.53$  &  $ 8.78\times10^{-8}$&1.43\\ \hline  
$\Delta_1^{pr}$      & 0.1  &   40&0.2 &26 \\ \hline
$\Delta_2$       &  $0.1$   &     $5$ &0.2 &3.5\\  \hline
$\Delta_2^{pr}$         & $0.1$   &    $5$&0.2&3.5 \\ \hline 
$\Delta_3$       &  $0.1$   &     $25$ &0.2 &21\\  \hline
$\Delta_3^{pr}$         & $0.1$   &    $28$ &0.2 &25\\ \hline 
  \end{tabular}
\end{center}
\end{table}
\begin{figure}[h]
\centering
\includegraphics[width=75mm]{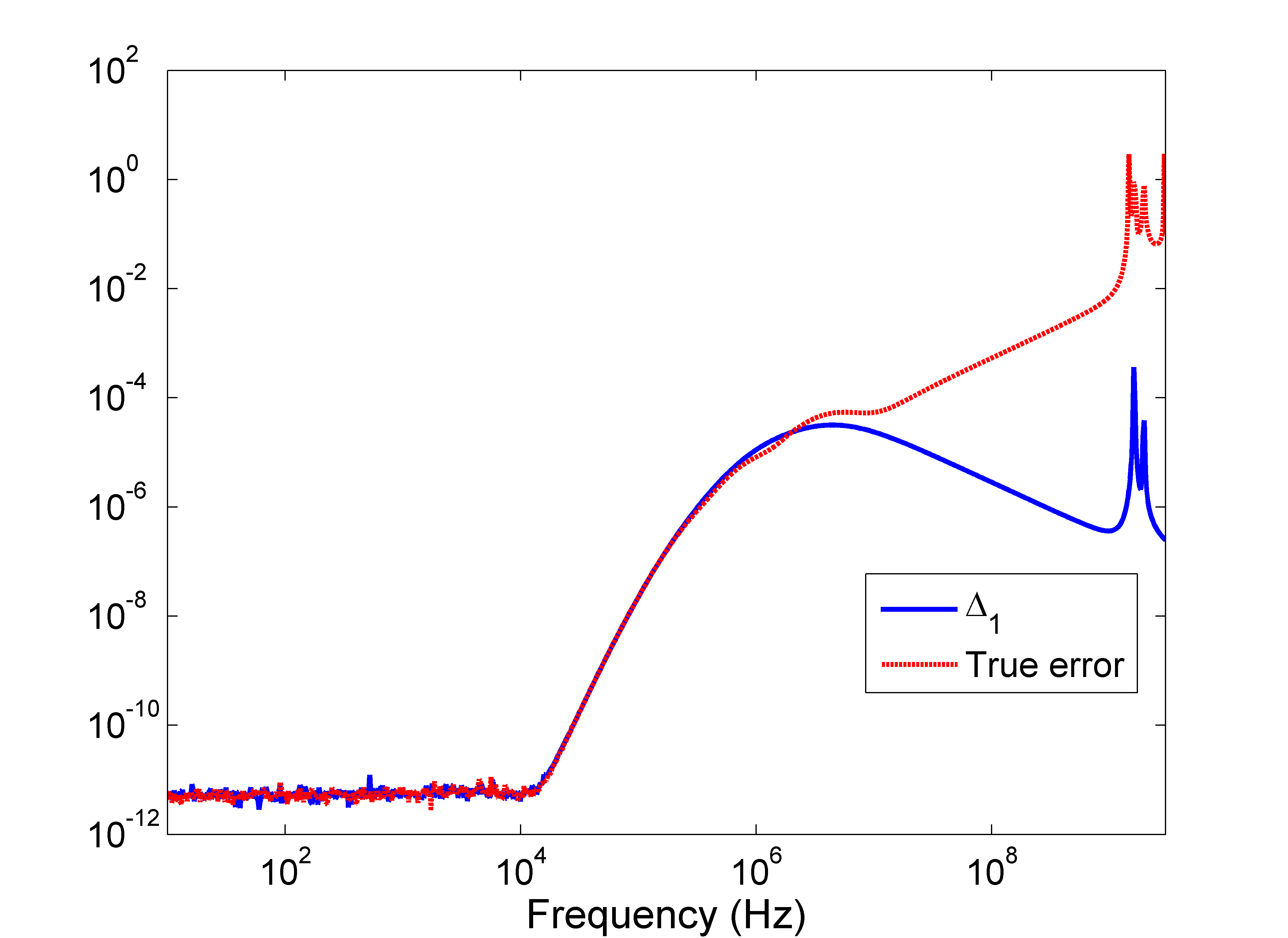}\quad
\includegraphics[width=75mm]{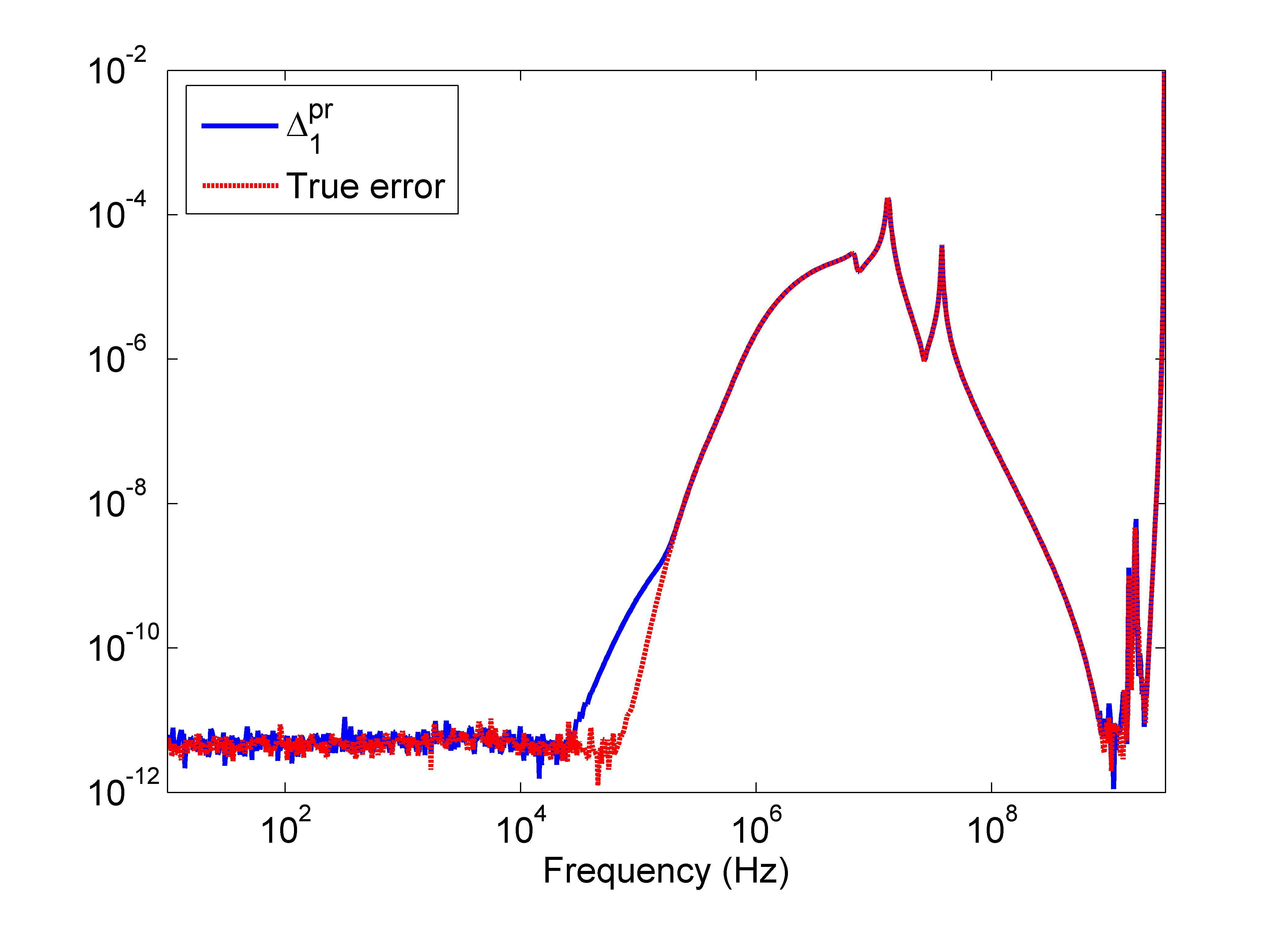} 
\caption{MIMO example: $\Delta_1(s)$ and $\Delta_1^{pr}(s)$ vs. the respective true errors at 900 frequency samples.}
\label{fig:MNA4_est1pr}
\end{figure}
\begin{figure}[h]
\centering
\includegraphics[width=75mm]{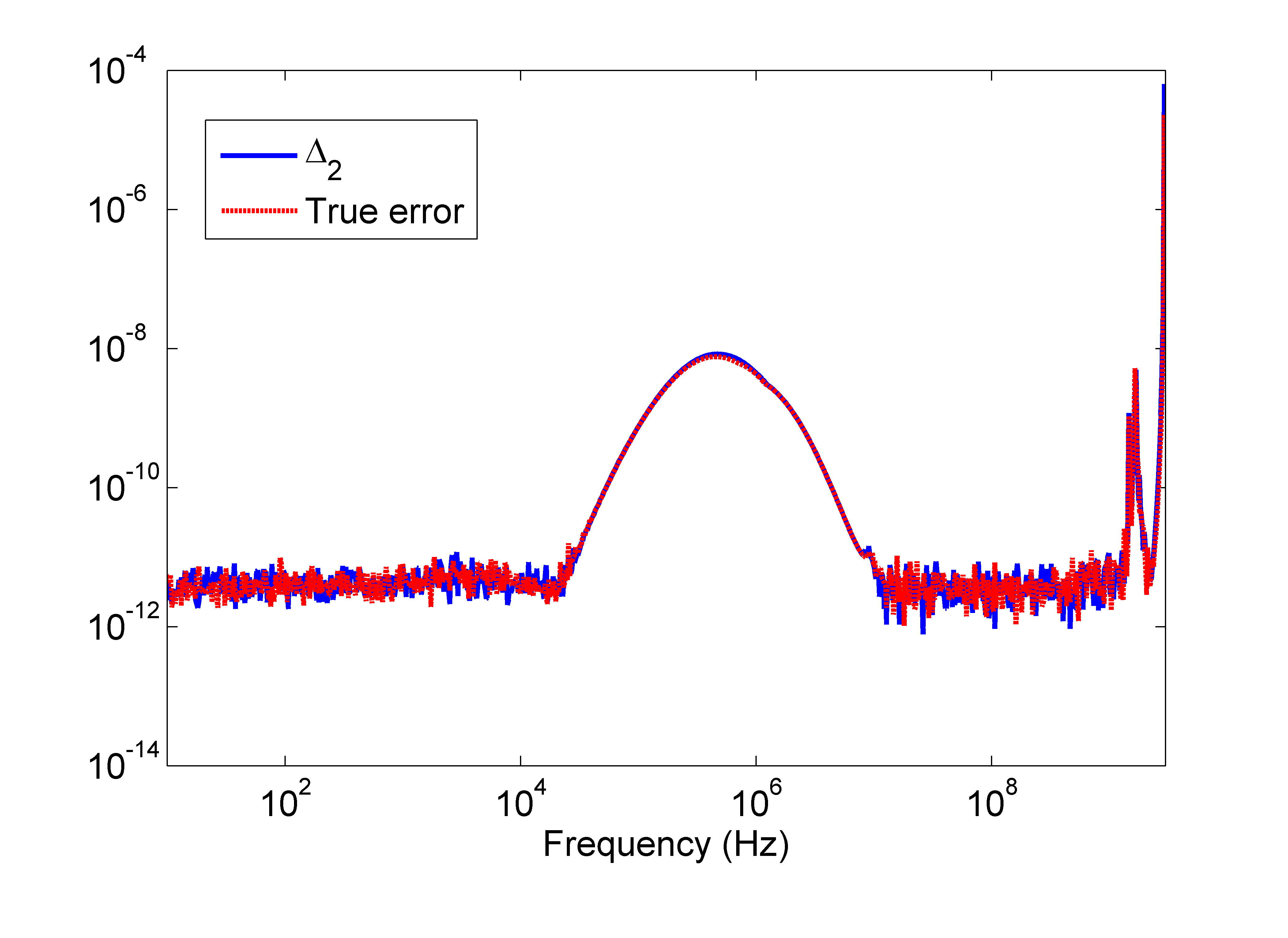}\quad
\includegraphics[width=75mm]{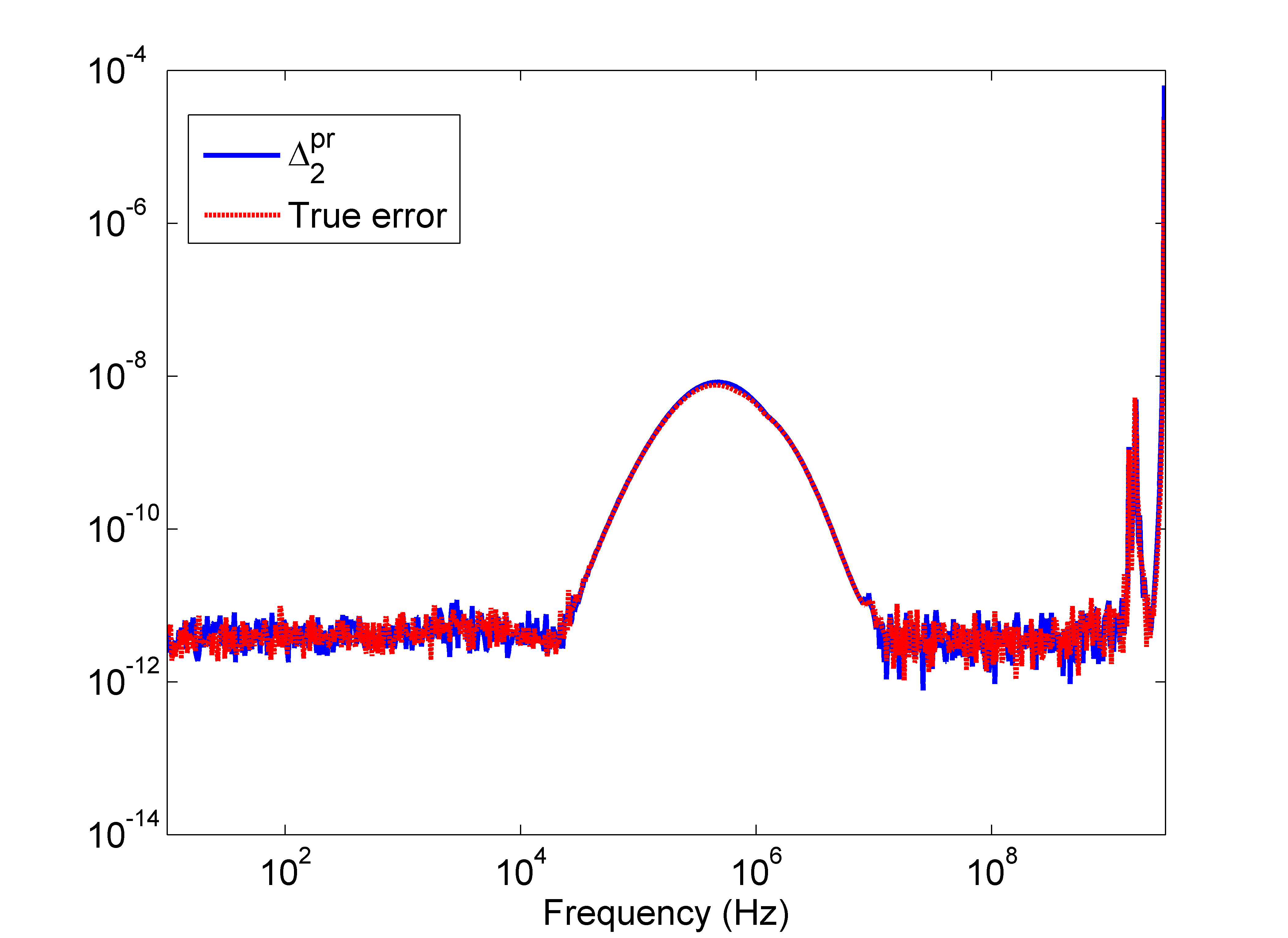}
\caption{MIMO example: $\Delta_2(s)$ and $\Delta_2^{pr}(s)$ vs. the respective true errors at 900 frequency samples .}
\label{fig:MNA4_est2pr}
\end{figure}
\begin{figure}[h]
\centering
\includegraphics[width=75mm]{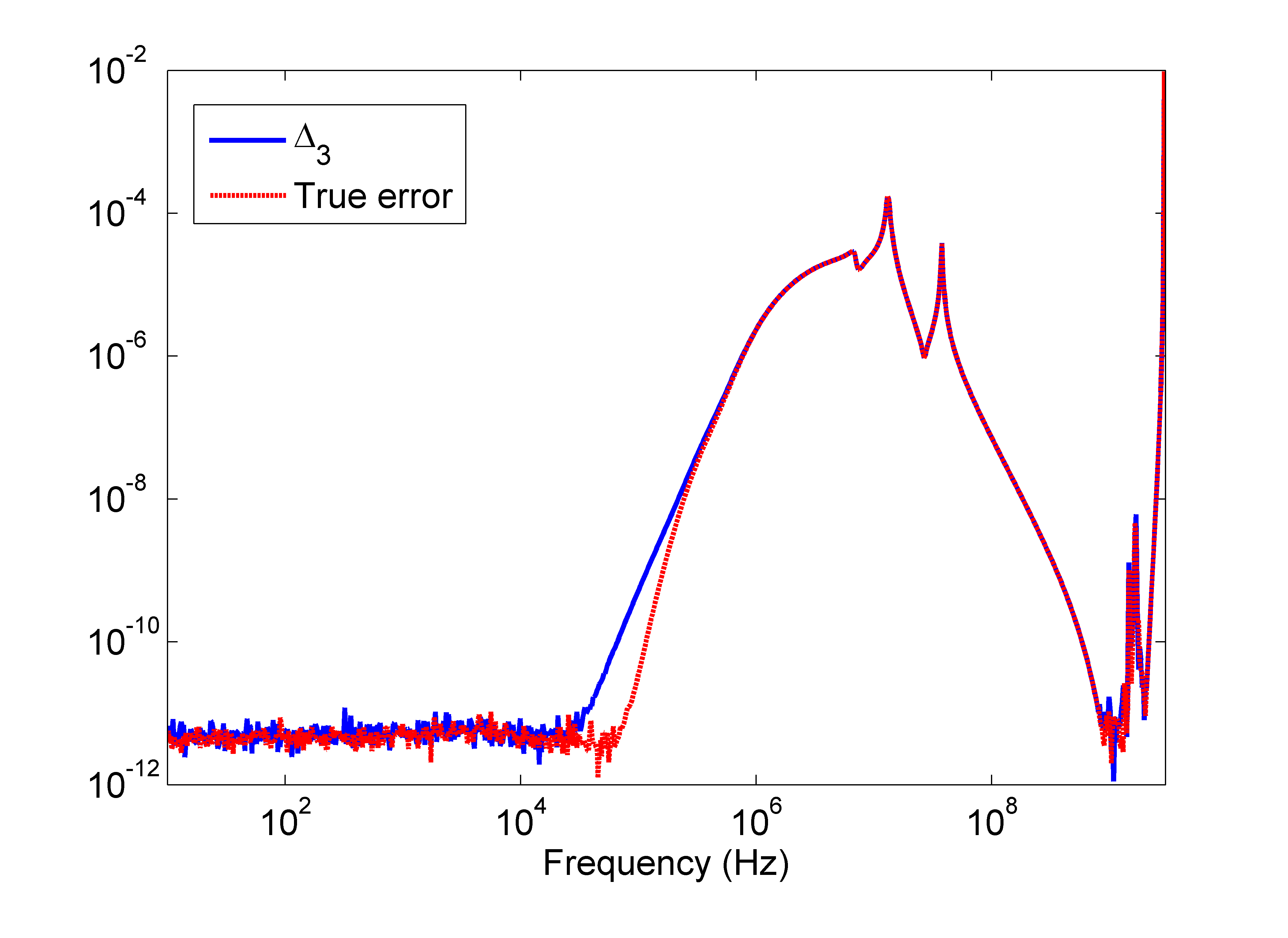}\quad
\includegraphics[width=75mm]{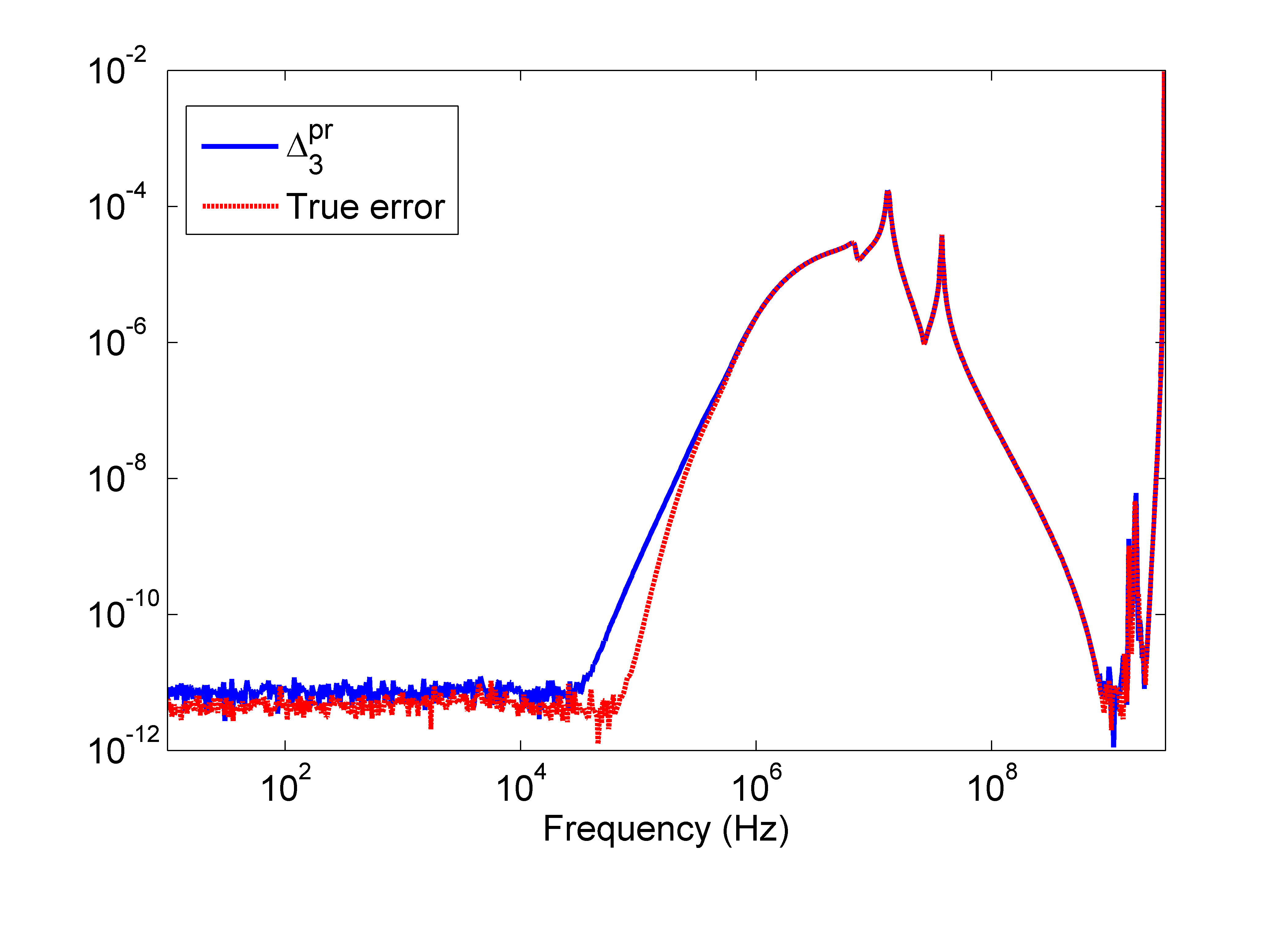}
\caption{MIMO example: $\Delta_3(s)$ and $\Delta_3^{pr}(s)$ vs. the respective true errors at 900 frequency samples.}
\label{fig:MNA4_est3pr}
\end{figure}
\subsection{Parametric example}

The micro-gyroscope model is a second-order parametric system with four parameters, 
\begin{equation*}
%\label{gyroscope}
\begin{array}{rcl}
M(\mu)\ddot{x}(\mu,t)+D(\mu)\dot{x}(\mu,t)+T(\mu)x(\mu,t)&=&Bu(t),\\
y(\mu,t)&=&Cx(\mu,t).
\end{array}
\end{equation*}
Here, $\mu=(\theta, \alpha, \beta, d)$, $M(\mu)=(M_1+dM_2)$, $T(\mu)=(T_1+\frac{1}{d}T_2+dT_3)$, $D(\mu)=\theta(D_1+dD_2)+\alpha M(d)+\beta T(d) \in \mathbb R^{n\times n}$, $n=17,913$. The parameters are $d, \theta, \alpha, \beta$. $d \in [100\%,200\%]$, the width of the bearing, taken as the percentage of the base value, and $\theta \in [10^{-7}, 10^{-5}]$MHz, the rotation velocity along the x-axis. $\alpha$, $\beta$ define to the proportional damping~\cite{morSalEL06}.

After Laplace transform, the system in frequency domain is
\begin{equation*}
%\label{freqgyro}
\begin{array}{rcl}
s^2M(\mu)x(\mu, s)+sD(\mu)x(\mu, s)+T(\mu)x&=&Bu_{\mathcal L}(s),\\
y(\mu, s)&=&Cx(\mu, s).
\end{array}
\end{equation*}
The above system can be rewritten into the affine form,
\begin{equation*}
\label{freqgyro}
\begin{array}{rcl}
Q(\tilde \mu)x(\tilde \mu)&=&Bu_{\mathcal L}(\tilde \mu),\\
y(\tilde \mu)&=&Cx(\tilde \mu),
\end{array}
\end{equation*}
where $Q(\tilde \mu)=T_1+\tilde \mu_1M_1+\tilde\mu_2M_2+\tilde\mu_3D_1+\tilde\mu_4D_2+\tilde\mu_5M_1+
\tilde\mu_6M_2+\tilde\mu_7T_1+\tilde\mu_8T_2+\tilde\mu_9T_3+\tilde\mu_{10}T_2+\tilde\mu_{11}T_3$. 
Here $\tilde\mu=(\tilde\mu_1,\ldots,\tilde\mu_{11})^T$ includes the newly generated parameters, $\tilde\mu_1=s^2$, $\tilde\mu_2=s^2d$, $\tilde\mu_3=s\theta$, $\tilde\mu_4=s\theta d$, $\tilde\mu_5=s\alpha$, $\tilde\mu_6=s\alpha d$, $\tilde\mu_7=s\beta$, $\tilde\mu_8={s}\beta/{d}$, $\tilde\mu_9=s\beta d$, $\tilde\mu_{10}=1/d$, $\tilde\mu_{11}=d$. 

For this example, we use 75 random samples (3 for $\theta$, 5 for $s$, 5 for $d$) to set up the training set $\Xi$ with $\beta=0$ and $\alpha=0$.
Afterwards, the ROMs are validated at a validation set $\Xi_{ver}$ including 2500 samples (5 for $\theta$, 10 for $s$, 5 for $d$), with $\beta=10^{-9}$ and $\alpha=0.1$ being nonzero. 

The results of Algorithm~\ref{alg:greedy_para} using different error estimators are listed in Tables~\ref{error_Gyro_est1pr}-\ref{error_Gyro_est3pr}. Except for $\Delta_1(\tilde \mu)$, all the other error estimators tightly estimate the true error at each iteration of the algorithm. The ROMs obtained via the error estimators are further validated at samples in $\Xi_{ver}$, and the effectivity of each is presented in Table~\ref{tab:Gyro_eff}. Again, $\Delta_1(\tilde \mu)$ is the worst. The others perform similarly well. We plot the true error of the ROMs and the corresponding error estimators in Figures~\ref{fig:Gyro_est1pr}-\ref{fig:Gyro_est3pr}. $\Delta_1(\tilde \mu)$ almost always underestimates the true error, while 
$\Delta_1^{pr}(\tilde \mu)$, $\Delta_3(\tilde \mu)$ and $\Delta_3^{pr}(\tilde \mu)$ are almost 
indistinguishable from the true error. 
\begin{table}[h]
\begin{center}
\caption{Gyroscope, $\varepsilon_{tol}=10^{-3}$, $q=3$,  $r=84 (\Delta_1)$, $r=94 (\Delta_1^{pr})$.}
\label{error_Gyro_est1pr}
\begin{tabular}{|c||c|c||c|c||} \hline
iteration $i$ & $\varepsilon_{\max} (\Delta_1)$   &  $\Delta_1(\tilde \mu^i)$ & $\varepsilon_{\max} (\Delta_1^{pr})$ &   $\Delta_1^{pr}(\tilde \mu^i$)   \\ \hline    
1       & 0.028    &    $0.04 $  &  0.028&0.025   \\ \hline  
2      &  0.006 &    0.007&   0.001& $0.006$\\  \hline
3      & 0.004 &    $3.2\times 10^{-4}$&0.003 & 0.003\\  \hline
4     &  $4\times 10^{-5}$&    $5.18\times 10^{-4}$& $3.85\times 10^{-4}$ & $3.78 \times 10^{-4}$\\  \hline
5      &  $3.34 \times 10^{-6}$&    $2.99 \times 10^{-5}$& $1.69 \times 10^{-6}$ & $1.69 \times 10^{-6}$\\  \hline
6     & $2.95 \times 10^{-7}$ &    $3.88 \times 10^{-7}$& $ 3.48 \times 10^{-7}$ & $ 3.47 \times 10^{-7}$\\  \hline
7      & $7.91 \times 10^{-8}$&    $8.03 \times 10^{-8}$&$1.39 \times 10^{-7}$ & $1.45 \times 10^{-7}$\\  \hline
8      & ---&   --- &    $8.49 \times 10^{-8}$  & $8.44 \times 10^{-8}$\\  \hline
  \end{tabular}
\end{center}
\end{table}
\begin{table}[h]
\begin{center}
\caption{Gyroscope, $\varepsilon_{tol}=10^{-3}$, $q=3$,  $r=86 (\Delta_2)$, $r=80 (\Delta_2^{pr})$.}
\label{error_Gyro_est2pr}
\begin{tabular}{|c||c|c||c|c||} \hline
iteration $i$ & $\varepsilon_{\max} (\Delta_2)$   &  $\Delta_2(\tilde \mu^i)$ & $\varepsilon_{\max} (\Delta_2^{pr})$ &   $\Delta_2^{pr}(\tilde \mu^i)$   \\ \hline    
1       & $4.53\times 10^{-4}$  &    $0.002 $  &  0.002&0.004   \\ \hline  
2      &  $4.15\times 10^{-4}$&    $6.16\times 10^{-4}$&   $4.14\times 10^{-4}$& $5.83\times 10^{-4}$\\  \hline
3      & $1.71\times 10^{-5}$ &   $8.53\times 10^{-5}$& $1.61\times 10^{-4}$ & $2.69\times 10^{-4}$\\  \hline
4     &  $8.77\times 10^{-6}$&    $8.22\times 10^{-6}$& $9.7\times 10^{-5}$ & $1.57 \times 10^{-4}$\\  \hline
5      &  $1.44 \times 10^{-6}$&    $1.07 \times 10^{-6}$& $9.80 \times 10^{-7}$ & $9.81 \times 10^{-7}$\\  \hline
6     & $3.09 \times 10^{-8}$ &    $3.41 \times 10^{-8}$& $ 1.89 \times 10^{-7}$ & $ 2.06 \times 10^{-7}$\\  \hline
7      & ---&   ---&$7.21 \times 10^{-8}$ & $8.14 \times 10^{-8}$\\  \hline
  \end{tabular}
\end{center}
\end{table}
\begin{table}[h]
\begin{center}
\caption{Gyroscope, $\varepsilon_{tol}=10^{-3}$, $q=3$,  $r=73 (\Delta_3)$, $r=83 (\Delta_3^{pr})$.}
\label{error_Gyro_est3pr}
\begin{tabular}{|c||c|c||c|c||} \hline
iteration $i$ & $\varepsilon_{\max} (\Delta_3)$   &  $\Delta_3(\tilde \mu^i)$ & $\varepsilon_{\max} (\Delta_3^{pr})$ &   $\Delta_3^{pr}(\tilde \mu^i)$   \\ \hline    
1       & 0.009    &    $0.005$  &  $5.42\times 10^{-4}$&0.002   \\ \hline  
2      &  0.009 &    0.005&   $5.60\times 10^{-4}$& $5.26\times 10^{-4}$\\  \hline
3      & $8.85\times 10^{-5}$&    $8.85\times 10^{-5}$&$9.35\times 10^{-5}$ &$6.59\times 10^{-4}$\\  \hline
4     &  $2.20\times 10^{-4}$&    $2.20\times 10^{-4}$& $5.36\times 10^{-6}$ & $5.36 \times 10^{-6}$\\  \hline
5      &  $1.78 \times 10^{-6}$&    $1.48 \times 10^{-6}$& $1.31 \times 10^{-6}$ & $1.30 \times 10^{-6}$\\  \hline
6     & $8.56 \times 10^{-8}$ &    $8.51 \times 10^{-8}$& $ 5.78 \times 10^{-7}$ & $ 5.78 \times 10^{-7}$\\  \hline
7      &---&   ---&$5.60\times 10^{-8}$& $5.59 \times 10^{-8}$\\  \hline
  \end{tabular}
\end{center}
\end{table}
\begin{table}[h]
\begin{center}
\caption{Gyroscope, effectivity of the error estimators.}
\label{tab:Gyro_eff}
\begin{tabular}{|c||c|c||c|c|} \hline
\multirow{2}{*}{Estimator}  & \multicolumn{2}{|c||}{For all $\varepsilon(s)$} &  \multicolumn{2}{|c|}{For $\varepsilon(s) \geq 10^{-11}$} \\ \cline{2-5}  
& $\min\limits_{s\in \Xi_{ver}}(\textrm{eff})$   &    $\max\limits_{s\in \Xi_{ver}}(\textrm{eff})$ & $\min\limits_{s\in \Xi_{ver}}(\textrm{eff})$   & $\max\limits_{s\in \Xi_{ver}}(\textrm{eff})$  \\ \hline 
$\Delta_1$      & 0.025   &    8.87  &  0.025&8.87\\ \hline  
$\Delta_1^{pr}$      & 0.2  &   3.65&0.2 &3.65 \\ \hline
$\Delta_2$       &  $0.38$   &    15 &0.38 &15\\  \hline
$\Delta_2^{pr}$         & $0.2$   & 3.68  &0.2& 3.68 \\ \hline 
$\Delta_3$       &  $0.34$   &     9.34 &0.34 &9.34\\  \hline
$\Delta_3^{pr}$   & 0.5   & 2 &0.5 &2\\ \hline 
  \end{tabular}
\end{center}
\end{table}
\begin{figure}[h]
\centering
\includegraphics[width=75mm]{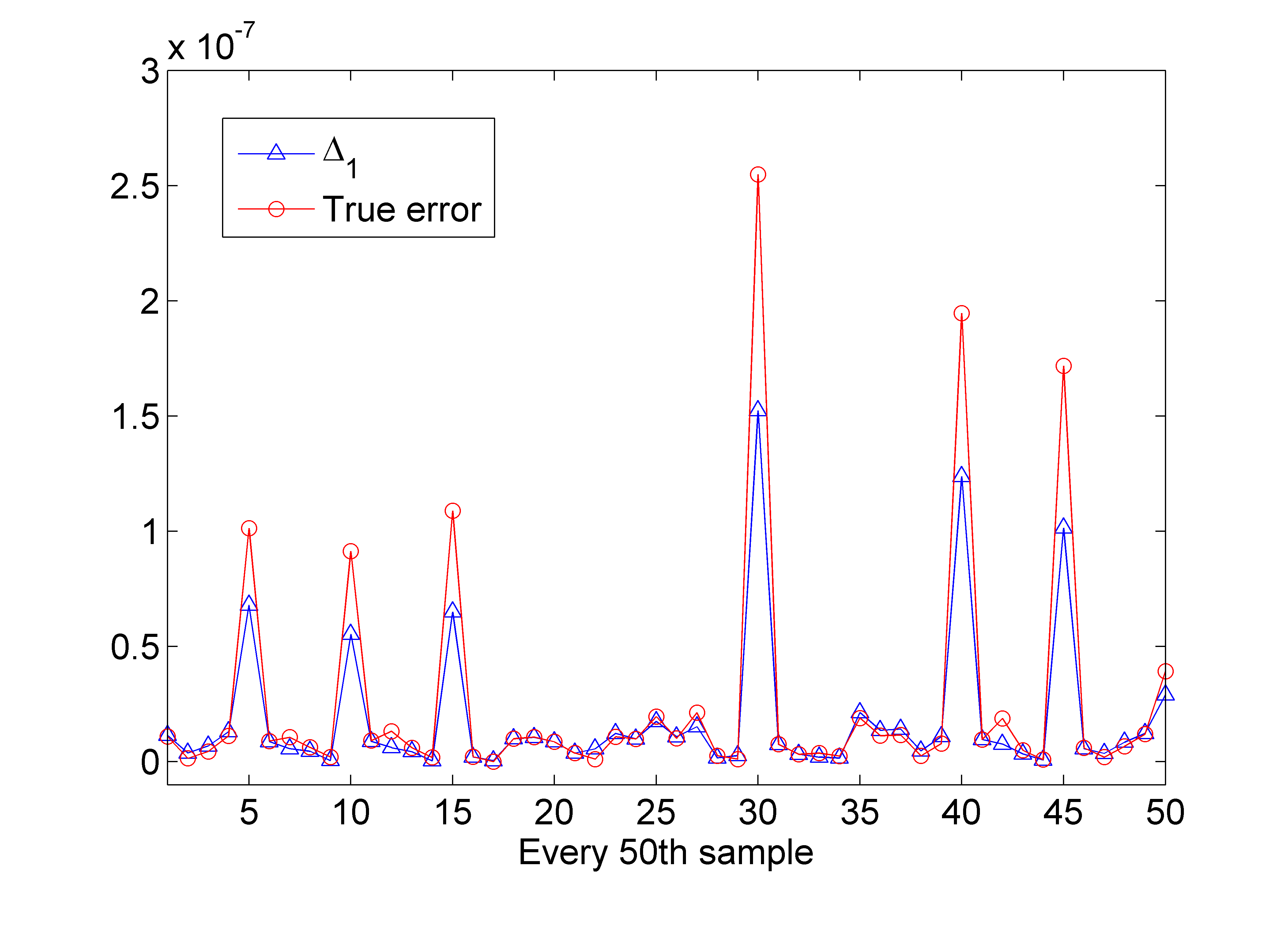}\quad
\includegraphics[width=75mm]{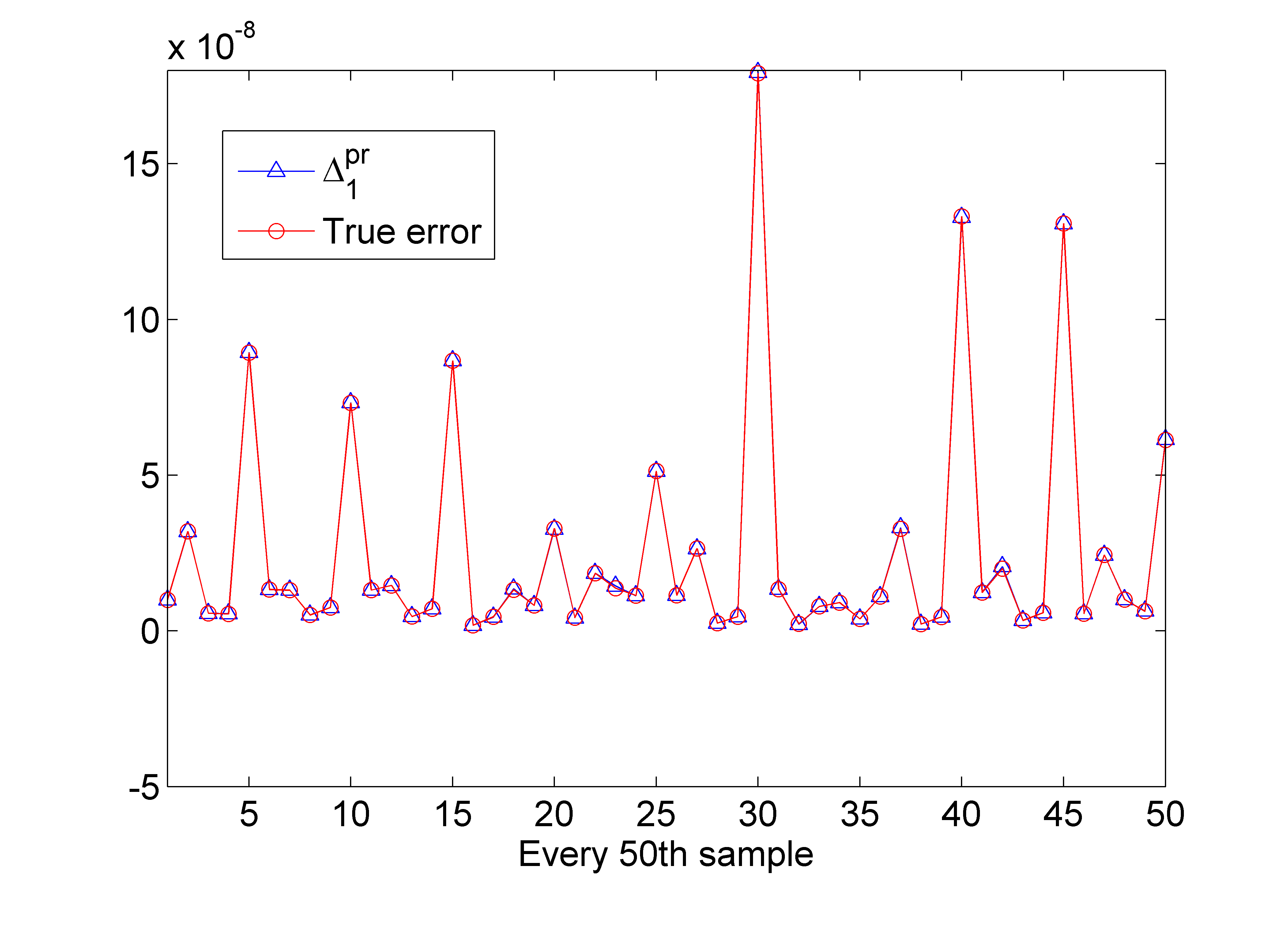} 
\caption{Gyroscope: $\Delta_1(\tilde \mu)$ and $\Delta_1^{pr}(\tilde \mu)$ vs. the respective true errors at 2500 parameter samples.}
\label{fig:Gyro_est1pr}
\end{figure}
\begin{figure}[h]
\centering
\includegraphics[width=75mm]{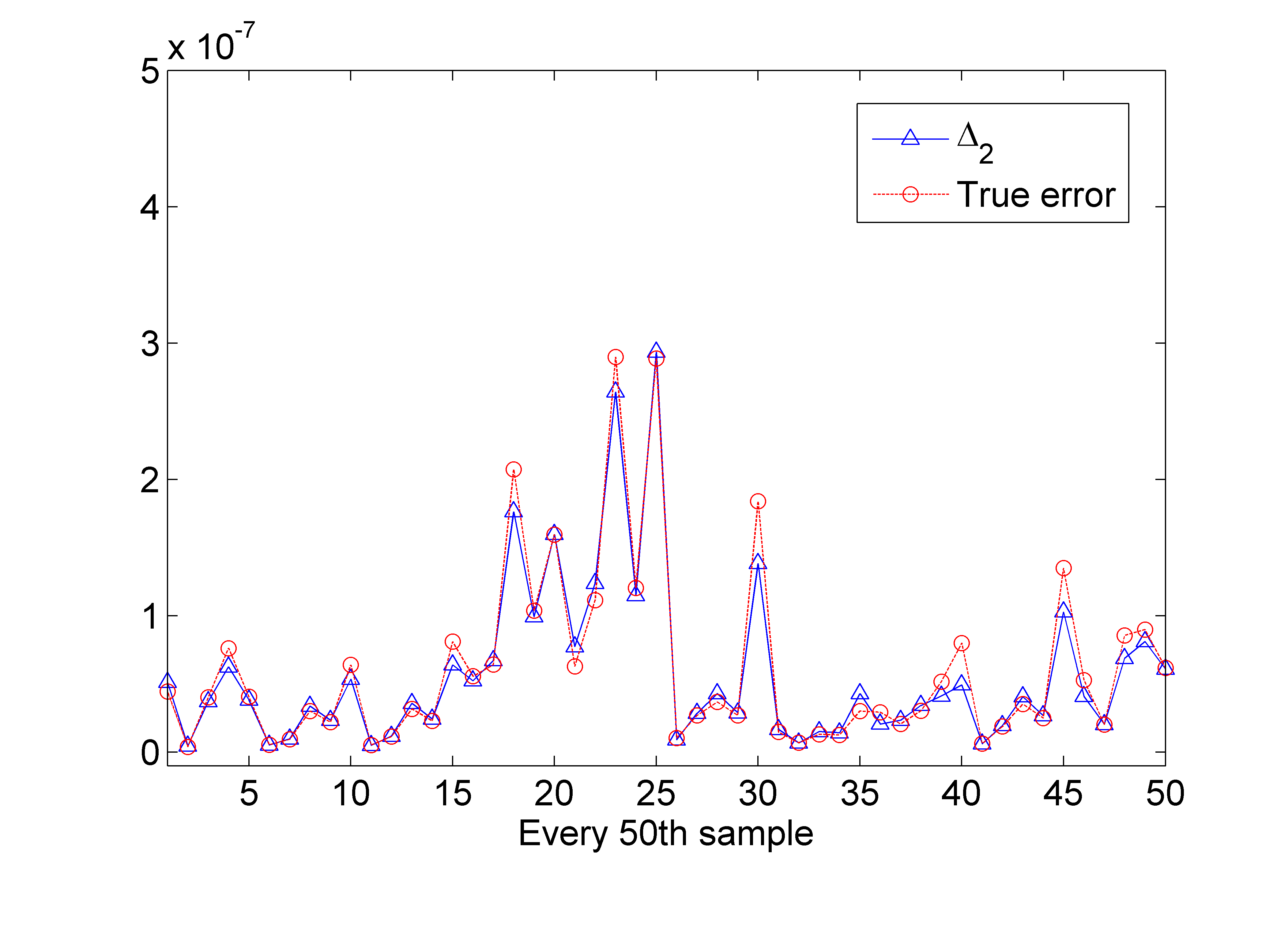}\quad
\includegraphics[width=75mm]{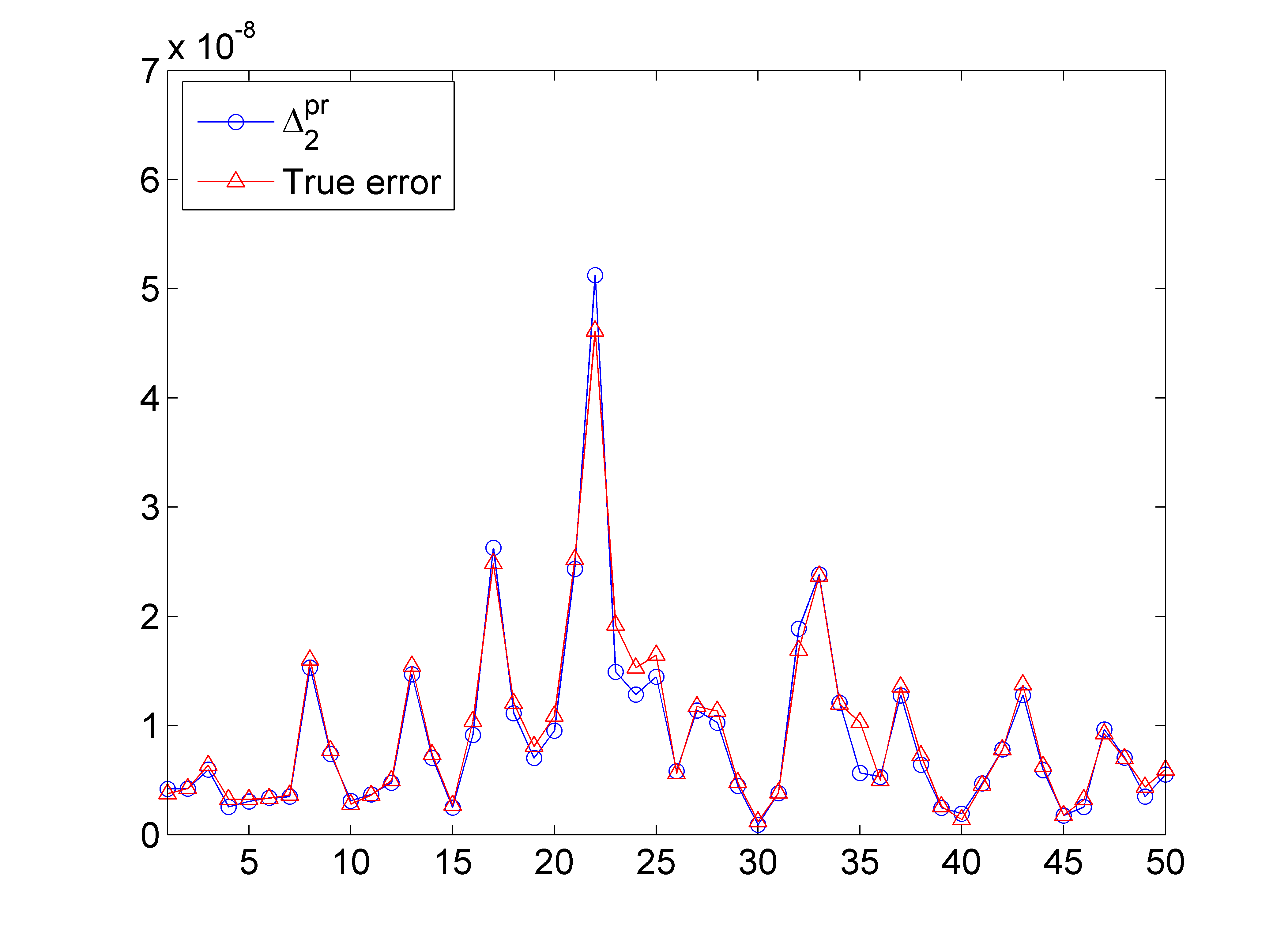}
\caption{Gyroscope: $\Delta_2(\tilde \mu)$ and $\Delta_2^{pr}(\tilde \mu)$ vs. the respective true errors at 2500 parameter samples.}
\label{fig:Gyro_est2pr}
\end{figure}
\begin{figure}[h]
\centering
\includegraphics[width=75mm]{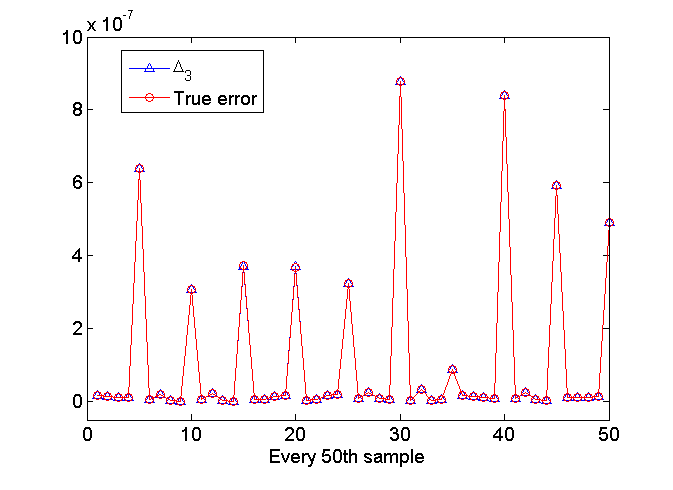}\quad
\includegraphics[width=75mm]{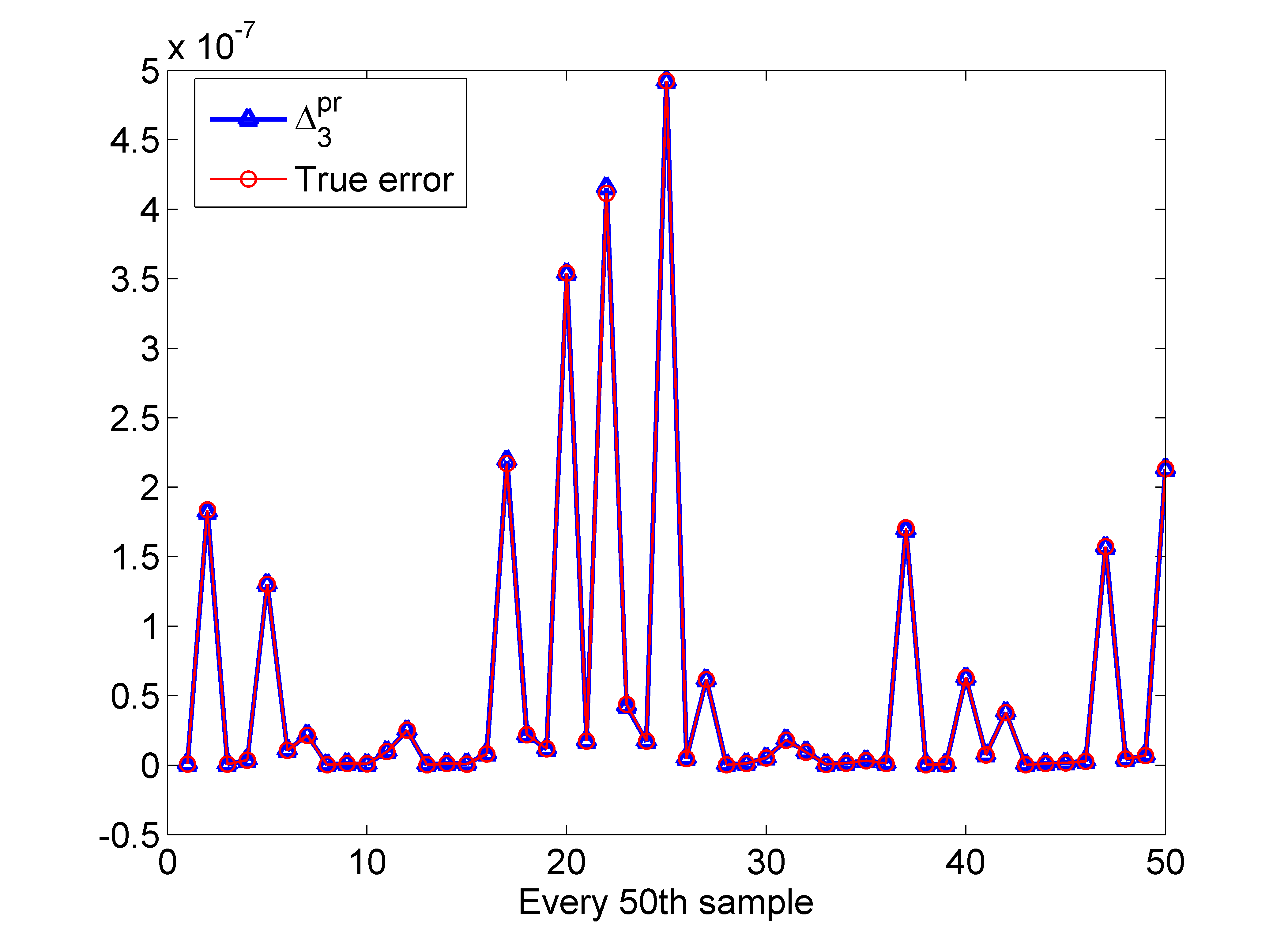}
\caption{Gyroscope: $\Delta_3(\tilde \mu)$ and $\Delta_3^{pr}(\tilde \mu)$ vs. the respective true errors at 2500 parameter samples.}
\label{fig:Gyro_est3pr}
\end{figure}
\subsection{Performances of $\Delta_1(\tilde \mu)$, $\Delta_2(\tilde \mu)$ and $\Delta_2^{pr}(\tilde \mu)$ using Algorithms~\ref{alg:greedy_nonpara_im}-\ref{alg:greedy_para_im}}

In this subsection, we show the results of Algorithms~\ref{alg:greedy_nonpara_im}-\ref{alg:greedy_para_im} for symmetric systems, where the expansion points for $V_{du}$ are selected differently from those for $V$. The results are listed in Tables~\ref{tab:RLC_eff_im}-\ref{tab:Gyro_eff_im}. 
\begin{table}[h]
\begin{center}
\caption{Algorithm~\ref{alg:greedy_nonpara_im}: RLCtree, effectivity of the error estimators.}
\label{tab:RLC_eff_im}
\begin{tabular}{|c||c|c||c|c|} \hline
\multirow{2}{*}{Estimator}  & \multicolumn{2}{|c||}{For all $\varepsilon(s)$} &  \multicolumn{2}{|c|}{For $\varepsilon(s) \geq 10^{-11}$} \\ \cline{2-5}  
& $\min\limits_{s\in \Xi_{ver}}(\textrm{eff})$   &    $\max\limits_{s\in \Xi_{ver}}(\textrm{eff})$ & $\min\limits_{s\in \Xi_{ver}}(\textrm{eff})$   & $\max\limits_{s\in \Xi_{ver}}(\textrm{eff})$  \\ \hline 
$\Delta_1$      & $3.4488 \times 10^{-4}$   &    $38$  & 0.05&6.5\\ \hline  
$\Delta_2$       &  $0.01$   &     $25$ &0.7 &25\\  \hline
$\Delta_2^{pr}$         & $0.004$   &    $244$&1 &25  \\ \hline 
  \end{tabular}
\end{center}
\end{table}
\begin{table}[h]
\begin{center}
\caption{Algorithm~\ref{alg:greedy_nonpara_im}: MIMO example, effectivity of the error estimators.}
\label{tab:MIMO_eff_im}
\begin{tabular}{|c||c|c||c|c|} \hline
\multirow{2}{*}{Estimator}  & \multicolumn{2}{|c||}{For all $\varepsilon(s)$} &  \multicolumn{2}{|c|}{For $\varepsilon(s) \geq 10^{-11}$} \\ \cline{2-5}  
& $\min\limits_{s\in \Xi_{ver}}(\textrm{eff})$   &    $\max\limits_{s\in \Xi_{ver}}(\textrm{eff})$ & $\min\limits_{s\in \Xi_{ver}}(\textrm{eff})$   & $\max\limits_{s\in \Xi_{ver}}(\textrm{eff})$  \\ \hline 
$\Delta_1$      & 0.14   &    46  &  0.14&46\\ \hline  
$\Delta_2$       &  $0.2$   &    15 &0.1 &9\\  \hline
$\Delta_2^{pr}$         & $0.32$   & 164  &0.32& 75\\ \hline 
  \end{tabular}
\end{center}
\end{table}
\begin{table}[h]
\begin{center}
\caption{Algorithm~\ref{alg:greedy_para_im}: Gyroscope, effectivity of the error estimators.}
\label{tab:Gyro_eff_im}
\begin{tabular}{|c||c|c||c|c|} \hline
\multirow{2}{*}{Estimator}  & \multicolumn{2}{|c||}{For all $\varepsilon(s)$} &  \multicolumn{2}{|c|}{For $\varepsilon(s) \geq 10^{-11}$} \\ \cline{2-5}  
& $\min\limits_{s\in \Xi_{ver}}(\textrm{eff})$   &    $\max\limits_{s\in \Xi_{ver}}(\textrm{eff})$ & $\min\limits_{s\in \Xi_{ver}}(\textrm{eff})$   & $\max\limits_{s\in \Xi_{ver}}(\textrm{eff})$  \\ \hline 
$\Delta_1$      & 0.096   &    28 &  0.096&28\\ \hline  
$\Delta_2$       &  $0.35$   &    11 &0.35 &11\\  \hline
$\Delta_2^{pr}$         & $0.22$   & 3.68  &0.22& 3.68 \\ \hline 
 \end{tabular}
\end{center}
\end{table}
Comparing Tables~\ref{tab:RLC_eff_im},~\ref{tab:MIMO_eff_im},~\ref{tab:Gyro_eff_im} with
Tables~\ref{tab:RLC_eff},~\ref{tab:MIMO_eff},~\ref{tab:Gyro_eff}, respectively, we see that the performance of $\Delta_1(\tilde \mu)$ is improved in general, those of $\Delta_2(\tilde \mu)$, and $\Delta_2^{pr}(\tilde \mu)$ are only partially improved. The performance of $\Delta_2(\tilde \mu))$ is improved, especially for the RLC tree example. However, the performance of $\Delta_2^{pr}(\tilde \mu)$ does not become uniformly better, especially for the MIMO example. Although $\Delta_1(s)$ behaves better when using Algorithm~\ref{alg:greedy_nonpara_im} and~\ref{alg:greedy_para_im}, it is still worse than its upper bound $\Delta_2(\tilde \mu)$ or $\Delta_2^{pr}(\tilde \mu)$.

\clearpage

\section{Conclusions}
We propose some a posteriori error estimators for the transfer function error of ROMs that are obtained by any (Petrov-)Galerkin-type MOR method.  
Detailed simulation comparison demonstrates the performance of each. It is clear that either $\Delta_r(\tilde \mu)$ or $\Delta_1(\tilde \mu)$ is not a good error estimator for all the examples and therefore is not recommended as a reliable error estimator. All others 
perform similarly, especially the primal version of $\Delta_1(\tilde \mu)$: $\Delta_1^{pr}(\tilde \mu)$ behaves unexpectedly well and is almost as good as its bounds $\Delta_3(\tilde \mu)$ and $\Delta_3^{pr}(\tilde \mu)$ for all the examples. Among the robust error estimators $\Delta_2(\tilde \mu)$, $\Delta_2^{pr}(\tilde \mu)$, $\Delta_1^{pr}(\tilde \mu)$, $\Delta_3(\tilde \mu)$ and $\Delta_3^{pr}(\tilde \mu)$, the estimator $\Delta_1^{pr}$ needs the least computational cost, since only two ROMs (constructed by $V, V_{r_{pr}}$) need to be computed. For nearly symmetric systems, $\Delta_2(\tilde \mu)$ and its variant $\Delta_2^{pr}(\tilde \mu)$ are not really improved for all the examples when choosing different expansion points for $V$ and $V_{du}$, i.e., when using Algorithms~\ref{alg:greedy_nonpara_im} and~\ref{alg:greedy_para_im}. As future work, more theoretical analysis and numerical simulations might be explored to further explain the numerical behaviors of the proposed error estimators. 

 \section*{Acknowledgment}
Part of this material is based upon work supported by the National
  Science Foundation under Grant No. DMS-1439786 and by the Simons
  Foundation Grant No. 50736 while Feng and Benner were in residence at the
  Institute for Computational and Experimental Research in Mathematics
  in Providence, RI, during the "Model and
  dimension reduction in uncertain and dynamic systems" program.

\bibliographystyle{abbrv}
\bibliography{mor,refs}

\begin{thebibliography}{10}

\bibitem{morBauBF14}
U.~Baur, P.~Benner, and L.~Feng.
\newblock Model order reduction for linear and nonlinear systems: A
  system-theoretic perspective.
\newblock {\em Arch. Comput. Methods Eng.}, 21(4):331--358, 2014.

\bibitem{morBauBHetal17}
U.~Baur, P.~Benner, B.~Haasdonk, C.~Himpe, I.~Martini, and M.~Ohlberger.
\newblock Comparison of methods for parametric model order reduction of
  time-dependent problems.
\newblock In P.~Benner, A.~Cohen, M.~Ohlberger, and K.~Willcox, editors, {\em
  Model Reduction and Approximation: Theory and Algorithms}, pages 377--407.
  SIAM, 2017.

\bibitem{morBenGW15}
P.~Benner, S.~Gugercin, and K.~Willcox.
\newblock A survey of model reduction methods for parametric systems.
\newblock {\em SIAM Review}, 57(4):483--531, 2015.

\bibitem{morFenAB17}
L.~Feng, A.~C. Antoulas, and P.~Benner.
\newblock Some a posteriori error bounds for reduced order modelling of
  (non-)parametrized linear systems.
\newblock {\em ESAIM: M2AN}, 51:2127--2158, 2017.

\bibitem{morFenB14}
L.~Feng and P.~Benner.
\newblock A robust algorithm for parametric model order reduction based on
  implicit moment matching.
\newblock In A.~Quarteroni and G.~Rozza, editors, {\em Reduced Order Methods
  for Modeling and Computational Reduction, MS\&A Series}, volume~9, chapter~6,
  pages 159--186. Springer-Verlag, Berlin, Heidelberg, New York, 2014.

\bibitem{morFenB19b}
L.~Feng and P.~Benner.
\newblock A new error estimator for reduce-order modeling of linear parametric
  systems.
\newblock {\em {IEEE} Trans. Microw. Theory Techn.}, 67(12):4848--4859, 2019.

\bibitem{morHaiOetal18}
S.~Hain, M.~Ohlberger, M.~Radic, and K.~Urban.
\newblock A hierarchical a-posteriori error estimator for the reduced basis
  method.
\newblock arXiv e-prints 1802.03298, Cornell University, 2018.
\newblock math.NA.

\bibitem{morHesGB15}
M.~W. Hess, S.~Grundel, and P.~Benner.
\newblock Estimating the inf-sup constant in reduced basis methods for
  time-harmonic {M}axwell’s equations.
\newblock {\em {IEEE} Trans. Microw. Theory Techn.}, 63(11):3549--3557, 2015.

\bibitem{morHesGB16}
J.~S. Hesthaven, G.~Rozza, and B.~Stamm.
\newblock {\em Certified {R}educed {B}asis {M}ethods for {P}arametrized
  {P}artial {D}ifferential {E}quations}.
\newblock SpringerBriefs in Mathematics. Springer, Cham, 2016.

\bibitem{morHuyetal07}
D.~Huynh, G.~Rozza, S.~Sen, and A.~Patera.
\newblock A successive constraint linear optimization method for lower bounds
  of parametric coercivity and inf-sup lower bounds.
\newblock {\em C.R. Acad. Sci. Paris}, 345(8):473 -- 478, 2007.

\bibitem{morRozHP08}
G.~Rozza, D.~B.~P. Huynh, and A.~T. Patera.
\newblock Reduced basis approximation and a posteriori error estimation for
  affinely parametrized elliptic coercive partial differential equations.
\newblock {\em Arch. Comput. Methods Eng.}, 15(3):229--275, 2008.

\bibitem{morSalEL06}
B.~Salimbahrami, R.~Eid, and B.~Lohmann.
\newblock Model reduction by second order {K}rylov subspaces: extensions,
  stability and proportional damping.
\newblock In {\em IEEE Conference on Computer Aided Control Systems Design},
  pages 2997--3002. Springer International Publishing, 2006.

\bibitem{morSchWH18}
A.~Schmidt, D.~Wittwar, and B.~Haasdonk.
\newblock Rigorous and effective a-posteriori error bounds for nonlinear
  problems-application to rb methods.
\newblock preprint, SimTech, University of Stuttgart, 2018.

\bibitem{morSemZP18}
K.~Smetana, O.~Zahm, and A.~T. Patera.
\newblock Randomized residual-based error estimators for parametrized
  equations.
\newblock arXiv e-prints 1807.10489, Cornell University, Apr. 2018.
\newblock math.NA.

\bibitem{morYan14}
M.~Yano.
\newblock A space-time {P}etrov-{G}alerkin certified reduced basis method:
  Application to the {B}oussinesq equations.
\newblock {\em SIAM J. Sci. Comput.}, 36(1):232--266, 2014.

\bibitem{morZhaFLetal15}
Y.~Zhang, L.~Feng, S.~Li, and P.~Benner.
\newblock An efficient output error estimation for model order reduction of
  parametrized evolution equations.
\newblock {\em {SIAM} J. Sci. Comput.}, 37(6):B910--B936, 2015.

\end{thebibliography}

\end{document}